\documentclass[reqno]{amsart}
\usepackage{amssymb,amsmath,amsthm}
\usepackage{thmtools}
\usepackage{graphicx} 
\usepackage{braket}
\usepackage{tikz,tikz-cd}
\usepackage{xparse}
\usepackage[maxbibnames=5,eprint=false,isbn=false,doi=false]{biblatex}
\usepackage{enumerate}
\usepackage[colorlinks=true,allcolors=blue,hypertexnames=false]{hyperref}
\usepackage[capitalise]{cleveref}

\newcommand\norm[1]{\left\lVert#1\right\rVert}

\newtheorem*{lemma*}{Lemma}
\newtheorem*{thm*}{Theorem}
\newtheorem*{cor*}{Corollary}
\newtheorem*{prop*}{Proposition}
\newtheorem*{claim*}{Claim}

\newtheorem{thm}{Theorem}[subsection]

\newtheorem{lemma}[thm]{Lemma}
\newtheorem{cor}[thm]{Corollary}
\newtheorem{prop}[thm]{Proposition}

\theoremstyle{definition}
\newtheorem*{notation*}{Notation}
\newtheorem*{defn*}{Definition}
\newtheorem*{case*}{Case}
\newtheorem*{subcase*}{Subcase}
\newtheorem*{step*}{Step}
\newtheorem*{rem*}{Remark}
\newtheorem*{const*}{Construction}
\newtheorem*{summary*}{Summary}
\newtheorem*{subclaim*}{Sub-claim}
\newtheorem*{ex*}{Example}
\newtheorem*{question*}{Question}

\newtheorem{defn}[thm]{Definition}

\newtheorem{rem}[thm]{Remark}
\newtheorem{const}[thm]{Construction}

\newtheorem*{assumption*}{Assumption}

\newcommand{\mycomment}[1]{}

\newcommand{\C}{\mathbb{C}}
\newcommand{\R}{\mathbb{R}}
\newcommand{\Z}{\mathbb{Z}}
\newcommand{\N}{\mathbb{N}}

\newcommand{\bK}{\mathcal{K}} 
\newcommand{\cB}{\mathcal{B}} 
\newcommand{\cC}{\mathcal{C}} 
\newcommand{\cG}{\mathcal{G}} 
\newcommand{\wcG}{\widetilde{\mathcal{G}}} 
\newcommand{\wcU}{\widetilde{U}}
\newcommand{\wcF}{\widetilde{F}}
\newcommand{\bI}{\mathbb{I}}  
\newcommand{\Top}{\mathcal{U}} 
\newcommand{\TCat}{\mathcal{U}\text{\normalfont Cat}} 
\newcommand{\TCatmon}{\mathcal{U}\text{\normalfont Cat}_{\ostar}} 
\newcommand{\Mod}{\mathcal{M}\text{\normalfont od}}
\newcommand{\Cuntz}[1]{\mathcal{O}_{#1}}
\newcommand{\CrossMod}[3]{
\begin{tikzcd}[ampersand replacement=\&,column sep=1.3cm]
    #2 \ar[r,"#3"] \& #1 \circlearrowleft #2
\end{tikzcd}
}
\newcommand{\ostar}{\circledast}

\newcommand{\mor}[1]{\text{\normalfont mor}(#1)}
\newcommand{\ob}[1]{\text{\normalfont ob}(#1)}
\newcommand{\op}{\text{\normalfont op}}
\newcommand{\Func}[2]{\text{\normalfont Func}(#1,#2)}

\DeclareMathOperator{\Aut}{Aut}
\DeclareMathOperator{\Inn}{Inn}
\DeclareMathOperator{\Out}{Out}
\DeclareMathOperator{\Ad}{Ad}
\DeclareMathOperator*{\hocolim}{hocolim}
\DeclareMathOperator{\id}{id}
\DeclareMathOperator{\spec}{spec}

\title[Infinite loop spaces and group actions on SSA $C^*$-algebras]{Infinite loop spaces and group actions \\ on strongly self-absorbing $C^*$-algebras}

\author{Ulrich Pennig}
\address{Ulrich Pennig, School of Mathematics, Cardiff University, Cardiff, CF24 4AG, Wales, UK}
\email{pennigu@cardiff.ac.uk}

\author{George Tridimas}
\address{George Tridimas, School of Mathematics, Cardiff University, Cardiff, CF24 4AG, Wales, UK and Heilbronn Institute for Mathematical Research, Bristol, UK}
\email{tridimasg@cardiff.ac.uk}

\thanks{This work was supported by the Additional Funding Programme for Mathematical Sciences, delivered by EPSRC (EP/V521917/1) and the Heilbronn Institute for Mathematical Research}

\begin{document}

\begin{abstract}
    Lifting obstructions for group actions, cocycle actions, and $\Gamma$-kernels admit a cohomological description via topological crossed modules, as recently developed by Izumi, Girón-Pacheco, and the first named author. For a strongly self-absorbing $C^*$-algebra $A$, we show that the classifying spaces $\cB^D\cG_A$ and $\cB^DP\cG_A$ of the respective crossed modules governing cocycle actions and $\Gamma$-kernels, respectively, carry infinite loop space structures induced by the tensor product; the same holds for the crossed module $B^D\wcG_A$ whenever $U(A)$ is connected. This confirms a conjecture from the aforementioned work and extends to $\Gamma$-kernels and cocycle actions the connection with stable homotopy theory. For the proof we construct $\bI$-FCPs from the relevant crossed modules, pass to $\Gamma$-spaces, and apply the May–Thomason infinite loop space machine. Consequently, the natural transformation $H^1(\Gamma,\cG) \to [\cB\Gamma,\cB^D\cG]$ takes values in cohomology groups.
\end{abstract}

\maketitle

\tableofcontents

\section*{Introduction}
A countable discrete group $\Gamma$ can act in various different ways on a unital $C^*$-algebra $A$: first, we have the natural notion of a group action given by a homomorphism $\beta \colon \Gamma \to \Aut(A)$. A $\Gamma$-kernel on the other hand is a homomorphism $\bar{\alpha} \colon \Gamma \to \Out(A)$, where $\Out(A) = \Aut(A)/\Inn(A)$ is the quotient by the group of unitarily implemented inner automorphisms. A cocycle action consists of two maps $\alpha \colon \Gamma \to \Aut(A)$ and $u \colon \Gamma \times \Gamma \to U(A)$, where $u$ measures the failure of $\alpha$ to be a homomorphism and satisfies an associativity constraint:
\begin{gather*}
    \alpha_g \circ \alpha_h = \Ad_{u_{g,h}}{} \circ \alpha_{gh}\ , \\
    \alpha_g(u_{h,k})u_{g,hk} = u_{g,h}u_{gh,k}\ .
\end{gather*}
Lifting a $\Gamma$-kernel to a cocycle action is not always possible and leads to an obstruction class in group cohomology living in $H^3(\Gamma, U(Z(A)))$. If $\Gamma$ is a countable discrete amenable group and $A = \mathcal{R}$ is the hyperfinite type $II_1$-factor, then this obstruction class is known to be a complete invariant for injective $\Gamma$-kernels up to conjugacy as proven by Connes \cite{Connes1975, Connes1977}, Jones \cite{Jones1978, Jones1980} and Ocneanu \cite{Ocneanu1985}. 

As the work of Evington and Gir\'on-Pacheco \cite{EvingtonPacheco2023} shows the situation for $C^*$-algebras is much more intricate even when we restrict to those algebras that could be considered $C^*$-algebraic analogues of the hyperfinite factors. For the Jiang-Su algebra~$\mathcal{Z}$, for example, the obstruction always vanishes \cite[Theorem~A]{EvingtonPacheco2023}. For UHF-algebras the order of the obstruction class is constrained by the supernatural number arising from the matrix dimensions \cite[Theorem~B]{EvingtonPacheco2023}. Nevertheless, classification results do exist in the setting of Kirchberg algebras. Let $\mathcal{OA}(\Gamma,A)$ be the set of outer actions of a torsion-free countable discrete amenable group $\Gamma$ on a stable Kirchberg algebra $A$. Combining the striking dynamical Kirchberg--Phillips theorem by Gabe and Szabo \cite[Theorem~6.2]{GabeSzabo2024} with Meyer's classification result \cite[Theorem~3.10]{Meyer2021} we obtain a bijection
\begin{equation} \label{eqn:OA-homotopyset}
    \mathcal{OA}(\Gamma, A)/\!\!\sim_{cc}\  \to\   [\cB\Gamma, \cB\!\Aut(A)] \ ,
\end{equation}
where the equivalence relation on the left hand side is cocycle conjugacy. The homotopy set that appears on the right involves the classifying spaces $\cB\Gamma$ and $\cB\!\Aut(A)$, where $\Aut(A)$ is considered as a topological group with the point-norm topology. This bijection was first conjectured in the work of Matui and Izumi on poly-$\Z$ group actions on Kirchberg algebras \cite{IzumiMatui2021-I,IzumiMatui2021-II}. 

Under mild assumptions on $X$ the homotopy set $[X,\cB\!\Aut(A)]$ is in bijection with locally trivial $C^*$-algebra bundles over $X$ with fibre $A$. So if $D$ is a strongly self-absorbing Kirchberg algebra and $A = D \otimes \bK$, then $(D \otimes \bK)^{\otimes 2} \cong D \otimes \bK$, which gives the right hand side of \eqref{eqn:OA-homotopyset} a semigroup structure arising from the fibrewise tensor product of $D \otimes \bK$-bundles. Using methods from stable homotopy theory, it was shown in \cite{Dadarlat_2015} that this is actually a group, in fact the first group of a generalised cohomology theory represented by a spectrum $gl_1(KU^D)$.

This raises the questions that motivated this paper, namely whether the connection to stable homotopy theory extends to $\Gamma$-kernels and cocycle actions in a natural way, and how the lifting obstructions manifest in this setting. For $\Gamma$-kernels on Kirchberg algebras an extensive analysis has been done in \cite{Izumi2024}. In \cite{pacheco2025gkernelscrossedmodules}, the first named author along with Izumi and Gir\'on-Pacheco developed a unified framework for various lifting obstructions for $\Gamma$-kernels in the general setting. Their picture is cohomological in nature. All known lifting obstructions for $\Gamma$-kernels and cocycle actions appear as boundary maps in exact sequences of cohomology sets. 

This is achieved by constructing a topological crossed module for each type of action. Since crossed modules can serve as coefficients for low-degree group cohomology (in a way that extends the classical definition) one obtains cohomology sets that, as shown in \cite{pacheco2025gkernelscrossedmodules}, are naturally isomorphic to the sets of respective $\Gamma$-actions on $A$ up to the right version of conjugacy. The table shown in \cref{fig:coh_w_coeff} summarises this relation.
\begin{figure}[htp]
    \centering
    \begin{tabular}{|c|c|c|}
         \hline
         \multicolumn{2}{|c|}{Crossed Module} & $H^1(\Gamma, \cG)$ \\[1mm]
         \hline
         & $1 \to \Aut(A)$ & $\{ \text{actions of $\Gamma$ on $A$} \}/\sim_c$ \\[1mm]  
         $\cG_A$ & $U(A) \to \Aut(A)$  & $\{\text{cocycle actions of  $\Gamma$ on $A$} \}/\sim_{\text{c.c}}$ \\[1mm]
         $P\cG_A$ & $PU(A) \to \Aut(A)$ & $\{\text{$\Gamma$-kernels on $A$} \}/\sim_{\text{c}}$  \\[1mm]
         $\wcG_A$ & $\wcU(A) \to \Aut(A)$ &  \\[1mm]
         \hline
    \end{tabular}
    \caption{Topological crossed modules and the interpretation of their associated cohomology sets.}
    \label{fig:coh_w_coeff}
\end{figure}

To extend the aforementioned connection with stable homotopy theory to $\Gamma$-kernels and cocycle actions we will employ tools from higher category theory. In particular, we will make use of the well-known correspondence between crossed modules and (strict) $2$-groups \cite[Sec.~3.3]{Noohi2007}, which already played a central role in~\cite{pacheco2025gkernelscrossedmodules}. Recall that a $2$-group (or categorical group) is a groupoid equipped with an additional functorial group structure, making it a group object in the category of groupoids. Since our starting point is topological crossed modules, the $2$-groups arising from them will carry a natural topology on their objects and morphisms; in other words, they are topological $2$-groups in the sense of \cite{BaezStevenson2009}.

Just as ordinary groups admit classifying spaces, so too do topological $2$-groups. Several natural models for these exist, two of which - denoted $\cB^D\cG$ and $\cB^\otimes\cG$ for a topological crossed module $\cG$ - will be recalled in \cref{def:classifying_space}. Since a cocycle representing a class in $H^1(\Gamma, \cG)$ can be viewed as a pseudofunctor, and classifying spaces are functorial with respect to these, we obtain a natural transformation 
\begin{equation} \label{eqn:nat_trafo}
    H^{1}(\Gamma, \cG) \to [\cB\Gamma, \cB^D\cG]\ .
\end{equation}
We refer to \cite[Lemma~4.4]{pacheco2025gkernelscrossedmodules} for details. Combining \cite[Theorem~4.15]{pacheco2025gkernelscrossedmodules} with one of the main results from \cite{Dadarlat_2015}, we find that 
\[
    [\cB\Gamma, \cB^D\cG_A] \cong [\cB\Gamma, \cB\!\Aut_0(A \otimes \bK)]
\]
carries a natural group structure whenever $A$ is a strongly self-absorbing $C^*$-algebra. In light of this, it was speculated in \cite{pacheco2025gkernelscrossedmodules} that, for every strongly self-absorbing $C^*$-algebra $A$, the classifying spaces $\cB^{D}\cG_A$, $\cB^DP\cG_A$ and $\cB^{D}\wcG_A$ all admit infinite loop space structures induced by the tensor product. The aim of this paper is to prove this conjecture. As a consequence, the natural transformation \eqref{eqn:nat_trafo} will take values in cohomology groups. In follow-up work we will explore how short exact sequences of crossed modules will give rise to long exact sequences interlinking the corresponding cohomology theories. Ultimately, this will lead to a topological understanding of the lifting obstructions for strongly self-absorbing $C^*$-algebras that will allow us to tackle some of the conjectures in~\cite{Izumi2024}.

We briefly recall the connection between spectra and cohomology theories. By definition an infinite loop space $K$ is weakly equivalent to the zeroth space in an $\Omega$-spectrum. This is a sequence $(K_n, \epsilon_n)_{n \in \N_0}$ of pointed spaces (in the category of $k$-spaces) together with weak equivalences
\[
    \epsilon_n \colon K_n \to \Omega K_{n+1}\ .
\]
Here, $\Omega Y$ denotes the space of based loops in $Y$ for a pointed space $Y$ equipped with the compact-open topology. In particular, $K \simeq K_0 \simeq \Omega^n K_n$, which means we may view $K_n$ as an $n$-fold delooping of $K_0$. By a classical result in algebraic topology, see for example \cite[Theorem~4.58]{hatcher2001algebraic} for a proof, the functor $X \mapsto [X, K_n]$ defines an (unreduced) cohomology theory on finite CW-complexes. Thus, having infinite loop space structures on $\cG_A$, $P\cG_A$ and $\wcG_A$ will show that \eqref{eqn:nat_trafo} is not only group-valued, but these groups are computable using tools like excision. 

Stable homotopy theory offers a rich collection of infinite loop space machines \cite{Adams1978} that produce spectra from combinatorial or category-theoretic data; one prominent example are $\Gamma$-spaces developed by Segal \cite{segal74}. Let $\Gamma^{\op}$ be the category of pointed finite sets and basepoint preserving maps. A $\Gamma$-space is a functor from $\Gamma^\op$ to spaces that satisfies conditions we recall in \cref{def:GammaSpace}. The simplest example of such a space arises from an abelian group. 

A crucial observation, going back to Segal's original work, is that, under a cofibrancy assumption, any $\Gamma$-space $X$ can be delooped to another $\Gamma$-space $\cB X$. Iterating this construction produces an $\Omega$-spectrum whose underlying infinite loop space is $X(1^+)$ (where $1^+ = \{0,1\}$ with basepoint $0$), provided $\pi_0(X(1^+))$ is an abelian group. May and Thomason later refined this framework by introducing a whiskering functor that takes care of the cofibrancy \cite{MAY1978205}. This is the machine that we will use.  

In our setting, all $\Gamma$-spaces arise from diagrams in topological spaces (more precisely $k$-spaces) defined as follows: Let $\bI$ be the category with objects $[n] = \{1,\dots,n\}$ for $n \in \N_0$ (where $[0] = \emptyset$) and injective maps as its morphisms. This category has a monoidal structure given by the disjoint union. An $\bI$-FCP (short for functor with cartesian product) is a commutative monoid in the category of $\bI$-shaped diagrams in spaces\footnote{These have appeared under many other names in the literature before, for example commutative $\bI$-monoids \cite{Dadarlat_2015}.}. They give rise to $\Gamma$-spaces using a construction that has appeared several times in the literature (see for example \cite[Section~12]{Lind_2013}). Hence, to get to a connective $\Omega$-spectrum we chain the following machines together: 
\[
    \bI\text{-FCPs} \quad \rightsquigarrow \quad \Gamma\text{-spaces} \quad \rightsquigarrow \quad \Omega\text{-spectra}\ .
\]
The underlying infinite loop space of the $\Omega$-spectrum resulting from an $\bI$-FCP $F$ is $X_F = \hocolim_\bI F$.

Since an $\bI$-FCP can be seen as an $\N_0$-graded monoid with extra structure it is well-suited to work with the minimal tensor product of $C^*$-algebras. It is straightforward to check that 
\[
    F_A([n]) = \cG_{A^{\otimes n}} \quad, \quad PF_A([n]) = P\cG_{A^{\otimes n}} \quad \text{and} \quad \wcF_A([n]) = \wcG_{A \otimes n}
\]
extend crossed module valued functors on $\bI$ together with a multiplicative structure. Applying a version of the classifying space functor object-wise turns these functors into the $\bI$-FCPs $\cB^2F_A$, $\cB^2PF_A$ and $\cB^2\wcF_A$ as we will see in \cref{monoids under day conv for our functors}. As outlined above, these give rise to $\Omega$-spectra with underlying infinite loop spaces
\[
    \hocolim_\bI \cB^2F_A \quad, \quad \hocolim_\bI \cB^2PF_A \quad \text{and} \quad \hocolim_\bI \cB^2\wcF_A \ ,
\]
respectively. At this point the assumption that $A$ is strongly self-absorbing becomes crucial. In combination with a version of B\"okstedt's lemma, which we show in \cref{boeksted lemma ref}, it allows us to identify the weak homotopy type to get 
\[
    \cB^D\cG_A \simeq \cB^2\cG_A \simeq \hocolim_\bI \cB^2F_A
\]
and likewise $\cB^DP\cG_A \simeq \hocolim_\bI \cB^2PF_A$ and $\cB^D\wcG_A \simeq \hocolim_\bI \cB^2\wcF_A$, which we will employ to prove our main result, \cref{thm:ssa_weak_htpy_type}.
\begin{thm*}
    Let $A$ be a SSA C*-algebra. Then, $\cB^{D}\cG_A$, $\cB^DP\cG_A$ admit infinite loop space structures. If $U(A)$ is connected, then so does $\cB^D\wcG_A$.
\end{thm*}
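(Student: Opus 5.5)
We follow the chain of machines described in the introduction: $\bI$-FCPs $\rightsquigarrow$ $\Gamma$-spaces $\rightsquigarrow$ $\Omega$-spectra. We write out the argument for $\cG_A$; the cases $P\cG_A$ and $\wcG_A$ are identical except where noted.

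\textbf{Step 1: the $\bI$-FCP.} First we set up the crossed module valued functor $F_A([n]) = \cG_{A^{\otimes n}}$ on $\bI$. An injection $\sigma\colon [m]\to[n]$ acts by $a \mapsto \sigma_*(a\otimes 1^{\otimes(n-m)})$, i.e. tensoring with units in the complementary slots and then permuting the factors. This acts on $U(-)$ and on $\Aut(-)$ via $\alpha\mapsto \sigma_*(\alpha\otimes\id)$. The multiplication $F_A([m])\times F_A([n])\to F_A([m+n])$ is given by $(u,v)\mapsto u\otimes v$ and $(\alpha,\beta)\mapsto\alpha\otimes\beta$. We check that these maps are morphisms of crossed modules: $\Ad_{u\otimes v}=\Ad_u\otimes\Ad_v$, and the action is compatible with the tensor product. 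We also check that they are associative and unital. They are commutative up to the symmetry isomorphism of $\bI$, which is exactly what the Day convolution requires.

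\textbf{Step 2: from FCP to spectrum.} Next we apply the classifying space construction objectwise, via the nerve of the associated topological $2$-group followed by geometric realisation. Since this construction preserves finite products, $\cB^2F_A$ becomes an $\bI$-FCP, as recorded in \cref{monoids under day conv for our functors}. The construction from \cite[Section~12]{Lind_2013} then produces a $\Gamma$-space whose underlying space is $\hocolim_\bI \cB^2F_A$. After May--Thomason whiskering, Segal's machine yields an $\Omega$-spectrum with zeroth space $\hocolim_\bI \cB^2F_A$. This requires $\pi_0$ to be a group. Here it holds automatically: each $\cB^2\cG_{A^{\otimes n}}$ is connected, because it is the classifying space of a $2$-group with a single object at the bottom level. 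Hence the homotopy colimit is connected and the spectrum is connective.

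\textbf{Step 3: identifying the homotopy type (the main obstacle).} We must show that $\cB^D\cG_A \simeq \cB^2\cG_A \simeq \hocolim_\bI \cB^2F_A$. The first equivalence compares two models of the classifying space and is taken from \cref{def:classifying_space} and the surrounding discussion.

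For the second we use the Bökstedt-type lemma \cref{boeksted lemma ref}. This says that if every morphism of $\bI$ between objects of sufficiently high cardinality is sent to a weak equivalence, then the inclusion of each high stage into the homotopy colimit is a weak equivalence; one should also check that stage $1$ qualifies.

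So we must show that for every injection $\sigma\colon[m]\to[n]$ with $m\ge1$, the induced map $\cB^2\cG_{A^{\otimes m}}\to\cB^2\cG_{A^{\otimes n}}$ is a weak equivalence. Here strong self-absorption enters. The first-factor embedding $A\to A\otimes A$ is approximately unitarily equivalent to an isomorphism, so by \cite{Dadarlat_2015}-type results it induces homotopy equivalences $U(A^{\otimes m})\to U(A^{\otimes n})$ and $\Aut(A^{\otimes m})\to\Aut(A^{\otimes n})$. Permutations of tensor factors are homotopic to the identity on these groups, again by strong self-absorption: the flip on $A\otimes A$ is approximately inner via a path of unitaries.

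A map of crossed modules that is a weak equivalence on both groups induces a weak equivalence of classifying spaces, by comparing the fibration sequences $\cB U\to\cB^2\cG\to\cB\Aut$. For $P\cG_A$ the same argument works once we know that $PU(A)=U(A)/\mathbb{T}$ is handled compatibly, using the quotient fibration. For $\wcG_A$, the universal-cover-type group $\wcU(A)$ is only well behaved, and only functorial with the right homotopy type, when $U(A)$ is connected. This is exactly the hypothesis in the theorem, and it is where we expect to have to work hardest, by verifying that the equivalences lift to $\wcU$.

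Combining Steps 2 and 3, $\cB^D\cG_A$, $\cB^DP\cG_A$ and, when $U(A)$ is connected, $\cB^D\wcG_A$ are weakly equivalent to zeroth spaces of $\Omega$-spectra, hence are infinite loop spaces.
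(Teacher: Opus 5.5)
Your architecture matches the paper's. You use the same $\bI$-FCPs $\cB^2F_A$, $\cB^2PF_A$, $\cB^2\wcF_A$, the passage through $\Gamma$-spaces and the May--Thomason machine, and B\"okstedt's lemma applied at stage $[1]$. You also use the same two inputs from strong self-absorption: contractibility of $\Aut(A^{\otimes n})$, and the weak equivalence $u\mapsto u\otimes 1_A$ on unitary groups, transferred to $PU$ via the principal $U(1)$-bundle. Deducing $\pi_0(\hocolim_\bI\cB^2F_A)=\ast$ from connectedness of each stage and of $\bI$ is fine; the paper instead reads it off from the weak equivalence $\cB^2\cG_A\to\hocolim_\bI\cB^2F_A$.

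The one step where you argue differently is wrong as written. Your claim that a map of crossed modules which is a weak equivalence on both groups induces a weak equivalence on $\cB^2$ is true, given well-pointedness. But $\cB U\to\cB^2\cG\to\cB\Aut$ is not a fibration sequence. For the crossed module $H\xrightarrow{\id}H$ the category $\cG$ is indiscrete, so $\cB^2\cG\simeq\ast$, and your sequence would make $\cB H$ the homotopy fibre of $\ast\to\cB H$, which is actually $H$. The correct sequence comes from the Borel quasi-fibration $G\to\cB\cG\to\cB H$ furnished by $N\cG\cong B_\ast(F,C_H,\ast_H)$, and it deloops to $\cB H\to\cB G\to\cB^2\cG$. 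With $\Aut(A^{\otimes n})$ contractible this gives $\cB\cG_{A^{\otimes n}}\simeq\cB U(A^{\otimes n})$, so your comparison can be repaired. However, that repair needs a quasi-fibration argument using well-pointedness, which you have not supplied.

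The paper avoids fibration sequences altogether. It identifies $N_k\cG_{A^{\otimes n}}\cong\Aut(A^{\otimes n})\times U(A^{\otimes n})^k$ by \cref{lem:nerve_of_crossed_mod_and_bar_construction}. These nerves are Reedy cofibrant because $U(A)$ and $PU(A)$ (hence also $\wcU(A)$) are well-pointed by \cref{big topological lemma for ua and pua}. It then applies \cref{pointwise weak equiv realize to weak equiv} twice, first to $N\cG$ and then to $N\cB\cG$. The same well-pointedness hypotheses underlie $\cB^D\cG\simeq\cB^\otimes\cG\simeq\cB^2\cG$ (\cite[Theorem~4.9]{pacheco2025gkernelscrossedmodules} and \cref{weak equiv between pennig constr and B^2}), so you must check them, not just cite ``the surrounding discussion''.

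Two smaller points. First, permutations need no homotopy argument: $\Sigma_\sigma$ is an isomorphism of topological crossed modules, so $\cB^2F_A(\sigma)$ is a homeomorphism. Your claim that the flip is homotopic to the identity on $U(A\otimes A)$ is unproved and unnecessary. Second, the $\wcU$ case is not the hard part. The maps $\widetilde i_{n,m}$ exist by functoriality of the universal cover, and they are weak equivalences by the long exact sequences of the coverings and the five lemma.
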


The paper is structured as follows: We collect the necessary preliminaries about $C^*$-algebras and homotopy theory in \cref{section 1}. The convenient model for our classifying spaces $\cB^2\cG$ uses the double nerve and is described in detail in \cref{new model subsection}. We show in \cref{weak equiv between pennig constr and B^2}, whose proof is in the appendix, that it is weakly equivalent to the ones in \cite{pacheco2025gkernelscrossedmodules}. In \cref{section 2} we collect some technical results about the point-set topology of $U(A)$, $PU(A)$ and $\wcU(A)$ that guarantee well-pointedness and that these topological groups and the classifying spaces of their associated crossed modules are indeed objects in the category of Hausdorff $k$-spaces. Their slightly technical proofs have been moved to the appendix. We then prove the version of B\"okstedt's lemma that is needed in \cref{section 3}. The main result and its proof is contained in \cref{section 4}, where we define the functors $F_A$, $PF_A$ and $\wcF_A$ and apply the infinite loop space machine. 


\section{Preliminaries}\label{section 1}
In this section we will introduce the necessary background about the class of $C^*$-algebras we will consider and the machinery from stable homotopy theory that we will need. 

\subsection*{Conventions.}
We will work in the following category of topological spaces and with the following notion of topological category. We refer to  \cite{Steenrod1967} for details.
    \begin{enumerate}[i)]
        \item Denote by $\Top$ the category of Hausdorff $k$-spaces and continuous maps. The term ``space'' will always refer to an object of $\Top$.
        \item Let $\TCat$ be the category of topological categories $\cC$ and continuous functors. Objects $\ob{\cC}$ and morphisms $\mor{\cC}$ of $\cC$ are objects in $\Top$ and the inclusion of the identities $\ob{\cC} \to \mor{\cC}$ and also the source and target maps $s,t \colon \mor{\cC} \to \ob{\cC}$ are continuous. Likewise, a continuous functor is a functor $F \colon \cC \to \mathcal{D}$ between topological categories, such that the maps on objects and morphisms are continuous. \\ The objects of $\TCat$ are further required to have the property that the inclusion of the identities $\ob{\cC} \to \mor{\cC}$ is a (closed) cofibration. This is a technical requirement that aids in the construction of the homotopy colimit of a functor (see \cref{double bar construction section}) and is related to \cref{pointwise weak equiv realize to weak equiv}.
    \end{enumerate}

\subsection{Strongly self-absorbing \texorpdfstring{$C^*$}{CStar}-algebras}
The class of strongly self-absorbing $C^*$-algebras was introduced by Toms and Winter \cite{TomsWinter2007}. They play a cornerstone role in the classification programme for separable, simple, nuclear $C^*$-algebras in the sense that some important properties of these algebras are equivalent to tensorial absorption of a strongly self-absorbing one. Purely infiniteness, for example, corresponds to tensorial absorption of $\Cuntz{\infty}$. For a unital $C^*$-algebra $A$ we denote its unitary group by $U(A)$. Throughout the paper we will denote by $\otimes = \otimes_{\text{min}}$ the minimal tensor product of $C^*$-algebras.

\begin{defn}\label{defn of ssa algebra}
    A separable unital $C^*$-algebra $A$ is called \emph{strongly self-absorbing} (abbreviated: SSA) if there is a *-isomorphism $\psi \colon A \to A \otimes A$ and a continuous map $u \colon [0, 1) \to U(A)$ such that 
    \[
        \lim_{t \to 1^{-}} \norm{\psi(d) - u_t(d \otimes 1)u_t^{\ast}} = 0 
    \]
    for all $d \in A$.
\end{defn}

Due to the recent progress in the classification programme  (see \cite[Corollary~D]{TikuisisWhiteWinter2017} for the concrete result) the following is a complete list of strongly self-absorbing $C^*$-algebras that satisfy the UCT. Here, $\mathcal{Z}$ denotes the Jiang-Su algebra, $M_P$ for a set of primes $P$ is the UHF-algebra constructed from matrix algebras $M_p(\C)$ for $p \in P$, $\mathcal{Q}$ is the universal UHF-algebra and $\Cuntz{n}$ for $n \in \{2,\infty\}$ are Cuntz algebras. An arrow in the diagram points to the algebra that tensorially absorbs the one at the root of the arrow. 
\[
\begin{tikzcd}[row sep=0.1cm]
{} &\mathcal{Z}\ar[dd] \ar[r]  & M_P \ar[dd]\ar[r] & {\mathcal{Q}}\ar[dd]\ar[dr] &{}\\
{\C}\ar[ur]\ar[dr] &{} & {}  & {}&{\Cuntz{2}}\\
{} &\Cuntz{\infty} \ar[r]  & \Cuntz{\infty} \otimes M_P\ar[r] & {\Cuntz{\infty} \otimes \mathcal{Q}}\ar[ur] &{}	
\end{tikzcd}
\]

\begin{defn} \label{def:PUA_wcUA}
    Let $A$ be a unital $C^*$-algebra. The \emph{projective unitary group} $PU(A)$ is the quotient of $U(A)$ by its centre $ZU(A)$. If $A$ is simple (which is the case if $A$ is SSA), then $ZU(A) = \mathbb{T} \cdot 1_A =: U(1)$.

    If in addition $A$ is such that $U(A)$ is connected, then we denote by $\wcU(A)$ the \emph{universal cover} of $U(A)$. We have $\wcU(A) = \mathcal{P}_{U(A)}/\Omega_0U(A)$, where 
    \[
        \mathcal{P}_{U(A)} = \{ \gamma \in C([0,1],U(A))\ :\ \gamma(0) = 1_A\} 
    \]
    and $\Omega_0U(A)$ is the normal subgroup of the loops in $U(A)$ based at $1_A$ that are based homotopic to the constant loop.
\end{defn}


\subsection{Simplicial spaces, geometric realisations and nerves of categories}\label{nerves of cats section}
Simplicial sets describe topological spaces that are glued together from simplices, where, for example, $0$-simplices are points, $1$-simplices are edges and $2$-simplices are triangles. They can be seen as the blueprints for a space, where the passage to the actual space is performed by a functor called geometric realisation. More generally, we can consider simplicial objects in an arbitrary category $\cC$. We recall their definition here.

\begin{defn}[$\Delta$ category]
    The \emph{simplex category} $\Delta$ has as its objects the sets $[n] = \{0, 1,...,n\}$ (for $n \geq 0$). Its morphisms are all order preserving functions $[n] \to [m]$. Let $\Delta_{\text{inj}} \subset \Delta$ be the category with the same objects, but with only the injective maps as morphisms. 
\end{defn}

\begin{defn}
   For any category $\cC$, (covariant) functors $\Delta^{\op}\to \cC$ are called \emph{simplicial objects} of $\cC$. For instance:
    \begin{itemize}
        \item A simplicial set is a functor $\Delta^{\op} \to \text{Sets}$
        \item A simplicial space is a functor $\Delta^{\op} \to \Top$
    \end{itemize}
    For a simplicial object $S$ we denote $S([n])$ by $S_n$.
\end{defn}

We recall two useful functors that turn simplicial sets or spaces into topological spaces. For each $n \in \N_0$ the topological simplex $\Delta^n$ is given by
\[
    \Delta^n = \{ (t_0, \dots, t_n) \in \R^{n+1} : 0 \leq t_i \leq 1 \text{ for } i \in \{0,\dots, n\} \text{ and }  t_0 + \dots + t_n = 1 \}\ .
\]
Mapping $[n]$ to $\Delta^n$ and $f \in \Delta([n], [m])$ to 
\[
    f_\ast \colon \Delta^{n} \to \Delta^{m} \quad, \quad f_*(t_0, \dots, t_n) = \left(\sum_{i \in f^{-1}(0)} t_i, \sum_{i \in f^{-1}(1)} t_i, \dots, \sum_{i \in f^{-1}(m)} t_i\right)
\]
gives rise to a functor $\Delta \to \Top$.

\begin{defn}[Geometric realisation]\label{defn:geom_realization}
Let $S$ be a simplicial space. The \emph{thin geometric realisation} of $S$ is defined by 
\[
    \lvert S \rvert = \coprod_{n \geq 0} S_n \times \Delta^{n}/\!\!\sim \ ,
\]
where the equivalence relation is generated by 
\[
    (S(f)(x), y) \sim (x, f_*(y))
\]
for all $f \in \mor{\Delta}$, $x \in S_{t(f)}$ and $y \in \Delta^{s(f)}$, where $t, s$ are the source and target maps. The \emph{fat geometric realisation} is denoted by $\norm{S}$ and is a quotient space of the same coproduct as above, but by the equivalence relation that is generated only by the morphisms $f$ in $\Delta_{\text{inj}}$. 
\end{defn}

By definition we have a natural map $\norm{S} \to \lvert S\rvert$. Under favourable circumstances, that we will discuss now, this map is a weak equivalence. Let $\sigma^{n, i} \colon [n+1] \to [n]$ be the unique order-preserving map that hits $i$ twice. These maps are called codegeneracy maps with $S(\sigma^{n,i})$ being the degeneracies.

\begin{defn}[Reedy cofibrancy]\label{reedy cofib}
Given a simplicial space $S$, we define the $(n+1)$st \emph{latching object} of $S$ by 
\[
L_{n+1}S := \bigcup_{0 \leq i \leq n}S(\sigma^{n,i})(S_{n}) \subseteq S_{n+1}
\] 
which is a subspace of $S_{n+1} = S([n+1])$. $S$ is called a \emph{Reedy cofibrant} simplicial space if the inclusion $L_{n+1}S \subseteq S_{n+1}$ is a closed cofibration for all $n \geq 0$.
\end{defn}

We recall the following relevant result. A proof can be found in \cite[Proposition~1]{tomDieck1973/74}.

\begin{prop}\label{fat thin weak equiv for reedy cofib}
    If $S$ is a Reedy cofibrant simplicial space such that for all $n \geq 0$ the space $S_n$ is a Hausdorff $k$-space, then the canonical map $\norm{S} \to |S|$ is a weak equivalence.
\end{prop}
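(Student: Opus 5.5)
We plan to compare both realisations through their skeletal filtrations. For each $n$ let $\norm{S}^{(n)}$ and $|S|^{(n)}$ be the images of $\coprod_{k\le n} S_k\times\Delta^k$ in $\norm{S}$ and $|S|$. In the category of Hausdorff $k$-spaces, compact subsets of $\norm{S}$ and $|S|$ lie in finite skeleta. This holds provided each skeleton inclusion is a closed cofibration, so that the realisations are colimits of sequences of closed $T_1$-inclusions. Weak equivalence of the colimits therefore reduces to showing that the canonical maps $\norm{S}^{(n)}\to|S|^{(n)}$ are compatible weak equivalences for all $n$, or more precisely to a homotopy-equivalence comparison at each stage.

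First, the fat skeleta are built by pushouts along the maps $S_n\times\partial\Delta^n\to\norm{S}^{(n-1)}$. These are attached along $S_n\times\partial\Delta^n\subseteq S_n\times\Delta^n$, which is always a closed cofibration.

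Second, the thin skeleta are built by pushouts along $S_n\times\partial\Delta^n\cup L_nS\times\Delta^n \subseteq S_n\times\Delta^n$. Reedy cofibrancy together with the product lemma for closed cofibrations (valid for Hausdorff $k$-spaces, as in Steenrod's convenient category) shows this is again a closed cofibration. Hence the skeleton inclusions are closed cofibrations in both cases, and both filtrations are filtrations by closed $T_1$ cofibrations.

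Third, we factor the comparison $\norm{S}\to|S|$ through the intermediate construction that collapses only the degeneracies. Equivalently, we follow tom Dieck's approach and write $\norm{S}$ as the realisation of a simplicial space in which degeneracies are freely added. We then show inductively that $\norm{S}^{(n)}\to|S|^{(n)}$ is a homotopy equivalence. The inductive step uses the gluing lemma for pushouts along cofibrations, so that homotopy equivalences of the pieces glue to a homotopy equivalence.

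The key local input is that the degenerate part contributes a contractible homotopy: the piece of $S_n\times\Delta^n$ lying over $L_nS$ is collapsed via the maps $\sigma_*$. We would try to show this by an explicit deformation on $L_nS\times\Delta^n$, using the cofibration $L_nS\subseteq S_n$ to extend homotopies. This local collapse is the step we expect to be the main obstacle. It requires careful use of the latching cofibrations and an induction over the union defining $L_nS$, ordered by the number of degeneracies. Once it is in place, passing to the colimit gives the weak equivalence, since maps from spheres and homotopies between them land in finite skeleta.
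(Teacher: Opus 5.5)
Your third step has a genuine gap. The maps $\norm{S}^{(n)}\to|S|^{(n)}$ between geometric skeleta are in general not homotopy equivalences, not even weak equivalences. Your reduction therefore rests on a sufficient condition that fails, and no skeleton-by-skeleton induction can prove the statement.

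Take $S$ to be the constant simplicial point. It satisfies all hypotheses, since $L_{n+1}S=S_{n+1}=\ast$. Then $|S|^{(n)}=\ast$ for every $n$. By contrast, $\norm{S}^{(1)}$ is a single $1$-simplex with both endpoints glued to the unique vertex, i.e.\ a circle. More generally, $\norm{S}$ has one cell in each dimension with cellular boundary $\partial_k=\sum_{i=0}^k(-1)^i$. Hence $\norm{S}^{(2k+1)}\simeq S^{2k+1}$, while $\norm{S}^{(2k)}$ is contractible.

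This is also why your ``key local input'' cannot be proved. In $\norm{S}^{(n)}$ the degenerate simplices $L_nS\times\Delta^n$ are attached only along $L_nS\times\partial\Delta^n$, so at stage $n$ there is nothing onto which they could be collapsed. The homotopy they create is killed only by degenerate simplices of higher dimension. For example, the circle coming from $s_0x$ is killed by $s_0s_0x$, which in turn creates a $3$-sphere, and so on. The equivalence $\norm{S}\simeq|S|$ exists only after passing to the colimit.

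The paper does not prove the statement; it cites tom Dieck. If you want to keep an inductive argument, filter instead by the simplicial skeleta $\mathrm{sk}_nS\subseteq S$. These are the simplicial subspaces generated by $S_0,\dots,S_n$, and they contain all iterated degeneracies in all degrees. Then compare $\norm{\mathrm{sk}_nS}\to|\mathrm{sk}_nS|$.
\begin{itemize}
\item By the Eilenberg--Zilber lemma, $\mathrm{sk}_nS$ is the pushout of $\mathrm{sk}_{n-1}S$ along $S_n\times\partial\Delta[n]\cup L_nS\times\Delta[n]\to S_n\times\Delta[n]$, where $\Delta[n]$ is the simplicial $n$-simplex.
\item Reedy cofibrancy makes this map levelwise a closed cofibration.
\item Both realisations preserve such pushouts.
\item The gluing lemma then reduces the inductive step to the simplicial sets $\Delta[k]$ and $\partial\Delta[k]$. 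There it follows from $\norm{\Delta[k]}\simeq\ast$, via an extra degeneracy, by induction on $k$.
\end{itemize}
Your compactness observation then applies to this filtration, since its stages are included by closed cofibrations.

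Alternatively, identify $\norm{S}$ with $\hocolim_{\Delta_{\text{inj}}^{\op}}S$ and, for Reedy cofibrant $S$, $|S|$ with $\hocolim_{\Delta^{\op}}S$. Then use that $\Delta_{\text{inj}}^{\op}\to\Delta^{\op}$ is homotopy cofinal.
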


The following result is crucial for the rest of the paper. A proof can be obtained by combining the previous proposition with \cite[Theorem~2.2]{Ebert_2019} (see also \cite[Chapter~VII, Proposition 3.6]{GoerssJardine2009}).

\begin{lemma}\label{pointwise weak equiv realize to weak equiv}
Let $X, Y$ be Reedy cofibrant simplicial Hausdorff $k$-spaces and let $\alpha \colon X \to Y$ be a natural transformation. If, for all $n \geq 0$, $\alpha_n$ is a weak equivalence, then $|\alpha| \colon |X| \to |Y|$ is a weak equivalence.
\end{lemma}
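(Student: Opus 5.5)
The plan is to go through the fat realisation and use that the fat realisation is homotopy invariant levelwise with no cofibrancy condition beyond the spaces being Hausdorff $k$-spaces. The natural transformation $\alpha$ induces a commutative square
\[
\begin{tikzcd}
\norm{X} \ar[r,"\norm{\alpha}"] \ar[d] & \norm{Y} \ar[d] \\
\lvert X\rvert \ar[r,"\lvert\alpha\rvert"] & \lvert Y\rvert
\end{tikzcd}
\]
where the vertical maps are the canonical comparison maps. Naturality of this square is immediate: the comparison map is induced by the identity on $\coprod_n S_n\times\Delta^n$, and $\alpha$ acts compatibly on both quotients. By \cref{fat thin weak equiv for reedy cofib}, applied to $X$ and to $Y$ (both Reedy cofibrant with Hausdorff $k$-space levels), the two vertical maps are weak equivalences. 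By two-out-of-three it therefore suffices to show that $\norm{\alpha}$ is a weak equivalence.

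For this I will invoke \cite[Theorem~2.2]{Ebert_2019}, which says that a levelwise weak equivalence of semisimplicial spaces induces a weak equivalence of fat realisations. The fat realisation only uses the semisimplicial structure (restriction to $\Delta_{\text{inj}}$), so this theorem applies directly. If I had to argue it by hand, I would filter $\norm{X}$ by skeleta $\norm{X}^{(n)}$. Each inclusion $\norm{X}^{(n-1)}\subseteq\norm{X}^{(n)}$ is a pushout along $X_n\times\partial\Delta^n\to X_n\times\Delta^n$, and this map is a closed cofibration because $\partial\Delta^n\subseteq\Delta^n$ is one and products with Hausdorff $k$-spaces preserve this. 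Such pushouts are homotopy pushouts. By induction on $n$ and the gluing lemma, $\norm{\alpha}^{(n)}$ is a weak equivalence for every $n$. The full map is then a weak equivalence because homotopy groups commute with the sequential colimit along closed inclusions of $T_1$ spaces: compact sets land in a finite skeleton.

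The only delicate point is the point-set topology in $\Top$. One has to check that the colimits and products here are formed in Hausdorff $k$-spaces and that the quotient realisations stay Hausdorff. The cited references address this for Hausdorff $k$-spaces, which is why both hypotheses (Reedy cofibrancy and the Hausdorff $k$-space condition) appear in the statement.
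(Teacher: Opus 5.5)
Your proposal is correct and is essentially the paper's own argument: the paper also obtains the lemma by combining \cref{fat thin weak equiv for reedy cofib} (fat-to-thin comparison for Reedy cofibrant levelwise Hausdorff $k$-spaces) with \cite[Theorem~2.2]{Ebert_2019} (levelwise weak equivalences induce weak equivalences of fat realisations), then concludes by two-out-of-three in the comparison square. Your skeletal-filtration sketch is just the standard proof of the cited theorem.
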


Finally, we recall the following property of the thin geometric realisation, which is proven in \cite[Theorem~11.6]{may1972geometry}.

\begin{prop}\label{B commutes with products}
Let $A, B$ be a simplicial spaces such that for each $n \geq 0$, $A_n, B_n$ are Hausdorff $k$-spaces. Then, the map induced by the projections $$|A \times B| \xrightarrow{\cong} |A| \times |B|$$ is a homeomorphism. The product on the right is the $k$-product (the product in $\Top$).
\end{prop}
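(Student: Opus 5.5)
The statement to prove is \cref{B commutes with products}: the map $|A \times B| \to |A| \times |B|$ induced by the projections is a homeomorphism. I would follow the classical argument of May. The plan is to reduce to products of simplices, using the facts that in $\Top$ products commute with quotients and with colimits in each variable separately.

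First, I would record the point-set input. In the category of Hausdorff $k$-spaces, $X \times (-)$ preserves colimits, and in particular quotient maps. This holds because $X \times (-)$ is a left adjoint to the $k$-ified mapping space functor. Consequently, a product of two quotient maps is again a quotient map. Applying this to the two defining quotients gives
\[
|A| \times |B| = \Bigl(\coprod_{p,q} A_p \times B_q \times \Delta^p \times \Delta^q\Bigr)/\!\sim\, ,
\]
where the relation is generated by the face and degeneracy relations in each variable separately. Thus $|A| \times |B|$ is the realisation of the bisimplicial space $(p,q) \mapsto A_p \times B_q$, taken with respect to $\Delta^p \times \Delta^q$.

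Second, I would compare this with the diagonal. The map $|A\times B| \to |A|\times|B|$ sends $[(a,b),t]$ to $([a,t],[b,t])$; it is continuous and well defined. For the inverse, I would use the standard triangulation of the prism $\Delta^p \times \Delta^q$ into nondegenerate $(p+q)$-simplices indexed by $(p,q)$-shuffles. Each shuffle $\mu$ determines a pair of surjections $[p+q]\to[p]$ and $[p+q]\to[q]$, and hence a degenerate simplex $(s_{\mu_1}a, s_{\mu_2}b) \in (A\times B)_{p+q}$. The inverse would send $(a,b,x)$, with $x$ lying in the shuffle simplex $\mu$ and having barycentric coordinates $u$ there, to $[(s_{\mu_1}a,s_{\mu_2}b),u]$.

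The key steps, in order, are:
\begin{enumerate}[(i)]
\item The inverse is well defined on overlaps of shuffle simplices, since adjacent shuffles agree on common faces by the simplicial identities.
\item The inverse is compatible with the relations in each variable.
\item The inverse is continuous on each $A_p\times B_q\times\Delta^p\times\Delta^q$, since the prism is a finite union of closed simplices.
\item The inverse therefore descends to $|A|\times|B|$ by the quotient property of the first step.
\item The two composites are the identity. For the composite through $|A\times B|$, I would check this on nondegenerate representatives, using that every point of $|S|$ has a unique nondegenerate representative with $t$ in the open simplex (the Eilenberg--Zilber lemma).
\end{enumerate}

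The main obstacle is compatibility with degeneracies and faces in step (ii): one must verify that $s_j$ applied to $a$ corresponds to collapsing the prism compatibly with the shuffle decomposition. I would handle this combinatorially by viewing points of $\Delta^p\times\Delta^q$ as pairs of monotone step functions, equivalently chains in $[p]\times[q]$. In that description both maps become explicit, and the inverse is visibly natural.
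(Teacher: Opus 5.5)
Your proposal is correct and is essentially the classical Milnor--May argument. The paper gives no proof of its own but cites \cite[Theorem~11.6]{may1972geometry}, whose proof rests on your two ingredients: products of identification maps are identifications in $\Top$, and the shuffle triangulation of $\Delta^p \times \Delta^q$ supplies a continuous inverse compatible with the simplicial relations in each variable.
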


Since each $[n]$ is a totally ordered poset, we may view it as a category, which we will denote in the same way. Any order-preserving map $f \colon [n] \to [m]$ gives rise to a functor $[n] \to [m]$ that we will also denote by $f$. Given any (small) category $\cC$, one can then construct a simplicial set $N\cC \colon \Delta^{\op} \to \text{Set}$. On objects it is defined  by 
\[
    N_n\cC := \Func{[n]}{\cC}\ .
\] 
Given $f \colon [n] \to [m]$ we have an induced map $f^* \colon N_m\cC \to N_n\cC$ given by $F \mapsto F \circ f$, and $N\cC(f) = f^*$ defines the functor $N\cC$ on morphisms.

\begin{rem*}
    Recall that $N_n\cC := \Func{[n]}{\cC}$ for any category $\cC$. Because $[n]$ is the totally ordered poset category with $n+1$ elements, $N_n\cC$ can be bijectively identified with just a chain of $n$ composable morphisms for $n \geq 1$. For $n = 0$, $N_0\cC := \Func{[0]}{\cC}$ is the set of functors from the category with one object and one identity morphism to $\cC$ and so, $N_0\cC$ is just the set $\ob{\cC}$. So, when $|\ob{\cC}| = 1$, $N_0\cC$ cotnains only a single element. In particular, this is the case when $\cC$ is the category associated to any group.
\end{rem*}

If $\cC$ is a topological category, then $N\cC$ becomes a simplicial space. Indeed, by the above remark $N_n\cC$ is a subspace of $(\mor{\cC})^{n}$ for $n \geq 1$ and $N_0\cC = \ob{\cC}$.

\begin{defn} \label{def:BC}
    Let $\cC$ be an object of $\TCat$. The space $\cB\cC := |N\cC|$ is called the \emph{classifying space} of $\cC$.
\end{defn}

Additional structure on $\cC$ will lead to similar ones on the classifying space. The most prominent example is the following:

\begin{lemma}\label{strict monoidal realizes to top monoid}
    Let $\cC$ be an object of $\TCat$ with a strict monoidal structure $(\cC, \ostar, 1)$ such that the bifunctor $\ostar$ is continuous. Then, $\cB\cC$ is a topological monoid.
\end{lemma}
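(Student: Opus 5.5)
The plan is to obtain the multiplication on $\cB\cC$ by applying the nerve and geometric realisation functors to the bifunctor $\ostar$, and then to transport the monoid axioms from $\cC$ using naturality together with \cref{B commutes with products}.

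\emph{Step 1: the monoid structure on the nerve.} Since $\ostar \colon \cC \times \cC \to \cC$ is a continuous functor and $N$ preserves products, we get a map of simplicial spaces $N\ostar \colon N(\cC\times\cC) \cong N\cC \times N\cC \to N\cC$. In degree $n$ this is componentwise composition-compatible tensoring of chains: two chains of $n$ composable morphisms are sent to the chain of their $\ostar$-products. Functoriality of $\ostar$ guarantees that the result is again composable. Continuity in each degree holds because $N_n\cC$ carries the subspace topology from $(\mor{\cC})^n$ and $\ostar$ is continuous on morphisms; in degree $0$ one uses continuity on objects instead. Strict associativity and strict unitality of $\ostar$ give the monoid identities for $N\ostar$ levelwise, with unit the simplicial map $N[0] \to N\cC$ picking out the object $1$ and its iterated identities. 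So $N\cC$ is a simplicial topological monoid.

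\emph{Step 2: realisation.} Define $\mu$ as the composite
\[
\cB\cC \times \cB\cC \xrightarrow{\cong} |N\cC \times N\cC| \xrightarrow{|N\ostar|} \cB\cC ,
\]
where the first map is the inverse of the homeomorphism of \cref{B commutes with products}. That proposition applies because each $N_n\cC$ is a Hausdorff $k$-space: it is a closed subspace (given by the fibre product conditions $t = s$) of a $k$-product of Hausdorff $k$-spaces. Take the unit to be the vertex $[1] \in N_0\cC \times \Delta^0$.

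\emph{Step 3: the axioms.} Associativity follows from naturality of the homeomorphism in \cref{B commutes with products}. Under it, $\mu\circ(\mu\times\id)$ and $\mu\circ(\id\times\mu)$ both correspond to $|N(\ostar\circ(\ostar\times\id))| = |N(\ostar\circ(\id\times\ostar))|$ on $|N\cC\times N\cC\times N\cC|$. One also needs compatibility of the product homeomorphisms for three factors, which holds since all of them are induced by projections. The unit law follows in the same way, using $|N[0]| = \mathrm{pt}$.

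The main obstacle is the point-set issue that the product must be taken in $\Top$ and that the realisation commutes with it. This is exactly why the $k$-product and the Hausdorff $k$-space hypothesis are needed, and \cref{B commutes with products} resolves it once we have checked that each $N_n\cC$ lies in $\Top$.
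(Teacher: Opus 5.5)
Your proposal is correct and takes essentially the same route as the paper. Both arguments observe that each $N_n\cC$ is a closed subspace of $(\mor{\cC})^n$, hence a Hausdorff $k$-space, and then apply $N$ and $|\cdot|$ to the strict associativity and unit diagrams for $\ostar$, using \cref{B commutes with products} and $\cB 1 = \ast$ to obtain the monoid axioms in $\Top$. Your version is slightly more explicit than the paper's about the unit vertex and about the compatibility of the product homeomorphisms for three factors.
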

\begin{proof}
    As $\cC$ is an object of $\TCat$, $\ob{\cC}$ and $\mor{\cC}$ are both objects of $\Top$ and the source and target maps are continuous. As such, for all $n \geq 0$, $N_n\cC$ is a subspace (by definition) of $(\mor{\cC})^n$ that is closed. Hence, $N_n\cC$ is a Hausdorff $k$-space. From the strict monoidal structure of $\cC$, one has the following commutative diagrams:
    \[
    \begin{tikzcd}[cramped]
	(\cC \times \cC) \times \cC & \cC \times (\cC \times \cC) & \cC \times \C \\
	\cC \times \cC && \cC
	\arrow[shift right, no head, from=1-1, to=1-2]
	\arrow[no head, from=1-1, to=1-2]
	\arrow["{\ostar \times 1}"', from=1-1, to=2-1]
	\arrow["{1 \times \ostar}"', from=1-2, to=1-3]
	\arrow["{\ostar }"', from=1-3, to=2-3]
	\arrow["{\ostar }", from=2-1, to=2-3]
    \end{tikzcd}
    \] 
    \[
    \begin{tikzcd}[cramped]
	1 \times \cC & \cC \times \cC & \cC \times 1 \\
	& \cC
	\arrow[from=1-1, to=1-2]
	\arrow[from=1-1, to=2-2]
	\arrow["\ostar"', from=1-2, to=2-2]
	\arrow[from=1-3, to=1-2]
	\arrow[from=1-3, to=2-2]
    \end{tikzcd}
    \] 
    where $1$ is category with one object and one morphism (the identity). Sequentially applying the functors $N$ and $|\cdot|$ and using \cref{B commutes with products} and that $\cB1 = \ast$, we obtain the commutative diagrams required to show that $\cB \cC$ is a monoid in $\Top$.
\end{proof}

\begin{defn} \label{def:top_mon_cat}
    Let $\TCatmon \subset \TCat$ be the subcategory of topological strict monoidal categories and monoidal functors.
\end{defn}

\subsection{Two-sided bar construction and homotopy colimits}\label{double bar construction section}

The two-sided bar construction has a plethora of good properties that make it suitable as a model of the homotopy colimit. We recall the ones we need here.

Throughout this section, all topological categories $\cC, \cC'$ are assumed to be small with discrete object space and have the property that the inclusion of the identities $x \mapsto 1_x$ is a closed cofibration.

\begin{defn}
We use the following naming convention: A \emph{right $\cC$-module} is a contravariant functor from $\cC$ to $\Top$. A \emph{left $\cC$-module} is a covariant functor from $\cC$ to $\Top$.
\end{defn}

Note that $\cC$ can be seen as a monoid in the category of $\ob{\cC}$-graphs in the sense of \cite[Section~12]{May1975}. Moreover, if $X$ is a right $\cC$-module, then 
\[
    \mathbb{X} = \coprod_{c \in \ob{\cC}} X(c)
\]
naturally has the structure of a right $\ob{\cC}$-graph over $\cC$. Similarly, we have a left $\ob{\cC}$-graph $\mathbb{Y}$ over $\cC$ associated to a left $\cC$-module $Y$. The following category will be the domain of the two-sided bar construction.

\begin{defn}[$\Mod$ category]
 The category $\Mod$ has objects all triples $(X, \cC, Y)$ where and $X$ is a right $\cC$-module and $Y$ is a left $\cC$-module. 
    A morphism $(X, \cC, Y) \to (X', \cC', Y')$ in $\Mod$ is a triple $(\alpha, F, \beta$) consisting of
    \begin{enumerate}[(i)]
        \item A functor $F \colon \cC \to \cC'$ where the object/morphism maps are continuous
        \item A natural transformation $\alpha \colon X \to X' \circ F^{\op}$
        \item A natural transformation $\beta \colon Y \to Y' \circ F$.
    \end{enumerate} The composition and the identities are defined in the obvious way.
\end{defn}

The point is that on this category we have a natural functor $\Mod \to s\Top$ where $s\Top$ is the category of simplicial Hausdorff $k$-spaces.

\begin{defn}[Two-sided bar construction] \label{defn:bar}
There is a functor $B_{\ast} \colon \Mod \to s\Top$ defined as follows: $B_{m}(X, \cC, Y)$ is the subspace of 
\[
    \mathbb{X} \times ( \mor{\cC})^{m} \times \mathbb{Y}
\] 
such that for any $(x, f_1, ..., f_m, y) \in B_m(X, \cC, Y)$ we have:
\[
    x \in X(t(f_1)) \quad, \quad s(f_i) = t(f_{i+1}) \quad, \quad y \in Y(s(f_m))\ .
\]
The $i$-th degeneracy is given by inserting the identity on the object $s(f_i) = t(f_{i+1})$. For $i = m$, the face map is obtained by erasing $f_m$ and replacing $y$ by $Y(f_m)(y)$. For $i = 0$, the face map is obtained by erasing $f_1$ and replacing $x$ by $X(f_1)(x)$. For $0 < i < m$, the face map is defined by replacing $f_{i-1}$ and $f_{i}$ by the composition $f_{i}\circ f_{i-1}$ in-between the remaining morphisms.
\end{defn}

The functor $B_\ast$ and the geometric realisation give rise to a convenient model for the homotopy colimit of a functor.

\begin{defn}[Homotopy colimit]\label{hocolim model}
Let $Z \colon \cC \to \Top$ be a functor. One has the constant right $\cC$-module at $\ob{\cC}$ denoted by $\ast_\cC$. We define
\[
    \hocolim_\cC Z := |B_\ast(\ast_\cC, \cC, Z)|
\]
\end{defn}

If $\mor{\cC}$ has the discrete topology, then the following two lemmas are often useful (we use them in the proof of B\"okstedt's Lemma, see \cref{boeksted lemma ref}).

\begin{lemma}\label{behavior of bar const with projections}
    Let $\cC$ be a small category and give $\mor{\cC}$ and $\ob{\cC}$ the discrete topologies. Then, for any functor $X \colon \cC \to \Top$, there are homeomorphisms
    \[
        \hocolim_{\cC \times \cC} (X \circ \pi_1) \cong (\hocolim_{\cC} X) \times \cB\cC
    \]
    and 
    \[
        \hocolim_{\cC \times \cC} (X \circ \pi_2) \cong (\hocolim_{\cC} X) \times \cB\cC\ ,
    \]
    where the maps $\hocolim_{\cC \times \cC} (X\circ \pi_i) \to \hocolim_{\cC} X$ are the maps induced by the projection maps $\pi_i$.
\end{lemma}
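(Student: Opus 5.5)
We prove this at the level of simplicial spaces and pass to realisations at the end. Write $\cB\cC=|N\cC|$ for the discrete category $\cC$, and recall that
\[
    \hocolim_\cC X=|B_\ast(\ast_\cC,\cC,X)|\ .
\]
The plan is to produce an isomorphism of simplicial spaces
\[
    B_\ast(\ast_{\cC\times\cC},\cC\times\cC,X\circ\pi_1)\;\cong\;B_\ast(\ast_\cC,\cC,X)\times N\cC
\]
and then apply \cref{B commutes with products} to its realisation.

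First I would describe the left-hand $m$-simplices. Since $\ob{\cC\times\cC}=\ob{\cC}\times\ob{\cC}$ is discrete, $B_m(\ast_{\cC\times\cC},\cC\times\cC,X\circ\pi_1)$ is the disjoint union, over chains of $m$ composable morphisms
\[
    (f_1,g_1),\dots,(f_m,g_m)
\]
in $\cC\times\cC$, of copies of $X(\pi_1(s(f_m,g_m)))=X(s(f_m))$. A composable chain in $\cC\times\cC$ is exactly a pair of composable chains $(f_1,\dots,f_m)$ and $(g_1,\dots,g_m)$ in $\cC$, with the convention for $m=0$ that it is a pair of objects. This gives a bijection
\[
    (\ast,(f_i,g_i),x)\mapsto\bigl((\ast,f_1,\dots,f_m,x),(g_1,\dots,g_m)\bigr)
\]
onto $B_m(\ast_\cC,\cC,X)\times N_m\cC$. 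The map is a homeomorphism because both sides are coproducts of copies of the spaces $X(c)$ indexed by discrete sets, and the indexing sets correspond. Here $N_m\cC$ is discrete, so the product with it is just a coproduct.

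Next I would check simpliciality. Degeneracies insert identities $(1_a,1_b)=(1_a,1_b)$ componentwise, so they are compatible. For inner faces, composition in $\cC\times\cC$ is componentwise, so they match as well. The outer face $d_0$ discards $(f_1,g_1)$, and the constant module $\ast$ absorbs it, which matches $d_0$ on both factors. The face $d_m$ replaces $x$ by $(X\circ\pi_1)(f_m,g_m)(x)=X(f_m)(x)$ and drops $g_m$, which is exactly $d_m$ on $B_\ast(\ast_\cC,\cC,X)$ together with $d_m$ on $N\cC$. Hence the bijection is an isomorphism of simplicial spaces, and naturality in $m$ holds.

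Then I would realise. All levels are Hausdorff $k$-spaces, since they are coproducts of the $X(c)\in\Top$. Realisation preserves isomorphisms, and \cref{B commutes with products} gives
\[
    |B_\ast(\ast_\cC,\cC,X)\times N\cC|\cong\hocolim_\cC X\times\cB\cC\ .
\]
Composing the isomorphism with the projection to the first factor is exactly the simplicial map induced by $\pi_1\colon\cC\times\cC\to\cC$ together with the identity natural transformation $X\circ\pi_1\to X\circ\pi_1$. So the first coordinate of the homeomorphism is the induced map, as claimed.

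The $\pi_2$ case is symmetric: swap the roles of the factors, or precompose with the flip automorphism of $\cC\times\cC$. The only delicate point is the bookkeeping of the outer faces. One must check that the module acts only through the first coordinate while $N\cC$ still receives the correct face from the second; this is why the $X(s(f_m))$ indexing convention has to be handled carefully.
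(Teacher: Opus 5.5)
Your proposal is correct and is essentially the paper's proof: you use discreteness to build the isomorphism of simplicial spaces $B_\ast(\ast_{\cC\times\cC},\cC\times\cC,X\circ\pi_1)\cong B_\ast(\ast_\cC,\cC,X)\times N\cC$ and then apply \cref{B commutes with products}, and you write out the face and degeneracy checks that the paper leaves implicit. One harmless convention point: with the bar construction's ordering ($d_0$ drops $g_1$, $d_m$ drops $g_m$), the second factor is literally $B_\ast(\ast_\cC,\cC,\ast)\cong N(\cC^{\op})$, i.e.\ $N\cC$ with reversed face and degeneracy indexing; its realisation is homeomorphic to $\cB\cC$, so nothing changes.
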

\begin{proof}
    Because the topologies on $\mor{\cC}$ and $\ob{\cC}$ are discrete, there is an isomorphism of simplicial spaces
    $B_\ast(\ast_{\cC \times \cC}, \cC \times \cC, X \circ \pi_1) \xrightarrow{\cong}B_\ast(\ast_{\cC}, \cC, X) \times N\cC$ and the simplicial map $B_\ast(\ast_{\cC \times \cC}, \cC \times \cC, X\circ \pi_1) \to B_\ast(\ast_{\cC}, \cC, X)$ is the one induced by the map in $\Mod$ induced by the functor $\pi_1 \colon  \cC \times \cC \to \cC$. Then, one uses \cref{B commutes with products}. The proof for $\pi_2$ is identical.
\end{proof}

\begin{lemma}\label{bar construction const functor}
    Let $\cC$ be a small category and give $\mor{\cC}$ and $\ob{\cC}$ the discrete topologies. If $X \in \ob{\Top}$ and $c_X \colon \cC \to \Top$ is the constant functor at $X$, then 
    \[
        \hocolim_{\cC} c_X \cong X \times \cB\cC\ .
    \]
\end{lemma}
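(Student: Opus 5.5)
We argue at the level of simplicial spaces and then pass to realisations, exactly as in the proof of \cref{behavior of bar const with projections}. Since $Z = c_X$ is constant, every morphism of $\cC$ acts on $X$ as the identity. The $\ob{\cC}$-graph associated to $c_X$ is therefore $\coprod_{c \in \ob{\cC}} X \cong \ob{\cC} \times X$, where we use that $\ob{\cC}$ is discrete.

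Now fix $m \geq 1$. An element of $B_m(\ast_\cC, \cC, c_X)$ is a tuple $(\ast, f_1, \dots, f_m, y)$ with $s(f_i) = t(f_{i+1})$ and $y$ lying in the copy of $X$ indexed by $s(f_m)$. Because $y$ is an arbitrary point of $X$ and the index $s(f_m)$ is determined by $f_m$, the assignment $(\ast, f_1, \dots, f_m, y) \mapsto ((f_1, \dots, f_m), y)$ is a bijection
\[
    B_m(\ast_\cC, \cC, c_X) \to N_m\cC \times X\ .
\]
For $m = 0$ it is the evident bijection $\ob{\cC} \times X \to N_0\cC \times X$. In each degree this map is a homeomorphism. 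Indeed, $\mor{\cC}$ is discrete, so both sides are disjoint unions of copies of $X$ indexed by the same discrete set of composable strings, and the map is the identity on each copy. Both sides are Hausdorff $k$-spaces.

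Next we check that these maps commute with the simplicial structure, where the target is the product of the simplicial set $N\cC$ (a discrete simplicial space) with the constant simplicial space $X$. Degeneracies insert identities on both sides. The inner faces $0<i<m$ compose morphisms and leave $y$ fixed. The face $d_0$ deletes $f_1$ and applies $\ast_\cC(f_1) = \id$. The face $d_m$ deletes $f_m$ and replaces $y$ by $c_X(f_m)(y) = y$; it only re-indexes $y$ into the copy of $X$ over $t(f_m)$. This is precisely the last face of $N\cC$ together with the identity on $X$. So we have an isomorphism of simplicial spaces $B_\ast(\ast_\cC, \cC, c_X) \cong N\cC \times X$.

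Finally we take thin geometric realisations. By \cref{B commutes with products},
\[
    |N\cC \times X| \cong |N\cC| \times |X| = \cB\cC \times X\ ,
\]
using that the realisation of a constant simplicial space $X$ is $X$ itself. The only point needing care is the face $d_m$, where the coproduct index changes; the constancy of $Z$ makes this harmless. Everything else is bookkeeping.
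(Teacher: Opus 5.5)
Your proposal is correct and is essentially the paper's proof: discreteness of $\ob{\cC}$ and $\mor{\cC}$ gives a degreewise homeomorphism $B_\ast(\ast_\cC,\cC,c_X)\cong N\cC\times X$ compatible with faces and degeneracies, since constancy of $c_X$ trivialises the only face that acts on $X$. The paper then also concludes with \cref{B commutes with products} and $|X|\cong X$ for the constant simplicial space. One harmless convention point, which the paper also glosses: with $N_n\cC=\Func{[n]}{\cC}$ and the bar indexing $s(f_i)=t(f_{i+1})$, the face maps match those of $N(\cC^{\op})$ (equivalently $N\cC$ with face indices reversed), so strictly you obtain $\cB\cC^{\op}\times X$, which is canonically homeomorphic to $\cB\cC\times X$.
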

\begin{proof}
    Because the topologies on $\mor{\cC}$ and $\ob{\cC}$ are discrete, there is an isomorphism of simplicial spaces
    $B_\ast(\ast_{\cC}, \cC, c_X) \xrightarrow{\cong}\Delta_X\times N\cC$ where $\Delta_X$ is the simplicial space constant at $X$. Then, one uses \cref{B commutes with products} and the homeomorphism $|\Delta_X| \cong X$.
\end{proof}

\mycomment{
One very important property of $\hocolim$ is the following cofinality criterion. A proof can be found in \cite[Section~10]{Dugger2008}.

\begin{lemma}[Cofinality]\label{cofinality criterion}
Let $\alpha \colon \cC \to \cC'$ be a functor such that for all $c' \in \ob{\cC'}$, the comma category $(c' \downarrow \alpha)$ is non-empty and has contractible classifying space. Then, for all functors $X \colon \cC' \to \Top$, the canonical map
$\hocolim_\cC (X \circ \alpha) \to \hocolim_{\cC'} X$ is a weak equivalence.
\end{lemma}
}

The final property of the homotopy colimit we need is the following. It follows from \cite[Proposition~4.7 and Remark~4.9]{Dugger2008}.

\begin{lemma}\label{natural weak equiv gives weak equiv}
    Let $F, G \colon \cC \to \Top$ be functors and $\alpha \colon F \to G$ a natural weak equivalence. Then, $\alpha$ induces a weak equivalence $\hocolim_{\cC} \alpha \colon \hocolim_{\cC} F \to \hocolim_{\cC} G$.
\end{lemma}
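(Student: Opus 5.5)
The plan is to realise $\hocolim_{\cC} \alpha$ as the geometric realisation of a levelwise weak equivalence of simplicial spaces, and then apply \cref{pointwise weak equiv realize to weak equiv}. By \cref{hocolim model}, $\hocolim_\cC F = |B_\ast(\ast_\cC,\cC,F)|$. The triple $(\id, \id_\cC, \alpha)$ is a morphism $(\ast_\cC,\cC,F) \to (\ast_\cC,\cC,G)$ in $\Mod$. Applying $B_\ast$ therefore gives a simplicial map $B_\ast(\alpha)$, and $\hocolim_\cC \alpha$ is by definition its realisation.

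First, check that $B_\ast(\alpha)$ is a levelwise weak equivalence. Since $\ob{\cC}$ is discrete, the $m$-simplices decompose as a disjoint union
\[
    B_m(\ast_\cC,\cC,F) = \coprod_{c_0,\dots,c_m} \cC(c_1,c_0)\times\dots\times\cC(c_m,c_{m-1})\times F(c_m),
\]
where the mapping spaces carry the subspace topology from $\mor{\cC}$. In each summand, $B_m(\alpha)$ is the identity on the morphism factors and $\alpha_{c_m}$ on the last factor. A product of a weak equivalence with an identity is a weak equivalence, because homotopy groups of $k$-products are products of homotopy groups. A coproduct of weak equivalences is also a weak equivalence. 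Hence each $B_m(\alpha)$ is a weak equivalence. These spaces are Hausdorff $k$-spaces, being closed subspaces of $\mathbb{X}\times(\mor{\cC})^m\times\mathbb{Y}$ cut out by equalities of source and target in the discrete object space.

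Second, verify Reedy cofibrancy; I expect this to be the main obstacle. The latching object $L_{m+1}$ is the union of the images of the degeneracies, which insert identities. In the summand decomposition above, $L_{m+1}B$ is the subspace where at least one morphism coordinate lies in the identity image $\ob{\cC}\to\mor{\cC}$. By the standing assumption, this identity inclusion is a closed cofibration. So, summand by summand, the inclusion is a union of subspaces of the form $\cC(c_1,c_0)\times\dots\times\{1\}\times\dots\times F(c_m)$ inside the product. This is the union of products of the closed cofibration pairs $(\cC(c_i,c_i),\{1\})$ with the remaining factors. By the product lemma for closed cofibrations (Steenrod/Strøm: if $A\subset X$ and $B\subset Y$ are closed cofibrations, then $A\times Y\cup X\times B\subset X\times Y$ is one, valid in $k$-spaces), iterated over the coordinates, $L_{m+1}B\subset B_{m+1}$ is a closed cofibration. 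No condition on $F$ is needed, since the last factor enters only as a product factor. The care needed here is that only the factors with $c_i=c_{i-1}$ contribute, which is harmless.

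Finally, \cref{pointwise weak equiv realize to weak equiv} applies to $B_\ast(\alpha)$ and shows that $|B_\ast(\alpha)| = \hocolim_\cC\alpha$ is a weak equivalence. Alternatively, one can cite \cite[Proposition~4.7, Remark~4.9]{Dugger2008} directly, as the paper indicates.
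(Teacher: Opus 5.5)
Your argument is correct, but it does not follow the paper, which gives no argument of its own and simply cites \cite[Proposition~4.7 and Remark~4.9]{Dugger2008}.

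You instead prove the lemma with the paper's internal tools:
\begin{enumerate}[(i)]
\item the levelwise statement follows from decomposing $B_m(\ast_\cC,\cC,F)$ into summands $\cC(c_1,c_0)\times\dots\times\cC(c_m,c_{m-1})\times F(c_m)$;
\item Reedy cofibrancy follows from the product theorem for closed cofibrations (NDR pairs), applied to the standing hypothesis on identities;
\item \cref{pointwise weak equiv realize to weak equiv} finishes the proof.
\end{enumerate}
Step (ii) is the same kind of check the paper carries out in the appendix for $B_\ast(F,C_H,\ast_H)$.

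What your route buys is coverage of the lemma in the generality in which it is stated in \cref{double bar construction section}, where morphism spaces may be topologised. The cited source works with diagrams indexed by ordinary small categories. There the degenerate simplices of the bar construction form a union of coproduct summands, so Reedy cofibrancy is automatic. That discrete case is all the paper actually needs: the lemma is only invoked in the proof of \cref{boeksted lemma ref}, over $\{d\}\times\bI_{>n}$ and $\bI_{>n}\times\bI_{>n}$. So the citation is shorter, while your proof is more self-contained and more general.

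One small step you should make explicit. The hypothesis only says that $\ob{\cC}\to\mor{\cC}$ is a closed cofibration, but you use it for each $\{1_c\}\subset\cC(c,c)$. This holds for two reasons:
\begin{enumerate}[(i)]
\item $\mor{\cC}=\coprod_{c,d}\cC(c,d)$ is a decomposition into clopen pieces, since $\ob{\cC}$ is discrete and $s,t$ are continuous;
\item a closed cofibration $A\subset X$ restricts to a closed cofibration $A\cap U\subset U$ for clopen $U$. For $x\in U$, the path $t\mapsto \mathrm{pr}_X\, r(x,t)$ given by a retraction $r\colon X\times I\to X\times 0\cup A\times I$ starts at $x$ and hence stays in $U$.
\end{enumerate}
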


\subsection{Topological crossed modules and their classifying spaces}\label{new model subsection}
The treatment of lifting obstructions of group actions on C*-algebras in \cite{pacheco2025gkernelscrossedmodules} relies on the important concept of the topological crossed module.
\begin{defn}[Top.\ crossed module]
A \emph{topological crossed module} $(G, H, \partial, \cdot)$ consists of 
\begin{itemize}
    \item two topological groups $G, H$
    \item a continuous group homomorphism $\partial \colon H \to G$
    \item a continuous group action $\cdot \colon G \times H \to H$ of $G$ on $H$
\end{itemize} subject to the following two relations:
\begin{enumerate}[i)]
    \item $\partial(g \cdot h) = g \partial(h)g^{-1}$ for all $(g, h) \in G \times H$.
    \item $\partial(h_1) \cdot h_2 = h_1h_2h_1^{-1}$ for all $h_1, h_2 \in H$.
\end{enumerate}
We will either denote a crossed module by $(G,H,\partial,\cdot)$ or by 
\[
    \CrossMod{G}{H}{\partial}.
\]
\end{defn}

Let $A$ be a unital $C^*$-algebra. There are several topological crossed modules associated to it.

\begin{ex*}
    Endow $\Aut(A)$ with the point-norm topology and $U(A) \subset A$ with the subspace topology. Then, there is a topological crossed module given by
    \[
        \CrossMod{\Aut(A)}{U(A)}{\Ad}
    \]
    where $\Ad(u)(a) = uau^*$ for $u \in U(A)$ and the action is given by $\text{ev}$ with $\text{ev}(\alpha, u) = \alpha(u)$ for $\alpha \in \Aut(A)$ and $u \in U(A)$.
\end{ex*}

\begin{ex*}
    Let $\Aut(A)$ and $U(A)$ be the topological groups from the last example and let $PU(A) = U(A)/ZU(A)$, where $ZU(A)$ is the center of $U(A)$. Equipped with the quotient topology $PU(A)$ is a topological group. Let $p \colon U(A) \to PU(A)$ be the quotient homomorphism. We have the crossed module
    \[
        \CrossMod{\Aut(A)}{PU(A)}{\Ad \circ p},
    \]
    where the action is given by $\text{ev}'$ with $\text{ev}'(\alpha, [u]) = [\alpha(u)]$ for $\alpha \in \Aut(A)$ and $[u] \in PU(A)$.
\end{ex*}

\begin{ex*}
    Let $A$ be a unital $C^*$-algebra such that $U(A)$ is connected. Let $\Aut(A)$ be as in the previous examples and let $\wcU(A)$ be the universal cover of $U(A)$ with covering projection $\widetilde{p} \colon \wcU(A) \to U(A)$. We have the topological crossed module
    \[
        \CrossMod{\Aut(A)}{\wcU(A)}{\Ad \circ \widetilde{p}},
    \]
    where the action is given by $\widetilde{\text{ev}}$ with $\widetilde{\text{ev}}(\alpha, \widetilde{u}) = \alpha( \widetilde{u})$, where $\alpha \in \Aut(A)$. Here, we understand $\widetilde{u} \in \wcU(A)$ as in \cref{def:PUA_wcUA} and $\alpha$ is applied point-wise.
\end{ex*}

It is well-known (see for example \cite[Section~3.3]{Noohi2007}) that a crossed module naturally gives rise to a monoidal category in the following way.
 \begin{const}\label{def: topological strict monoidal cat associated to topological crossed module}
    Let $(G, H, \partial, \cdot)$ be a topological crossed module. Define the topological strict monoidal category $\cG$ as follows:
    \begin{itemize}
        \item $\ob{\cG} = G$ and $\mor{\cG} = H \times G$.
        \item The pair $(h,g_1) \in H \times G$ provides a morphism $g_1 \to g_2$, where $g_2 = \partial(h)g_1$. 
        \item The composition is given by $(h_2, \partial(h_1)g) \circ (h_1, g) = (h_2h_1, g)$.
        \item The strict monoidal structure is given by a functor $\ostar \colon \cG \times \cG \to \cG$ given by $(g_1, g_2) \mapsto g_1g_2$ on objects and by $((h_1, g_1), (h_2, g_2)) \mapsto (h_1(g_1 \cdot h_2), g_1g_2)$ on morphisms with unit the neutral element $e_G \in G$. 
    \end{itemize}
\end{const}

\begin{defn}\label{strict monoidal cats from crossed modules} 
For our main examples of topological crossed modules we introduce the following convenient notation for the associated monoidal categories:
    \begin{center}
    \begin{tabular}{ccr}
        $\CrossMod{\Aut(A)}{U(A)}{\Ad}$ & $\rightsquigarrow$ & $\cG_A$ \\
        $\CrossMod{\Aut(A)}{PU(A)}{\Ad \circ p}$ & $\rightsquigarrow$ & $P\cG_A$ \\
        $\CrossMod{\Aut(A)}{\wcU(A)}{\Ad \circ \widetilde{p}}$ & $\rightsquigarrow$ & $\wcG_A$
    \end{tabular}
    \end{center}
\end{defn}

There are several natural ways to associate simplicial spaces to topological $2$-categories and therefore several ways of defining classifying spaces. We recall two of them that play a crucial role in  \cite{pacheco2025gkernelscrossedmodules}.

\begin{defn}[Pennig/Izumi/Gir\'on-Pacheco] \label{def:classifying_space}
       Let $(G, H, \partial, \cdot)$ be a topological crossed module and denote by $\cG$ the topological strict monoidal category it induces.
    \begin{enumerate}[(1)]
        \item  We may view $\cG$ as a $2$-category with one object. 
        Let $N^{D}\cG$ be the Duskin nerve of $\cG$ described in \cite[Section~4]{pacheco2025gkernelscrossedmodules}. Define
        \[
            \cB^D\cG := |N^D\cG|\ .
        \]
        \item Let $N\cG$ be the nerve of $\cG$ when seen as a topological $1$-category. Since $G, H \in \ob{\Top}$, it follows that $|N\cG|$ is a topological group by \cref{crossed module cat gives top group after realization}. Define
        \[
            \cB^\otimes\cG := \norm{N|N\cG|}\ .
        \]
    \end{enumerate}
\end{defn}

It was shown in \cite[Theorem~4.9]{pacheco2025gkernelscrossedmodules} that $\cB^D\cG$ and $\cB^\otimes\cG$ are weakly equivalent if in the crossed module $(G,H,\partial,\cdot)$ the group $H$ is well-pointed. We now present a third model (under mild assumptions weakly equivalent to the other two) that is closely related to $\cB^\otimes\cG$. It has very appealing properties as a functor that the other two do not. 

\begin{defn} \label{defn:double_nerve}
        Let $(G, H, \partial, \cdot)$ be a topological crossed module with $G, H \in \ob{\Top}$. Let $\cG$ be the topological strict monoidal category it induces. Define 
        \[
            \cB^2\cG := |N|N\cG||\ .
        \]
\end{defn}

We observe the following:

\begin{lemma}\label{lem:nerve_of_crossed_mod_and_bar_construction}
       Let $(G, H, \partial, \cdot)$ be a topological crossed module with $G, H \in \ob{\Top}$. Let $\cG$ denote the category it induces. Let $\cC_H$ be the one-point topological category associated to the topological group $H$ and consider the functor $F \colon C_H^{\op} \to \Top$ which takes the object of $C_H^{\op}$ to $G$ and which takes the morphism $h^{\op}$ to the continuous map $G \to G$ with $g \mapsto \partial(h^{-1})g$. Let $\ast_H \colon C_H \to \Top$ be the constant functor at the one-point space. Then, there is an isomorphism of simplicial spaces:
       \[
            N_\ast\cG \xrightarrow{\cong} B_\ast(F, C_H, \ast_H)\ .
       \]
\end{lemma}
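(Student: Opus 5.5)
The plan is to write down an explicit levelwise map and check that it is a homeomorphism commuting with faces and degeneracies. By the Remark on nerves, $N_0\cG = \ob{\cG} = G$. For $m \geq 1$, $N_m\cG$ is the subspace of $(\mor{\cG})^m = (H\times G)^m$ of composable strings
\[
    g_0 \xrightarrow{(h_1,g_0)} g_1 \xrightarrow{(h_2,g_1)} \cdots \xrightarrow{(h_m,g_{m-1})} g_m,
\]
where $g_i = \partial(h_i)g_{i-1}$. Such a string is determined by $g_0$ and $(h_1,\dots,h_m)$, and the map $(g_0,h_1,\dots,h_m)\mapsto$ (string) is continuous with continuous inverse given by projection. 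So $N_m\cG \cong G \times H^m$. On the other side, $C_H$ has a single object, so $\mathbb{X}=G$, $\mathbb{Y}=\ast$, the composability conditions are vacuous, and $B_m(F,C_H,\ast_H) = G\times H^m$. The candidate isomorphism will therefore be of the form $(g_0,h_1,\dots,h_m)\mapsto (x,k_1,\dots,k_m)$. The choice of $x$ and of the ordering and inversion of the $k_i$ is fixed by matching the face maps.

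In \cref{defn:bar}, $d_0$ applies $X(f_1)$ to $x$ and erases $f_1$; here $X(h^{\op})(g) = \partial(h^{-1})g$. The last face $d_m$ simply erases $f_m$, since $Y$ is trivial. On the nerve side, $d_0$ drops the first arrow, so the new source is $g_1 = \partial(h_1)g_0$, and $d_m$ drops the last arrow, leaving $g_0$ unchanged. Because $d_m$ leaves $x$ unchanged, $x$ must correspond to the source of an arrow that is not dropped by $d_m$. The natural first guess is $x = g_0$ with the $h_i$ taken in reverse order (the bar construction writes composition as $f_i \circ f_{i-1}$). However, $d_0$ in the bar construction then changes $x$ to $\partial(k_1^{-1})x$, while $d_0$ on the nerve changes $g_0$ to $\partial(h_1)g_0$. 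This forces $k_1 = h_1^{-1}$. The resulting assignment is
\[
    (g_0;h_1,\dots,h_m)\mapsto (g_0;h_1^{-1},\dots,h_m^{-1}),
\]
with the order of the $h_i$ chosen so that inner faces match. Concretely, the nerve face $d_i$ composes the two arrows to $(h_{i+1}h_i, g_{i-1})$, and under inversion this becomes $h_i^{-1}h_{i+1}^{-1}$. This must agree with the bar-side product of adjacent entries in the convention $f_i\circ f_{i-1}$, read in $C_H$ or $C_H^{\op}$ as appropriate. The compatibility check is therefore the identity $(h_{i+1}h_i)^{-1} = h_i^{-1}h_{i+1}^{-1}$.

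Degeneracies insert identities on both sides, since the identity of $g$ in $\cG$ is $(e_H,g)$, and this is immediate. The map is continuous with continuous inverse (inversion in $H$ and the formula for the $g_i$ are continuous), so it is an isomorphism of simplicial spaces. All spaces involved are finite products of objects of $\Top$ taken with the $k$-product, so the levelwise homeomorphisms are homeomorphisms in $\Top$.

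The main obstacle is bookkeeping. The variance conventions (right modules as contravariant functors, the $\op$ on $C_H$, the composition order in \cref{defn:bar}, and the choice of sending $h$ to $\partial(h^{-1})$) must all line up. If the straightforward assignment fails on $d_0$ or $d_m$, I would instead use $x = g_m$ with the $h_i$ reversed. I expect exactly one of these two conventions to make all faces match.
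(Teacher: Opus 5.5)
Your proposal is correct and takes the same route as the paper. You identify both $N_m\cG$ and $B_m(F,C_H,\ast_H)$ with $G\times H^m$ and check an explicit levelwise homeomorphism against faces and degeneracies.

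Your formula $(g_0;h_1,\dots,h_m)\mapsto(g_0;h_1^{-1},\dots,h_m^{-1})$ is the one that actually works with the stated $F(h^{\op})=\partial(h^{-1})\,\cdot$. It differs from the paper's displayed $\mu_n$. There the first coordinate $\partial(h_1^{-1}\cdots h_{n-1}^{-1})g_n$ simplifies to the first source $g_1$, and the $h_i$ are left uninverted. So already for $n=1$ the formulas disagree: the bar face $d_0$ of $\mu_1(h_1,g_1)$ is $\partial(h_1^{-1})g_1$, while $\mu_0$ of the nerve face $d_0$ is $\partial(h_1)g_1$. Inner faces also disagree when $H$ is non-abelian. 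Your inversion of the $h_i$ is exactly what makes the check go through.

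You should commit rather than hedge, though.
\begin{itemize}
\item The condition $s(f_i)=t(f_{i+1})$ in \cref{defn:bar} forces the inner face to compose $f_i\circ f_{i+1}$, i.e.\ $k_ik_{i+1}$ in $C_H$. The paper's ``$f_i\circ f_{i-1}$'' is an index slip, and the opposite order would in general break $d_0d_1=d_0d_0$ for the contravariant $F$. With this reading, your identity $(h_{i+1}h_i)^{-1}=h_i^{-1}h_{i+1}^{-1}$ is precisely the required check, with no ``$C_H$ or $C_H^{\op}$ as appropriate'' ambiguity.
\item The order of the $h_i$ is forced by $d_0$, not chosen.
\item Your fallback $x=g_m$ with the $h_i$ reversed cannot work. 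The bar face $d_m$ fixes $x$, while the nerve face $d_m$ replaces the last object $g_m$ by $g_{m-1}$; your own remark about $d_m$ already shows this.
\end{itemize}
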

\begin{proof}
    By \cref{defn:bar}, we have equalities (products are taken in $\Top$) for all $n \geq 0$: 
    \[
        B_{n}(F, C_H, *) = G \times H^{n} \times \ast\ .
    \]
    For $n > 0$, we have a homeomorphism 
    \begin{align*}
        \mu_{n} \colon N_n \cG & \to B_n(F, C_H, \ast_H)\\
        ((h_1, g_1),(h_2, g_2),\dots,(h_n, g_n))&\mapsto ( \partial(h_1^{-1}h_2^{-1}\dots h_{n-1}^{-1})g_n, (h_1,h_2,\dots,h_n), \ast)\ ,
    \end{align*} 
    where $(h_i,g_i) \colon g_i \to g_{i+1}$. The element $h_1^{-1}h_2^{-1}...h_{n-1}^{-1}$ is taken to be the identity of~$H$ if $n = 1$.
     
    For $n = 0$, there is the homeomorphism $\mu_0 \colon N_0\cG \to B_0(F, C_H, \ast_H)$ given by $(1_H,g) \mapsto (g, \ast, \ast)$. One then easily checks that this family of maps indeed constitutes a natural transformation.
\end{proof}

We improve \cref{strict monoidal realizes to top monoid} in the following sense.

\begin{lemma}\label{crossed module cat gives top group after realization}
       Let $(G, H, \partial, \cdot)$ be a topological crossed module with $G, H \in \ob{\Top}$. Let $\cG$ denote the category it induces.  Then $\cB\cG$ is a topological group.
\end{lemma}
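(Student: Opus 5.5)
By \cref{strict monoidal realizes to top monoid}, $\cB\cG$ is already a topological monoid. The product is the composite
\[
\cB\cG \times \cB\cG \cong \cB(\cG\times\cG) \xrightarrow{\cB\ostar} \cB\cG ,
\]
using \cref{B commutes with products}, with unit the vertex $e_G$. So the remaining task is to build a continuous inverse. I will do this at the simplicial level: I will construct a continuous functor $\iota\colon \cG \to \cG$ that inverts objects and morphisms for $\ostar$. Then $\cB\iota$ inverts points, because everything is computed simplexwise.

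First I will define $\iota$. On objects put $\iota(g) = g^{-1}$. A morphism $(h,g)\colon g \to \partial(h)g$ should go to a morphism $g^{-1} \to g^{-1}\partial(h)^{-1}$. Using $\partial(g^{-1}\cdot h^{-1}) = g^{-1}\partial(h)^{-1}g$, the natural candidate is
\[
\iota(h,g) = (g^{-1}\cdot h^{-1},\, g^{-1}).
\]
Its target is $\partial(g^{-1}\cdot h^{-1})g^{-1} = g^{-1}\partial(h)^{-1}$, which is correct.

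Next I will check that $\iota$ is a functor. Composition gives
\[
\iota(h_2,\partial(h_1)g)\circ\iota(h_1,g) = \bigl((g^{-1}\partial(h_1)^{-1})\cdot h_2^{-1}\bigr)\,(g^{-1}\cdot h_1^{-1}).
\]
Write $g^{-1}\partial(h_1^{-1}) = \partial(g^{-1}\cdot h_1^{-1})g^{-1}$ and apply the Peiffer identity ii). This turns the expression into $(g^{-1}\cdot h_1^{-1})(g^{-1}\cdot h_2^{-1})$, which equals $g^{-1}\cdot(h_2h_1)^{-1}$, as required. Continuity of $\iota$ follows from continuity of inversion, of the action, and of $\partial$.

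Then I will check that $\iota$ inverts for $\ostar$, i.e. that $\ostar\circ(\id,\iota)$ and $\ostar\circ(\iota,\id)$ equal the constant functor at the identity $(1_H,e_G)$. For the first,
\[
(h,g)\ostar\iota(h,g) = \bigl(h\,(g\cdot(g^{-1}\cdot h^{-1})),\, e\bigr) = (1,e).
\]
For the second,
\[
\iota(h,g)\ostar(h,g) = \bigl((g^{-1}\cdot h^{-1})(g^{-1}\cdot h),\, e\bigr) = (1,e).
\]
These are strict identities of functors $\cG\to\cG$, so after applying $N$ and $|\cdot|$ they give strict identities of maps $\cB\cG\to\cB\cG$. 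Combined with \cref{B commutes with products}, this shows that $\mu\circ(\id,\cB\iota)$ and $\mu\circ(\cB\iota,\id)$ are the constant map at the basepoint. Hence $\cB\iota$ is a continuous inverse, and $\cB\cG$ is a topological group.

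The only real subtlety is the functoriality check for $\iota$, which uses both crossed module axioms. The topological points (the Hausdorff $k$-space condition and products commuting with realisation) are already covered by the proof of \cref{strict monoidal realizes to top monoid}.
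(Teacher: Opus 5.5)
Your proposal is correct and is essentially the paper's proof: the paper defines the inversion as the simplicial map $i\colon N\cG\to N\cG$ given slotwise by your formula $(h,g)\mapsto(g^{-1}\cdot h^{-1},g^{-1})$ (so $i=N\iota$). It then checks $N(\ostar)\circ(i\times 1)\circ\Delta=c$ and $N(\ostar)\circ(1\times i)\circ\Delta=c$ before realising and applying \cref{B commutes with products}. Your explicit functoriality check via the two crossed module axioms supplies the verification that $i$ is simplicial, which the paper leaves implicit.
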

\begin{proof}
    From \cref{strict monoidal realizes to top monoid}, we know that $\cB\cG$ is a topological monoid. We need to define the group inversion. This can be done on the level of the nerve. Identifying each $n$-simplex of the nerve as a chain of $n$ composable morphisms in $\cG$, we can define a simplicial map $i \colon N\cG \to N\cG$ via:
    \begin{align*}
        i_n \colon N_n\cG &\to N_n\cG\\
        ((h_1, g_1), \dots, (h_n, g_n)) &\mapsto ((g_1^{-1} \cdot h_1^{-1}, g_1^{-1}), \dots, (g_n^{-1} \cdot h_n^{-1}, g_n^{-1}))
    \end{align*} The geometric realization of $i$ gives a continuous map $\cB\cG \to \cB\cG$ which is the inversion of the group. To see this, we use \cref{B commutes with products} and the fact that the following diagrams commute in the category of simplicial spaces 
    \[
    \begin{tikzcd}[cramped]
	{N\cG} && {N\cG \times N\cG} && {N\cG} \\
	&& {N\cG}
	\arrow["{i \times 1 \circ \Delta}", from=1-1, to=1-3]
	\arrow["c"', from=1-1, to=2-3]
	\arrow["{N(\ostar)}"{description}, from=1-3, to=2-3]
	\arrow["{1 \times i \circ \Delta}"', from=1-5, to=1-3]
	\arrow["c"', from=1-5, to=2-3]
    \end{tikzcd}
    \] 
    where $\Delta$ is the diagonal map $N\cG \to N\cG \times N\cG$ and $c$ is the map $N\cG \to N\cG$ such that $c_n \colon N_n\cG \to N_n\cG$ is constant at $((1_H, 1_G),...,(1_H, 1_G)) \in N_n\cG$.
\end{proof}

The main advantage of \cref{lem:nerve_of_crossed_mod_and_bar_construction} is that we can now impose cofibrancy conditions on $H$ and $G$, so that instead of the fat geometric realization we can use the thin one which has better categorical properties.

\begin{lemma}\label{weak equiv between pennig constr and B^2}
    Let $\cG = (G, H, \partial,\cdot)$ be a topological crossed module such that \begin{enumerate}
        \item $H, G \in \ob{\Top}$.
        \item The inclusions of the identities $1_H \hookrightarrow H$ and $1_G \hookrightarrow G$ are cofibrations.
    \end{enumerate}
    Then,
    \begin{enumerate}
        \item $\cB\cG$ is a well-pointed Hausdorff $k$-group.
        \item $\cB^2\cG$ is a Hausdorff $k$-space.
        \item The canonical map 
        \[
            B^{\otimes}\cG \to \cB^2\cG
        \]
        is a weak equivalence.
    \end{enumerate}
\end{lemma}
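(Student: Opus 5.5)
The plan is to work throughout with the description of the nerve from \cref{lem:nerve_of_crossed_mod_and_bar_construction}, namely $N_\ast\cG \cong B_\ast(F, C_H, \ast_H)$, so that $N_n\cG \cong G \times H^n$ with degeneracies given by inserting $1_H$.

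For (1), each $N_n\cG \cong G\times H^n$ is a $k$-product of Hausdorff $k$-spaces, hence lies in $\Top$. The latching inclusion $L_{n+1}N\cG \subseteq N_{n+1}\cG$ is identified with
\[
G \times \{(h_1,\dots,h_{n+1}) : \text{some } h_i = 1_H\} \subseteq G \times H^{n+1}.
\]
Since $\{1_H\}\hookrightarrow H$ is a closed cofibration (closed because $H$ is Hausdorff), the product lemma for NDR-pairs (Steenrod/Strøm) shows that this fat-wedge inclusion is a closed cofibration. Hence $N\cG$ is Reedy cofibrant. The realisation of a simplicial Hausdorff $k$-space is again a Hausdorff $k$-space; for Reedy cofibrant objects it is built by pushouts along closed cofibrations, which preserve the Hausdorff property. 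By \cref{crossed module cat gives top group after realization}, $\cB\cG$ is a topological group, and \cref{B commutes with products} makes it a group in $\Top$. For well-pointedness, note that the basepoint is the image of $1_G \in N_0\cG$, and $\{1_G\}\hookrightarrow G = N_0\cG$ is a cofibration. The skeletal filtration of $|N\cG|$ consists of closed cofibrations, so $\{1_G\}\to \mathrm{sk}_0 \to |N\cG|$ is a composite of cofibrations.

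For (2), apply the same reasoning to the simplicial space $N(\cB\cG)$ with $N_n = (\cB\cG)^n$. Its latching object is the fat wedge with respect to the basepoint $e$. Since $\cB\cG$ is well-pointed by (1), this inclusion is again a closed cofibration. Thus $N\cB\cG$ is a Reedy cofibrant simplicial Hausdorff $k$-space, and its thin realisation $\cB^2\cG$ lies in $\Top$.

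For (3), $\cB^{\otimes}\cG = \norm{N|N\cG|}$, and the canonical map is the fat-to-thin comparison $\norm{N\cB\cG}\to|N\cB\cG|$. Since $N\cB\cG$ is Reedy cofibrant with levels in $\Top$ by (2), \cref{fat thin weak equiv for reedy cofib} shows that this map is a weak equivalence.

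The main obstacle is the point-set step in (1): checking that the latching inclusion really is the fat wedge under the isomorphism $\mu_n$, and verifying that $|N\cG|$ stays Hausdorff. For the latter I would invoke the standard result (May, \emph{Geometry of iterated loop spaces}, Sec.~11) that realisations of Reedy cofibrant simplicial Hausdorff $k$-spaces are Hausdorff $k$-spaces.
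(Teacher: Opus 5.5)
Your proposal is correct and follows essentially the same route as the paper. The paper likewise gets Reedy cofibrancy of $N\cG \cong B_\ast(F, C_H, \ast_H)$ and of $N\cB\cG$ from the product theorem for cofibrations. It obtains Hausdorffness and the closed-cofibration skeletal filtration from the latching pushouts, deduces well-pointedness from $G \cong |N\cG|^{0} \hookrightarrow |N\cG|$, and proves (3) by the fat-to-thin comparison for Reedy cofibrant simplicial spaces.
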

\begin{proof}
    In the appendix.
\end{proof}

\subsection{\texorpdfstring{$\bI$}{I}-FCPs, \texorpdfstring{$\Gamma$}{Gamma}-spaces and an infinite loop space machine} As mentioned in the introduction, the main goal of this paper is to show that ${\cB^D\cG_A, \cB^DP\cG_A}$ and ${\cB^D\wcG}_A$ all admit infinite loop space structures. 

To achieve this we first construct $\bI$-FCPs (see \cref{equiv characterization of commutative monoid}) and then use these to get $\Gamma$-spaces using \cite[Construction~12.1]{Lind_2013}. As described for example in \cite[Definition~3.6]{MAY1978205} (based on earlier results in \cite{segal74}) there is then an infinite loop space machine that takes $\Gamma$-spaces to weak $\Omega$-spectra. This passage is arranged so that any one of our three spaces of interest, i.e.\ ${\cB^D\cG_A, \cB^DP\cG_A}$ or ${\cB^D\wcG}_A$, is weakly equivalent to the $0$-th space of the resulting weak $\Omega$-spectrum,  which implies that they naturally admit infinite loop space structures.

\begin{defn}[The category $\bI$]
    Let $[n] := \{1,2,3,...,n\}$ ($n \in \mathbb{N}$)  and $[0] := \emptyset$. 
    The category $\bI$ is determined by:
    \begin{itemize}
        \item $\ob{\bI} = \{[n] : n \in \N_0\}$.
        \item The set of morphisms $\mor{\bI}$ consists of all injective functions $f \colon [m] \to [n]$ for $m, n \in \N_0$. Their composition is just the ordinary one and the identities are the identity functions $1_{[n]}$.
    \end{itemize}
\end{defn}
Moreover, $\bI$ is a symmetric monoidal with respect to the operation $\oplus \colon \bI \times \bI \to \bI$ given by $[m] \oplus [n] := [m + n]$ on objects. The morphisms $f \colon [m] \to [n]$ and $g \colon [k] \to [l]$ map to $f \oplus g \colon [m + k] \to [n + l]$ given by $f$ on the first $m$ elements of $[m + k]$ and by $g + l$ on the last $k$ elements of $[m + k]$. 

\begin{defn}[$\bI$-FCP]\label{equiv characterization of commutative monoid}
    A functor $F \colon \bI \to \Top$ is called an $\bI$-FCP  if and only if there is a natural transformation $\mu_{[m], [n]} \colon F([m]) \times F([n]) \to F([m] \oplus [n])$ ($m, n \geq 0$) and a morphism $\eta \colon \ast \to F([0])$ ($\ast$ is the one-point space) such that the following diagrams (with obvious unmarked arrows) commute: 
    \[
    \begin{tikzcd}[cramped]
	   {(F([m]) \times F([n])) \times F([k])} & {F([m]) \times  (F([n]) \times F([k]))} \\
	   {F([m] \oplus [n]) \times F([k])} & {F([m]) \times F([n] \oplus [k])} \\
	   {F(([m] \oplus [n]) \oplus [k])} & {F([m] \oplus ([n] \oplus [k]))}
	   \arrow[from=1-1, to=1-2]
	   \arrow[from=1-1, to=2-1]
	   \arrow[from=1-2, to=2-2]
	   \arrow[from=2-1, to=3-1]
	   \arrow[from=2-2, to=3-2]
	   \arrow[from=3-1, to=3-2]
    \end{tikzcd}
    \] 
    and 
    \[
    \begin{tikzcd}[cramped]
	   {\ast \times F([m])} & {F([0]) \times F([m])} && {F([m]) \times \ast} & {F([m]) \times F([0])} \\
	   {F([m])} & {F([0] \oplus [m])} && {F([m])} & {F([m] \oplus [0])}
	   \arrow["{\eta \times 1}", from=1-1, to=1-2]
	   \arrow[from=1-1, to=2-1]
	   \arrow["{\mu_{[0], [m]}}", from=1-2, to=2-2]
	   \arrow["{1 \times \eta}"', from=1-4, to=1-5]
	   \arrow[from=1-4, to=2-4]
	   \arrow["{\mu_{[m], [0]}}"', from=1-5, to=2-5]
	   \arrow[from=2-2, to=2-1]
	   \arrow[from=2-5, to=2-4]
    \end{tikzcd}
    \] 
    and
    \[
    \begin{tikzcd}[cramped]
	   {F([m]) \times F([n])} & {F([n]) \times F([m])} \\
	   {F([m] \oplus [n])} & {F([n] \oplus [m])}
	   \arrow[from=1-1, to=1-2]
	   \arrow["{\mu_{[m], [n]}}"', from=1-1, to=2-1]
	   \arrow["{\mu_{[n], [m]}}", from=1-2, to=2-2]
	   \arrow[from=2-1, to=2-2]
    \end{tikzcd}
    \]
\end{defn}

As a consequence of \cite[Proposition~22.1]{MMSS01}, an $\bI$-FCP $F$ is the same as a commutative monoid in $\Top^{\bI}$ with respect to the Day convolution.

From an $\bI$-FCP, it is possible to construct a $\Gamma$-space. These originated in the work of Segal (see \cite{segal74}). We give the (equivalent) formulation used in \cite{Lind_2013}.

\begin{defn}[$\Gamma$-space] 
\label{def:GammaSpace}
    Let $\Gamma^{\op}$ be the category of finite pointed sets $n^{+} := \{0, 1,2,...,n\}$ for $n \geq 1$ and $0^{+} := \{0\}$. A morphism $n^{+} \to m^{+}$ is a basepoint-preserving function $f \colon n^{+} \to m^{+}$, i.e.\ $f$ satisfies $f(0) = 0$. 
    A \emph{$\Gamma$-space} is a (covariant) functor $A \colon \Gamma^{\op} \to \Top$ such that:
    \begin{itemize}
        \item $A(0^{+})$ is weakly contractible.
        \item For all $n$, the map $A(n^+) \to A(1^+)^{n}$, induced by the maps $i_{k} \colon n^+ \to 1^+$ where $i_k(k) = 1$ and $i_k \equiv 0$ otherwise, is a weak equivalence.
    \end{itemize}
\end{defn}

Let $A$ be a discrete abelian group and define $A(n^+) = A^n$ with $A(0^+) = 1$. For a morphism $f \colon n^+ \to m^+$ let $A(f) \colon A^n \to A^m$ be given by
\[
    A(f)(a_1, \dots, a_n) = \left( \prod_{j \in f^{-1}(1)} a_j,\prod_{j \in f^{-1}(2)} a_j, \dots, \prod_{j \in f^{-1}(m)} a_j \right)\ .
\]
This is a (discrete) $\Gamma$-space (or a $\Gamma$-set). With this example in mind we can view general $\Gamma$-spaces as homotopical generalisations of abelian groups. The following proposition goes back to Schlichtkrull \cite[Section~5.2]{Schlichtkrull2004}. We will not need the full details and refer the reader to the exposition in \cite[Construction~12.1]{Lind_2013}.

\begin{prop}[$\Gamma$-space machine]\label{Linds gamma space machine}
    There is a functor from the category of $\bI$-FCPs to the category of $\Gamma$-spaces. The functor associates to each $\bI$-FCP $F$ a $\Gamma$-space $X_F$ with 
    \[
        X_F(0^+) = \ast \qquad \text{and} \qquad X_F(1^+) = \hocolim_\bI F\ .
    \]
\end{prop}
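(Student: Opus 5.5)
The plan is to write down Schlichtkrull's construction explicitly, following \cite[Construction~12.1]{Lind_2013}, and check the two required identities together with the $\Gamma$-space axioms. For a finite pointed set $n^+$, consider the category $\bI^{n}$ and the functor $F^{\times n} \colon \bI^n \to \Top$, $([m_1],\dots,[m_n]) \mapsto F([m_1]) \times \dots \times F([m_n])$. Set
\[
    X_F(n^+) := \hocolim_{\bI^n} F^{\times n},
\]
using the bar construction model of \cref{hocolim model}; for $n=0$ the category $\bI^0$ is terminal and the empty product is $\ast$, so $X_F(0^+)=\ast$. For $n=1$ the definition literally gives $\hocolim_\bI F$.

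The functoriality in $f \colon n^+ \to m^+$ comes from the FCP structure. For each $j\in\{1,\dots,m\}$ we use the functor $\bI^n \to \bI$, $([k_i])_i \mapsto \bigoplus_{i \in f^{-1}(j)} [k_i]$, with the empty sum being $[0]$. Together these give a functor $f_\ast \colon \bI^n \to \bI^m$. We then need a natural transformation $F^{\times n} \Rightarrow F^{\times m}\circ f_\ast$. It multiplies the factors with indices in $f^{-1}(j)$ using the iterated $\mu$, ordered by the natural order on $f^{-1}(j)$. Factors with $i$ in $f^{-1}(0)$ are discarded, and $\eta$ is inserted when $f^{-1}(j)=\emptyset$. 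By functoriality of $B_\ast$ on $\Mod$, such a pair induces a map of homotopy colimits. Associativity and unitality of $\mu$ and $\eta$ give $(g\circ f)_\ast = g_\ast \circ f_\ast$ on the nose. Compatibility of the transformations under composition may involve reordering factors, and here commutativity of $\mu$ (the symmetry square) is used. Functoriality in $F$ is clear, since maps of FCPs induce natural maps of the transformations.

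Next come the $\Gamma$-space axioms. $X_F(0^+)=\ast$ is contractible. For the Segal map, the composite $X_F(n^+) \to X_F(1^+)^n$ is the map
\[
    \hocolim_{\bI^n} \textstyle\prod_i F\circ \pi_i \to \prod_i \hocolim_\bI F
\]
induced by the projections $i_k$, which send the other factors to the basepoint. Using the bar construction, $B_\ast(\ast,\bI^n,\prod_i F\pi_i)$ is isomorphic as a simplicial space to $\prod_i B_\ast(\ast,\bI,F)$, since $\bI$ is discrete. By \cref{B commutes with products} this realises to a homeomorphism, in the spirit of \cref{behavior of bar const with projections}.

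The main obstacle is this last identification: we must check that the Segal map really coincides with the product decomposition of the realisation. We also need the diagonal simplicial structure of $N(\bI^n)\cong (N\bI)^n$ to match the realisation of the product. This is exactly where \cref{B commutes with products} is needed, together with the discreteness of $\bI$.
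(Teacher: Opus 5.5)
\textbf{Gap: your construction is not strictly functorial.} You assert that $(g\circ f)_\ast = g_\ast\circ f_\ast$ holds on the nose by associativity and unitality of $\mu,\eta$. But $f_\ast\colon\bI^n\to\bI^m$ is a functor between indexing categories and involves neither $\mu$ nor $\eta$. It composes strictly on objects, where only cardinalities are added. It does not compose strictly on morphisms, because $\oplus$ on $\bI$ is symmetric monoidal with a non-identity symmetry, namely the block permutations.

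Here is a concrete failure. Let $f\colon 2^+\to 2^+$ be the transposition and $g\colon 2^+\to 1^+$ the fold map, so $gf=g$. Then $(gf)_\ast(\phi_1,\phi_2)=\phi_1\oplus\phi_2$, whereas $g_\ast f_\ast(\phi_1,\phi_2)=\phi_2\oplus\phi_1$. For $\phi_1\colon[1]\to[2]$, $1\mapsto 1$, and $\phi_2=\id_{[1]}$ these are two different injections $[2]\to[3]$. These morphisms occur as entries of simplices of $B_\ast(\ast,\bI,F)$, and nothing in the realisation identifies them, so $X_F(g)\circ X_F(f)\neq X_F(gf)$.

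Commutativity of $\mu$ cannot repair this. It only gives $\mu(x_2,x_1)=F(\chi)\mu(x_1,x_2)$ for the block permutation $\chi$, so the two composites differ by a natural isomorphism and are merely homotopic. No other choice of orderings of the fibres helps either: compatibility of the fold map with all permutations of $n^+$ would force every permutation to be order preserving, i.e.\ the identity.

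A quick sanity check confirms this. In your model the Segal maps are homeomorphisms, so a strict functor would make $\hocolim_\bI F$ a strictly commutative monoid under the product induced by $\oplus$ and $\mu$. Already for $F=\ast$ this would make $\cB\bI$ strictly commutative, which fails for the reason above. What you have built is only a pseudofunctor $\Gamma^{\op}\to\Top$.

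The paper gives no proof of its own; it cites Schlichtkrull's construction in the form of Lind's Construction~12.1, which exists precisely to avoid this problem. Instead of $\bI^n$ one indexes over a category of sum diagrams: roughly, assignments $U\mapsto\theta(U)$ of finite sets to subsets $U\subseteq\{1,\dots,n\}$ that take disjoint unions to disjoint unions. On this category $f_\ast\theta=\theta\circ f^{-1}$ is strictly functorial in $f$. The structure maps use the \emph{unordered} product $\prod_{s\in f^{-1}(t)}F(\theta(\{s\}))\to F(\theta(f^{-1}(t)))$, which is well defined precisely because $F$ is commutative; this is where commutativity genuinely enters.

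The price is that the Segal maps are no longer homeomorphisms. One shows they are weak equivalences because restriction to singletons is an equivalence of categories onto $\bI^n$ (after passing to the equivalent category of all finite sets and injections), and homotopy colimits are invariant under such equivalences. Your treatment of $X_F(0^+)$, $X_F(1^+)$ and the product decomposition over $\bI^n$ is fine, but that is not the main obstacle; strict functoriality is.
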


The idea now is to apply an appropriate infinite loop space machine to a $\Gamma$-space and hence obtain a weak $\Omega$-spectrum. First, we recall their definition.

\begin{defn}
    A \emph{weak $\Omega$-spectrum} is a sequence of based spaces $E_{n}$ ($n \geq 0$) and a sequence of based weak equivalences $E_{n} \xrightarrow{\simeq} \Omega E_{n+1}$. It is called \emph{connective} if each $E_{n}$ is $(n-1)$-connected. An \emph{infinite loop space} is any based space which is weakly equivalent to the $0$-th space of a weak $\Omega$-spectrum.
\end{defn}

Since we can shift an $\Omega$-spectrum $(E_n)_{n \in \N_0}$, it turns out that all of its spaces $E_n$ are infinite loop spaces. The weak $\Omega$-spectra constitute a category. A morphism of $\Omega$-spectra $(E_n) \to (E'_n)$ is a sequence of based maps $E_n \to E'_n$ that make the following diagrams commute:
\[
    \begin{tikzcd}[cramped]
	{E_n} & {E'_n} \\
	{\Omega E_{n+1}} & {\Omega E'_{n+1}}
	\arrow[from=1-1, to=1-2]
	\arrow[from=1-1, to=2-1]
	\arrow[from=1-2, to=2-2]
	\arrow[from=2-1, to=2-2]
    \end{tikzcd}
\]

For the passage from $\Gamma$-spaces to weak $\Omega$-spectra we follow \cite{MAY1978205}. Suppose that $A$ is a $\Gamma$-space with $A(0^+) = \ast$. In this case all spaces $A(n^+)$ have a basepoint given by the image of  $\ast = A(0^+) \to A(n^+)$ induced by the unique map $0^+ \to n^+$. As pointed out by Segal in \cite{segal74} we have a natural functor $\iota \colon \Delta^{\op} \to \Gamma^{\op}$ mapping $[n]$ to $n^+$. A morphism $f \colon [m] \to [n]$ is mapped to $f^* \colon n^+ \to m^+$, which sends all $i \in n^+$ with $f(j-1) < i \leq f(j)$ to $j$ and the rest to $0$. By abuse of notation we denote the geometric realisation $\lvert A \circ \iota \rvert$ by just $\lvert A \rvert$. In \cite{MAY1978205} the authors consider $\Gamma$-spaces that satisfy the additional cofibrancy condition \cite[Definition 1.2 (3)]{MAY1978205}. Let $\Gamma\text{-spaces}^{\text{cof}}$ be the corresponding subcategory of $\Gamma$-spaces. In \cite[Definition 3.6]{MAY1978205} they then define a functor 
\[
    E \colon \Gamma\text{-spaces}^{\text{cof}} \to \text{weak }\Omega\text{-Spectra}
\]
and a natural group completion map $\kappa \colon A(1^+) \to E(A)_0$. Given a $\Gamma$-space $A$ with $A(0^+) = \ast$ we define a new $\Gamma$-space by
\[
    \cB A(n^+) = \lvert m^+ \mapsto A(n^+ \wedge m^+) \rvert
\]
and refer the reader to \cite[Construction 3.4]{MAY1978205} for the definition of the smash product of finite pointed sets. In case $A \in \ob{\Gamma\text{-spaces}^{\text{cof}}}$ the spaces in the $\Omega$-spectrum $E(A)$ are obtained by iterating this procedure and evaluating at $1^+$. From \cite[Definition~1.5 and  Proposition~1.6]{MAY1978205}, we also have a functor 
\[
    W \colon \Gamma\text{-spaces} \to \Gamma\text{-spaces}^{\text{cof}}
\]
from our notion of $\Gamma$-spaces to theirs, which is a pointwise weak equivalence.

Let $F$ be an $\bI$-FCP. Then \cref{Linds gamma space machine} gives a $\Gamma$-space $X_F$ with $X_F(0^+) = \ast$ and $X_F(1^+) = \hocolim_{\bI}F$. We refer to \cref{double bar construction section} for the definition of $\hocolim_{\bI}$. The $\Gamma$-space structure gives rise to maps
\[
    (\hocolim_{\bI}F)^2 = X_F(1^+)^2 \xleftarrow{\simeq} X_F(2^+) \to X_F(1^+) = \hocolim_{\bI}F
\]
which give $\pi_0(\hocolim_{\bI}F)$ the structure of an abelian monoid. Putting everything together we now have:

\begin{prop}\label{inf loop space machine used}
    If the monoid $\pi_0(\hocolim_{\bI}F)$ is a group, then there is a connective weak $\Omega$-spectrum $(E_n)_{n \in \N_0}$ defined by $E = E(W X_F)$ together with a weak equivalence $\hocolim_{\bI}F \simeq E_0$. In particular, $E_0$ is an infinite loop space.
\end{prop}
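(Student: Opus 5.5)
This proposition is an assembly of results already recalled, so the plan is to chain \cref{Linds gamma space machine}, the whiskering functor $W$ and the May--Thomason functor $E$, keeping track of the zeroth space. First, apply \cref{Linds gamma space machine} to $F$ to obtain the $\Gamma$-space $X_F$. It satisfies $X_F(0^+) = \ast$ and $X_F(1^+) = \hocolim_\bI F$, and the Segal maps $X_F(n^+) \to X_F(1^+)^n$ are weak equivalences. Next, apply $W$ from \cite[Definition~1.5 and Proposition~1.6]{MAY1978205}. This gives a $\Gamma$-space $WX_F$ in $\Gamma\text{-spaces}^{\text{cof}}$ together with a pointwise weak equivalence relating it to $X_F$. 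In particular $WX_F(1^+) \simeq \hocolim_\bI F$. Since a pointwise weak equivalence preserves $\pi_0$ together with its monoid structure (the structure maps are natural, and the inverse Segal map is natural up to homotopy), $\pi_0(WX_F(1^+)) \cong \pi_0(\hocolim_\bI F)$ as abelian monoids. By hypothesis this monoid is a group.

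Then set $E = E(WX_F)$, using \cite[Definition~3.6]{MAY1978205}. This is a weak $\Omega$-spectrum whose spaces are $E_n = \cB^n(WX_F)(1^+)$, obtained by iterated delooping. Connectivity is the standard fact that each application of $\cB$ raises connectivity by one. The realisation $|m^+ \mapsto A(n^+ \wedge m^+)|$ has a single point as its space of $0$-simplices, because $A(0^+) = \ast$, so it is connected. Iterating, $E_n$ is $(n-1)$-connected. This uses that the simplicial spaces involved are proper/Reedy cofibrant, which is exactly what membership in $\Gamma\text{-spaces}^{\text{cof}}$ guarantees.

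The key step is identifying $E_0$. May--Thomason provide a natural group completion map $\kappa \colon WX_F(1^+) \to E_0$, whose target is $\Omega \cB(WX_F)(1^+)$ up to weak equivalence. By the group completion theorem (Segal's \cite[Proposition~4.1]{segal74}), when $\pi_0$ of a special $\Gamma$-space is already a group, the map $A(1^+) \to \Omega|A \circ \iota|$ is a weak equivalence, since the group completion is then trivial. This is the main point to verify carefully: one has to check that the hypotheses of Segal's theorem hold for $WX_F$ (specialness and cofibrancy). Both are provided by $W$. Composing the zigzag $\hocolim_\bI F \simeq WX_F(1^+) \xrightarrow{\kappa,\ \simeq} E_0$ gives the claimed weak equivalence. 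Therefore $\hocolim_\bI F$, and also $E_0$, is an infinite loop space by definition.
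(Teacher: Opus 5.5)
Your proposal is correct and follows essentially the same route as the paper. Both arguments pass from $X_F$ to $WX_F$, use naturality to see that the pointwise weak equivalence $WX_F(1^+) \to X_F(1^+) = \hocolim_{\bI}F$ induces a monoid isomorphism on $\pi_0$, and conclude that the group completion map $\kappa \colon WX_F(1^+) \to E_0$ is a weak equivalence because $\pi_0$ is already a group. The only differences are that the paper cites \cite[Lemma~2.2]{MAY1978205} directly for this last step instead of Segal's group completion result, and that your connectivity sketch for the $E_n$ is extra detail the paper leaves to the May--Thomason construction.
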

\begin{proof}
    Since $X_F(0^+) = \ast$, we can view $X_F$ as a functor to based topological spaces. By abuse of notation we will continue to denote this functor by $X_F$. 
    From the above, we have that $WX_F(1^+) \xrightarrow{\simeq } X_F(1^+) = \hocolim_{\bI}F$ is a weak equivalence. By the functoriality of $W$ and the above, the induced map $\pi_0(WX_F(1^+)) \to \pi_0(X_F(1^+))$ is a monoid isomorphism. By construction, $\pi_0(X_F(1^+)) = \pi_0(\hocolim_{\bI}F)$ as monoids. Hence, $\pi_0(WX_F(1^+))$ is a group isomorphic to $\pi_0(\hocolim_{\bI}F)$. It follows by \cite[Lemma~2.2]{MAY1978205} that the natural group completion $WX_F(1^+) \to E_0(WX_F)$ is a weak equivalence.
\end{proof}


\section{Topological properties of unitary groups of C*-algebras} \label{section 2}
In this section we collect results about the topological structure of $\Aut(A)$, $U(A)$, $PU(A)$ and $\wcU(A)$ for a strongly self-absorbing $C^*$-algebra $A$. In particular, we will discuss their well-pointedness. As a reminder, a topological group is called well-pointed whenever the inclusion of its neutral element is a cofibration.




\begin{lemma}\label{big topological lemma for ua and pua}
    Let $A$ be a separable unital SSA C*-algebra. Then:
    \begin{enumerate}[(1)]
        \item $U(A)$ is well-pointed.
        \item $PU(A)$ is well-pointed.
        \item The quotient map $U(A) \to PU(A)$ is a principal $S^1$-bundle.
        \item The map $U(A) \to U(A \otimes A) : u \mapsto u \otimes 1$ is a weak equivalence.
        \item The map $PU(A) \to PU(A \otimes A) : [u] \mapsto [u \otimes 1]$ is a weak equivalence.
    \end{enumerate}
\end{lemma}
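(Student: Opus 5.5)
I would prove the five items in the order (1), (3), (2), (4), (5), since each feeds into the next.

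For (1), $U(A)$ is an open subset of the Banach space $A$, hence a Banach manifold, and in particular metrizable, so a Hausdorff $k$-space. Around $1_A$ there is the chart $a \mapsto \exp(ia)$ from a small ball of self-adjoint elements. Using this chart I would write down an explicit Strøm structure: a function $\phi(u) = \min(1, \norm{u - 1}/\varepsilon)$ together with a homotopy $H(u,t)$ that shrinks $u = \exp(ia)$ along $\exp(i s a)$ inside the $\varepsilon$-ball and is stationary outside it. This gives an NDR pair $(U(A), \{1\})$. Alternatively one can cite that a point in a metrizable ANR (a Banach manifold) is a closed cofibration.

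For (3), $A$ is simple, so $ZU(A) = U(1)$. I would construct a local section of $p \colon U(A) \to PU(A)$ near $[1]$. Pick a state-like continuous functional, for example a faithful tracial state $\tau$ if one exists, or more robustly any state $\omega$ with $\omega(1) = 1$. Then, for $u$ near the circle $U(1)\cdot 1$, set $s([u]) = \overline{\lambda(u)}\, u/|\lambda(u)|$, where $\lambda(u) = \omega(u)$. This is well defined on classes, since rescaling $u$ by $z \in U(1)$ rescales $\omega(u)$ by the same $z$, and it is continuous where $\omega(u) \neq 0$. That set is an open saturated neighbourhood of $p^{-1}[1]$. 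Translating by group elements gives local trivialisations, so $p$ is a principal $U(1)$-bundle. Consequently $PU(A)$ is locally homeomorphic to a neighbourhood in $U(A)$ modulo a smooth circle action. Hence it is also a metrizable Banach manifold (the quotient of a Banach Lie group by a closed central Lie subgroup), which gives the $k$-space property.

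For (2), well-pointedness follows exactly as in (1), using the manifold chart induced through the section $s$. Equivalently, the pair $(PU(A), [1])$ is the image of an NDR pair under the local homeomorphism $s$.

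For (4), this is the standard consequence of strong self-absorption. By \cref{defn of ssa algebra}, $\psi$ is approximately unitarily equivalent to $d \mapsto d \otimes 1$, and in fact (Toms–Winter/Dadarlat–Winter) the first factor embedding $\ell \colon A \to A\otimes A$ is strongly asymptotically unitarily equivalent to an isomorphism. I would show that $U(\ell)$ induces isomorphisms on all homotopy groups. Take a map $f \colon S^k \to U(A\otimes A)$. Since $\psi^{-1}$ is an isomorphism, $f$ lifts up to homotopy: I would use the continuous path $u_t$ to homotope $\psi\circ\psi^{-1}\circ f$ to $\ell\circ\psi^{-1}\circ f$. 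Conjugating by $u_t$ is a homotopy through unitaries for $t<1$, and by compactness of $S^k$ the convergence is uniform on the image. The closing step uses that $U$ of an open ball around a unitary is contractible, which handles maps that are close. Injectivity is proved the same way with $S^k\times I$ in place of $S^k$. This uniform-approximation argument is where I expect the main technical work to lie: one has to make sure that norm-close maps are homotopic rel basepoint, which holds via the logarithm.

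For (5), I would compare the long exact homotopy sequences of the $U(1)$-bundles from (3) for $A$ and for $A\otimes A$. These are connected by the map of bundles $u\mapsto u\otimes 1$, which is the identity on the fibre $U(1)$. By (4) and the five lemma, the induced map on $\pi_k$ is an isomorphism for $k\ge 1$. For the $\pi_0$ and $\pi_1$ edge cases I would use that $p$ induces a bijection $\pi_0 U(A) \cong \pi_0 PU(A)$ because the fibre is connected, and apply the five lemma at the low end with care.
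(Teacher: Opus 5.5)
Your proposal is correct. For (1), (4) and (5) it is essentially the paper's argument: a logarithm-based Str\o m structure near $1$, compactness plus the logarithm to homotope $\psi\circ g$ to the first-factor embedding, and the long exact sequences of the two $U(1)$-bundles. Your extra care at $\pi_0,\pi_1$ is warranted there.

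The real difference is in (2) and (3).
\begin{itemize}
\item The paper builds its local section as $[x]\mapsto e^{-i\max\spec\theta(x)}x$ on $q(V')$. It proves (2) independently, via $H'([x],t)=[e^{i(1-t)\theta(x)}]$ and $\mathrm{dist}(x,U(1))$. Both constructions need a separate check that $\theta(x)-\theta(x')$ is a scalar whenever $\lambda x=\lambda' x'$.
\item Your section $s([u])=\overline{\omega(u)}\,u/|\omega(u)|$, built from an arbitrary state $\omega$, makes well-definedness a one-line consequence of $\omega(zu)=z\,\omega(u)$. It lives on the larger saturated open set $\{\omega\neq 0\}$. It is continuous because the open map $q$ restricted to an open saturated set is a quotient map.
\item Your (2) then follows by transporting the deformation of $U(A)$ through $s$; note that $s([1])=1$. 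A suitable Str\o m function is $\min(1,\norm{s([u])-1}/\varepsilon)$ for $\varepsilon\le\rho$, extended by $1$ off $\{\omega\neq 0\}$. This extension is continuous because $\norm{s([u])-1}<\varepsilon$ forces $|\omega(u)|>1-\varepsilon$.
\end{itemize}
So your route to (2) and (3) is shorter and more elementary, at the cost of making (2) depend on (3).

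Two small points. First, $U(A)$ is closed, not open, in $A$; only metrizability and the chart $a\mapsto e^{ia}$ are actually used, so nothing breaks. Second, with the path from \cref{defn of ssa algebra} one only gets $\psi\circ g\simeq \Ad_{u_0}\circ\ell\circ g$. You can either invoke the strong version with $u_0=1$ that you cite, or, as the paper does, read off $\pi_k(\ell)=\pi_k(\Ad_{u_0^*}\circ\psi)$ from this based homotopy, which is natural in $g$. The second option also makes your separate injectivity argument over $S^k\times I$ unnecessary.
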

\begin{proof}
    In the appendix.
\end{proof}

As a corollary of the first point and of the group structure on $U(A)$, $U(A)$ is always locally contractible. Thus, if $U(A)$ is connected, a universal cover of $U(A)$ exists. Next we show that all of the spaces involved in our crossed modules are Hausdorff $k$-spaces.

\begin{lemma}\label{main groups are all hausdorff k-groups}
    Let $A$ be a separable unital SSA C*-algebra. Then, $U(A), PU(A)$ and $\Aut(A)$ are Hausdorff $k$-spaces. If $U(A)$ is connected, then the universal cover $\wcU(A)$ is also a Hausdorff $k$-space.
\end{lemma}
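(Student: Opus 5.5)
The plan is to reduce everything to metrizability. Every metrizable space is first countable, hence a $k$-space, and it is Hausdorff. So it suffices to show that each of $U(A)$, $PU(A)$, $\Aut(A)$ and (when $U(A)$ is connected) $\wcU(A)$ is metrizable.

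\emph{Step 1: $U(A)$.} $U(A)$ is a subspace of the Banach space $A$ with the norm topology. It is therefore metrizable via $d(u,v)=\norm{u-v}$.

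\emph{Step 2: $PU(A)$.} Since $A$ is SSA it is simple, so the centre is $U(1)=\mathbb{T}\cdot 1_A$, a compact subgroup acting isometrically on $U(A)$ by multiplication. For a compact group acting by isometries, the orbit-space quotient is metrized by
\[
    \bar d([u],[v]) = \min_{z\in\mathbb{T}} \norm{u - zv}.
\]
One checks three things. First, this is a metric: it is symmetric and satisfies the triangle inequality using invariance, and $\bar d=0$ forces $u\in\mathbb{T}v$ by compactness. Second, the open $\bar d$-balls are images of open balls under the quotient map, which is open because it is a quotient by a group action. Third, $\bar d$ therefore induces exactly the quotient topology. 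Alternatively, I would invoke the standard fact that a quotient of a metrizable topological group by a closed subgroup is metrizable (Birkhoff–Kakutani applied to the first countable Hausdorff group $PU(A)$). The same argument shows $PU(A)$ is Hausdorff, since $\mathbb{T}$ is closed.

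\emph{Step 3: $\Aut(A)$.} $A$ is separable, so I would fix a dense sequence $(a_n)$ in the unit ball and set
\[
    d(\alpha,\beta)=\sum_n 2^{-n}\bigl(\norm{\alpha(a_n)-\beta(a_n)}+\norm{\alpha^{-1}(a_n)-\beta^{-1}(a_n)}\bigr).
\]
Because automorphisms are isometric, pointwise convergence on a dense set is equivalent to pointwise convergence everywhere. Hence $d$ induces the point-norm topology; the inverse terms are harmless since inversion is continuous in the point-norm topology on automorphisms of a $C^*$-algebra. Even simpler, I would only need the first sum to obtain a compatible metric.

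\emph{Step 4: $\wcU(A)$.} Use the model $\wcU(A)=\mathcal{P}_{U(A)}/\Omega_0U(A)$ from \cref{def:PUA_wcUA}. $\mathcal{P}_{U(A)}$ carries the sup-metric, and $\Omega_0U(A)$ is a closed normal subgroup. It is closed because it is a union of path components of the loop group, which are open since $\Omega U(A)$ is locally path connected, $U(A)$ being locally contractible by \cref{big topological lemma for ua and pua}. So $\wcU(A)$ is a Hausdorff first countable topological group, hence metrizable by Birkhoff–Kakutani. Alternatively: $\wcU(A)\to U(A)$ is a covering map onto a metrizable space, so $\wcU(A)$ is Hausdorff and locally metrizable, thus first countable, hence a $k$-space.

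The main obstacle is Step 4. The delicate point is making sure the quotient topology on $\mathcal{P}_{U(A)}/\Omega_0U(A)$ agrees with the covering-space topology and is Hausdorff, i.e.\ that $\Omega_0U(A)$ is closed. I would handle this with the covering-space argument, which only needs local properties inherited from $U(A)$.
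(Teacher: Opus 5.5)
Your proposal is correct. For $U(A)$, $\Aut(A)$ and $\wcU(A)$ it is essentially the paper's argument: metrizable, or first countable, plus Hausdorff gives a $k$-space. For $\wcU(A)$ the paper uses exactly your covering-space fallback: $p$ is a local homeomorphism onto a metrizable space, which gives first countability, and Hausdorffness is inherited from the base.

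The genuine difference is $PU(A)$. The paper proves only Hausdorffness there: the relation $\{(u,v): v^{-1}u\in U(1)\}$ is closed, and $q\times q$ is a quotient map because $q$ is open and surjective. It then invokes the result that a Hausdorff quotient of a $k$-space is a $k$-space. You instead metrize $PU(A)$ directly by $\bar d([u],[v])=\min_{z\in\mathbb{T}}\norm{u-zv}$, or via Birkhoff--Kakutani. This gives the stronger conclusion that $PU(A)$ is metrizable and avoids the $k$-space quotient theorem. The only check is compatibility with the quotient topology. This holds because the $\bar d$-ball of radius $r$ about $[u]$ is exactly the image under $q$ of the norm ball of radius $r$ about $u$, and $q$ is an open quotient map.

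On the obstacle you flag in Step 4: the paper defines the topology on $\mathcal{P}_{U(A)}/\Omega_0U(A)$ by transporting the covering-space topology along the bijection with $\wcU(A)$ (see \cref{universal-cover of group}). So your covering argument is the right one, and no comparison of topologies is needed. Your first route, through the quotient topology from the sup-metric and Birkhoff--Kakutani, would require the extra identification of the two topologies. That identification is true here, using continuous local sections $x\mapsto[t\mapsto e^{it\theta(x)}]$ near $1$ and translating them, but it is additional work.
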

\begin{proof}
$U(A)$ has the subspace topology of $A$ and is, therefore, metrizable. All metrizable spaces are Hausdorff $k$-spaces \cite[Proposition~2.2]{Steenrod1967}. $\Aut(A)$ is given the point norm topology. As $A$ is unital and separable, $\Aut(A)$ is also metrizable and hence a Hausdorff $k$-space for the same reason as before. We have the quotient map $q \colon U(A) \to PU(A)$. 
$PU(A)$ is Hausdorff because the equivalence relation generated by $q$ is the subset $\{(u, v) \in U(A) \times U(A) : v^{-1}u \in U(1)\}$ which is clearly closed in $U(A) \times U(A)$ because $U(1)$ is closed in $U(A)$. This implies that the diagonal of $PU(A)$ is closed in $PU(A) \times PU(A)$ because the product $q \times q \colon U(A) \times U(A) \to PU(A) \times PU(A)$ is a quotient map as $q$ is open and surjective.
It follows by \cite[Proposition~2.6]{Steenrod1967} that $PU(A)$ is a Hausdorff $k$-space.  

If $U(A)$ is connected, then it has a universal cover as it is locally contractible. We have the covering $p \colon \wcU(A) \to U(A)$. As $U(A)$ is first countable because it is metrizable, it follows from the fact that $p$ is a local homeomorphism that $\wcU(A)$ is first countable. It is clear that $\wcU(A)$ is Hausdorff. By \cite[Proposition~2.2]{Steenrod1967}, $\wcU(A)$ is a Hausdorff $k$-space.
\end{proof}

The following can also easily be checked using \cref{big topological lemma for ua and pua} and the techniques from the proof of \cref{weak equiv between pennig constr and B^2} in the appendix.

\begin{lemma}
    Let $A$ be a separable unital C*-algebra. Then, $\cB^2U(A)$ and $\cB^2PU(A)$ are Hausdorff $k$-spaces. If $U(A)$ is connected, then $\cB^2\wcU(A)$ is a Hausdorff $k$-space.
\end{lemma}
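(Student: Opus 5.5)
The plan is to reduce the statement to item (2) of \cref{weak equiv between pennig constr and B^2}, applied to suitable crossed modules. Here $\cB^2U(A)$ should be read as $\cB^2$ of the crossed module $1 \to U(A)$ with trivial action (or $U(A) \to 1$, depending on convention); in either case the associated category has object and morphism spaces built from $U(A)$ and the trivial group. The same applies to $PU(A)$ and $\wcU(A)$. That lemma needs two things: the two groups in the crossed module are objects of $\Top$, and the inclusions of their identity elements are cofibrations. With these, it gives that $\cB\cG$ is a well-pointed Hausdorff $k$-group and that $\cB^2\cG = |N\cB\cG|$ is a Hausdorff $k$-space.

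The steps, in order, are as follows.
\begin{enumerate}
\item By \cref{main groups are all hausdorff k-groups}, $U(A)$ and $PU(A)$ are Hausdorff $k$-spaces, and so is $\wcU(A)$ when $U(A)$ is connected. The trivial group is trivially a Hausdorff $k$-space.
\item Well-pointedness of $U(A)$ and $PU(A)$ comes from parts (1) and (2) of \cref{big topological lemma for ua and pua}.
\item For $\wcU(A)$, the covering map $\widetilde{p}\colon \wcU(A) \to U(A)$ is a local homeomorphism. Cofibrancy of a point inclusion is local: one uses a Str\o m pair $(u,h)$ supported in a neighbourhood. So an NDR-pair structure for $1 \in U(A)$ can be lifted to one for $1 \in \wcU(A)$. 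Concretely, I would restrict the NDR data to an evenly covered neighbourhood, lift it to the sheet through the identity, and extend by $u=1$ outside.
\item Apply \cref{weak equiv between pennig constr and B^2}(2) to each crossed module.
\end{enumerate}

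The main obstacle is a hypothesis mismatch: the lemma as stated assumes only that $A$ is separable unital, while \cref{big topological lemma for ua and pua} is stated for SSA algebras. I would therefore check whether well-pointedness of $U(A)$ holds for any unital $C^*$-algebra. It should: $U(A)$ is an open subset of a Banach space's unit sphere region, in fact a Banach Lie group, so it is locally contractible via the exponential map. Then $\{1\}\subset U(A)$ is an NDR pair, using $u(v)=\min(1,\norm{v-1}/\varepsilon)$ and the straight-line homotopy through $\exp$ of the logarithm near $1$. The same argument passes to $PU(A)$ using local sections of the $U(1)$-bundle, in the spirit of part (3) of \cref{big topological lemma for ua and pua}.

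If this fails, the fallback is to prove the statement under the SSA hypothesis, which is the only case used later. In that case the remaining work is just the appendix argument, namely the Reedy cofibrancy of $N\cB\cG$ and closedness of the simplicial levels, which the lemma already packages.
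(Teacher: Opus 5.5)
Your overall route (verify the hypotheses of \cref{weak equiv between pennig constr and B^2} via \cref{main groups are all hausdorff k-groups} and \cref{big topological lemma for ua and pua}, then invoke its item (2)) is what the paper's one-line proof has in mind. Your Step 3 correctly supplies the well-pointedness of $\wcU(A)$, which \cref{big topological lemma for ua and pua} does not cover: you lift the Str\o m data of $1\in U(A)$ to the sheet over an evenly covered neighbourhood, and the neighbourhood $V$ from the appendix works because it contracts within itself.

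The gap is in your very first step: you apply the lemma to the wrong crossed modules. In the paper, $\cB^2U(A)$, $\cB^2PU(A)$ and $\cB^2\wcU(A)$ are shorthand for $\cB^2\cG_A$, $\cB^2P\cG_A$ and $\cB^2\wcG_A$. These are the double nerves of $U(A)\to\Aut(A)$, $PU(A)\to\Aut(A)$ and $\wcU(A)\to\Aut(A)$ from \cref{strict monoidal cats from crossed modules}. That is why \cref{main groups are all hausdorff k-groups} includes $\Aut(A)$, and these are the spaces used in \cref{section 4}. With the trivial group in the $G$-slot you instead get $\cB^2$ of $1\to U(A)$, which is just the ordinary classifying space $|NU(A)|$ and plays no role in the paper. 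Your other option, $U(A)\to 1$, is not a crossed module at all: with $\partial$ trivial, the Peiffer identity $\partial(h_1)\cdot h_2=h_1h_2h_1^{-1}$ forces $H$ to be abelian.

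This is not a cosmetic difference. With $G=\Aut(A)$ in the point-norm topology, hypothesis (2) of \cref{weak equiv between pennig constr and B^2} also requires $\{\id\}\hookrightarrow\Aut(A)$ to be a cofibration. Your argument never addresses this, because in your version the $G$-slot is trivially well-pointed. Here is where it enters:
\begin{enumerate}
\item The levels $G\times H^n$ of $N\cG\cong B_\ast(F,C_H,\ast_H)$ are Reedy cofibrant using only well-pointedness of $H$, so $\cB\cG$ is a Hausdorff $k$-group.
\item To apply the appendix's Hausdorffness lemma to $\cB^2\cG=|N\cB\cG|$, you need $N\cB\cG$ to be Reedy cofibrant, i.e.\ $\cB\cG$ well-pointed.
\item The appendix derives that from well-pointedness of $G\cong|N\cG|^0$.
\end{enumerate}
$\Aut(A)$ is not a Banach Lie group, so your exponential-map argument gives nothing here. \cref{big topological lemma for ua and pua} is also silent on it, and the paper's sketch does not spell this point out either. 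To prove the intended statement you must either show that $\Aut(A)$ is well-pointed, or get well-pointedness of $\cB\cG$ (or Hausdorffness of $|N\cB\cG|$) some other way.

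A minor point: for non-simple $A$ one has $ZU(A)=U(Z(A))\neq U(1)$. So your proposed extension of the $PU(A)$ argument to arbitrary separable unital $A$ does not work as written. The intended setting is the SSA (in particular simple) one of \cref{big topological lemma for ua and pua}.
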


Recall our preferred model for $\wcU(A)$ from \cref{def:PUA_wcUA}. We make the following remarks about it in the general setting.

\begin{rem}\label{universal-cover of group}
Whenever $G$ is a path-connected locally path-connected and semi-locally simply-connected topological group, there is a very concrete description of~$\widetilde{G}$. 

Let $\mathcal{P}G := \{\gamma \in C(I ,G) : \gamma(0) = e\}$ be the group of continuous paths in $G$ starting at $e$ (with pointwise multiplication of paths) and let $\Omega_0G$ be the subset of $PG$ with $\gamma(1) = e$ and $\gamma$ is homotopic relative to $\partial I$ to the constant loop at $e$. Then, $\Omega_0G$ is  normal subgroup of $\mathcal{P}G$. There is an evident bijection $$\mathcal{P}G/\Omega_0G \cong \widetilde{G}$$ where $\widetilde{G}$ is as it is constructed in \cite{hatcher2001algebraic}. We can define a topology on $\mathcal{P}G/\Omega_0G$ by taking this bijection to be a homeomorphism. The multiplication and inversion maps of $\mathcal{P}G/\Omega_0G$ turn out to be continuous. The covering map $p_G \colon \mathcal{P}G/\Omega_0G \to G$ is given by $[\gamma] \mapsto \gamma(1)$.
\end{rem}

One of the benefits of this construction is its naturality.

\begin{rem}\label{functorial construction of universal cover}
The above construction $G \mapsto \widetilde{G}$ is functorial in the category of path-connected, locally path-connected and semi-locally simply connected topological groups and continuous group morphisms.

The covering maps $p_G \colon \widetilde{G} \to G$ constitute a natural transformation from the functor $G \mapsto \widetilde{G}$ to the identity functor $G \mapsto G$.
\end{rem}

\section{A proof of B\"okstedt's lemma}\label{section 3}

In this section we will prove a version of B\"okstedt's lemma (see \cite[Lemma~2.1]{Schlichtkrull2004}), which is used to identify $\cB^2\mathcal{G_A}, \cB^2P\cG_A, \cB^2\wcG_A$ with infinite loop spaces. For each $n \geq 0$, let $\bI_{ \geq n}$ be the full subcategory of $\bI$ which contains only objects $m \geq n$ and similarly for $\bI_{> n}$.
We will need the following, which is proven in \cite[Lemma~6.4]{Mandell_2002}.

\begin{lemma}\label{hocolim inclusion equiv}
    Let $F \colon \bI \to \Top$ be a functor. For each $n \geq 0$, the subcategory inclusion $\bI_{\geq n} \to \mathbb{\bI}$ induces a homotopy equivalence:
    \[
        \hocolim_{\bI_{\geq n}}F \xrightarrow{\simeq } \hocolim_{\bI}F\ .
    \]
\end{lemma}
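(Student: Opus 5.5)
The plan is a cofinality argument. The inclusion $\iota \colon \bI_{\geq n} \to \bI$ induces $\hocolim_{\bI_{\geq n}} (F\circ\iota) \to \hocolim_{\bI}F$. With the two-sided bar model of \cref{hocolim model}, this is the realisation of the simplicial map
\[
B_\ast(\ast, \bI_{\geq n}, F\iota) \to B_\ast(\ast,\bI,F).
\]
I would write both sides as homotopy colimits over the simplex category of the nerve, or equivalently use the comma-category criterion. The map is a weak equivalence (indeed a homotopy equivalence) once every comma category $(m \downarrow \iota)$, for $m\in \ob{\bI}$, has contractible classifying space.

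So the concrete steps are as follows.

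First, fix $[m]$ and consider the comma category $\mathcal{D}_m$. Its objects are injections $f\colon [m]\to[k]$ with $k\ge n$, and its morphisms $f\to f'$ are injections $g\colon[k]\to[k']$ with $g f=f'$. I need to show $\cB\mathcal{D}_m$ is contractible, and I would do this with the standard trick of natural transformations between endofunctors, which realise to homotopies.

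Take the fixed object $c=(j\colon[m]\to[m+n])$, the inclusion of the first $m$ elements; note $m+n\ge n$. Define the endofunctor $\Phi(f\colon[m]\to[k]) = (f\oplus 1\colon [m] \to [k]\oplus[m+n])$, where the target is $[k+m+n]$ and $f$ is followed by the inclusion of the first summand. There are natural transformations $\mathrm{id}\Rightarrow\Phi$, given by inclusion of the first summand, and $\mathrm{const}_c \Rightarrow \Phi'$, where $\Phi'(f) = (j \text{ followed by second-summand inclusion})$. To make these compatible, replace $\Phi$ by $\Psi(f)=$ the map $[m]\to[k+m+n]$ sending $i\mapsto k+i$. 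Then both $\mathrm{id}\Rightarrow\Psi$, via $[k]\to[k+m+n]$ with the last $m+n$ slots used differently, and $c\Rightarrow\Psi$ need injections commuting with the structure maps. The component $f\to\Psi(f)$ must be an injection $g\colon[k]\to[k+m+n]$ with $g f = \Psi(f)$. One defines $g$ to send $f(i)\mapsto k+i$ and the remaining $k-m$ elements of $[k]$ injectively into $[k]$, order-preservingly. This is natural in $f$ only up to care, and checking strict naturality is the main obstacle. Alternatively I would use that $\mathcal{D}_m$ is filtered-like (any two objects map to a common one, and parallel morphisms are not coequalised), so the safer route is to cite the known result that $(m\downarrow\iota)$ has contractible nerve. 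This is exactly \cite[Lemma~6.4]{Mandell_2002}, via the Quillen Theorem A type criterion.

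Second, combine this with the cofinality theorem for homotopy colimits (e.g.\ \cite[Section~10]{Dugger2008} or Hirschhorn) to get a weak equivalence. To upgrade it to a homotopy equivalence, note that both bar constructions are Reedy cofibrant, since the degeneracies are inclusions of summands because $\mor{\bI}$ is discrete. A genuine homotopy inverse can then be built from the shift functor $-\oplus[n]\colon \bI\to\bI_{\geq n}$ together with the natural transformation $\mathrm{id}\Rightarrow -\oplus[n]$. These induce maps on $\hocolim$ and, after composing with $F$ applied to the inclusion maps, yield homotopies to the identity on both sides. I expect the bookkeeping that $F$ applied to the natural inclusion gives a natural transformation $F\Rightarrow F(-\oplus[n])$ compatible with the bar construction to be the delicate step. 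It is, however, standard, and it is precisely how the cited lemma is proven.
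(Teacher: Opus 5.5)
The paper does not prove this lemma; it only cites \cite[Lemma~6.4]{Mandell_2002}. Your proposal does contain a complete, self-contained proof, but that proof is the shift-functor argument in your last paragraph, not the cofinality plan, and it makes the cofinality step unnecessary.

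Here is the shift argument. Take $\phi=-\oplus[n]\colon\bI\to\bI_{\geq n}$, and let $\eta_k\colon[k]\to[k+n]$ include the first $k$ elements. Then $\eta\colon\mathrm{id}_{\bI}\Rightarrow\iota\phi$ is natural, because $f\oplus 1_{[n]}$ restricts to $f$ on $[k]$. So $(\phi,F\eta)$ is by definition a morphism $(\ast,\bI,F)\to(\ast,\bI_{\geq n},F\iota)$ in $\Mod$, and nothing delicate happens there. The two composites with $\iota_\ast$ are induced by $(\iota\phi,F\eta)$ and by $(\phi\iota,(F\iota)\eta')$, where $\eta'\colon\mathrm{id}_{\bI_{\geq n}}\Rightarrow\phi\iota$ is the restriction of $\eta$; this uses that $\bI_{\geq n}$ is full.

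The one ingredient you should state and prove is the following. For $\psi\colon\cC\to\cC$ and $\eta\colon\mathrm{id}\Rightarrow\psi$, the self-map of $\hocolim_\cC F$ induced by $(\psi,F\eta)$ is homotopic to the identity. To see this, define $H\colon\cC\times[1]\to\cC$ by $H(-,0)=\mathrm{id}$, $H(-,1)=\psi$ and $H(1_c,0\to1)=\eta_c$. Let $\theta\colon F\pi_1\Rightarrow FH$ be the identity over $0$ and $F\eta$ over $1$. Since $B_\ast(\ast,\cC\times[1],F\pi_1)\cong B_\ast(\ast,\cC,F)\times N[1]$ exactly as in \cref{behavior of bar const with projections}, the pair $(H,\theta)$ realises to a homotopy $\hocolim_\cC F\times\Delta^1\to\hocolim_\cC F$ between the two maps.

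Compared with the paper's bare citation, this route is elementary. It gives an honest homotopy equivalence for arbitrary $F$, with no cofibrancy hypotheses. Applied to the constant functor at a point, it also gives the equivalence $\cB\bI_{>n}\simeq\cB\bI$ used in the proof of \cref{boeksted lemma ref}.

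The cofinality half, as written, should be dropped or repaired, for three reasons.
\begin{enumerate}
\item The cofinality theorems you cite are stated for weak equivalences. Reedy cofibrancy of the bar constructions does not upgrade a weak equivalence to a homotopy equivalence when the spaces $F([k])$ are arbitrary.
\item You leave the contractibility of $(m\downarrow\iota)$ open and attribute it to \cite[Lemma~6.4]{Mandell_2002}. The paper cites that lemma for the statement itself, not for comma categories.
\item Your attempted zig-zag $\mathrm{id}\Rightarrow\Psi\Leftarrow\mathrm{const}_c$ in fact works with $\Psi(g)=g\oplus 1_{[m+n]}$, provided $g_f$ sends $f(i)\mapsto k+i$ and is the \emph{identity} on $[k]\setminus f([m])$. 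For $x\notin f([m])$ and $g\colon f\to f'$, injectivity gives $g(x)\notin f'([m])$, so both sides of the naturality square send $x$ to $g(x)$. Compressing the complement onto $\{1,\dots,k-m\}$ instead breaks naturality.
\end{enumerate}
Once the shift argument is in place, none of this is needed.
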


Using this, we can now prove B\"okstedt's lemma. The proof mimicks the proof of \cite[Lemma~2.5.1]{BRUN200029}.

\begin{thm}[B\"okstedt's lemma]\label{boeksted lemma ref}
    Assume that $X \colon \bI \to \Top$ is a functor such that there exists some integer  $n \geq 0$ with the property that $X(c) \to X(c')$ is a weak equivalence for all morphisms $c \to c'$ with $c, c' \in \bI_{>n}$. For any object $d \in \bI_{>n}$, there is a weak equivalence $$X(d)\xrightarrow{\simeq} \hocolim_{\bI}X$$
\end{thm}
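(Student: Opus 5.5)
We will follow the standard argument from \cite[Lemma~2.5.1]{BRUN200029}, using the two bar-construction lemmas and \cref{hocolim inclusion equiv}. Fix $d \in \bI_{>n}$. By \cref{hocolim inclusion equiv} it suffices to produce a weak equivalence between $X(d)$ and $\hocolim_{\bI_{>n}} X$, compatible with the inclusion of $X(d)$ into the homotopy colimit.

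The key device is the functor $d \oplus - \colon \bI \to \bI$, $c \mapsto d \oplus c$, which restricts to $\bI_{>n} \to \bI_{>n}$. There are natural transformations $\iota_1 \colon \pi_1 \Rightarrow \oplus$ and $\iota_2 \colon \pi_2 \Rightarrow \oplus$ of functors $\bI\times\bI \to \bI$, given by the canonical inclusions $[a] \to [a+b]$ and $[b] \to [a+b]$. Whiskering with $X$ yields natural transformations $X\circ\pi_1 \to X\circ\oplus$ and $X\circ\pi_2 \to X\circ\oplus$ of functors $\bI_{>n}\times\bI_{>n} \to \Top$. By hypothesis, every component of these transformations is a weak equivalence, since all objects involved lie in $\bI_{>n}$. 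Therefore \cref{natural weak equiv gives weak equiv} gives weak equivalences
\[
  \hocolim_{\bI_{>n}^2} X\circ\pi_1 \xrightarrow{\simeq} \hocolim_{\bI_{>n}^2} X\circ\oplus \xleftarrow{\simeq} \hocolim_{\bI_{>n}^2} X\circ\pi_2 .
\]
The two outer terms are identified by \cref{behavior of bar const with projections} with $(\hocolim_{\bI_{>n}} X)\times \cB\bI_{>n}$. Moreover, $\cB\bI_{>n}$ is contractible: $\bI_{>n}$ has the ``shift by $[n+1]$'' functor, and the natural transformations $\mathrm{id} \Rightarrow [n+1]\oplus - \Leftarrow \text{const}_{[n+1]}$ give homotopies in the classifying space. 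Hence each outer term is weakly equivalent to $\hocolim_{\bI_{>n}} X$.

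To bring in $X(d)$, we restrict along the inclusion $\{d\}\times\bI_{>n} \hookrightarrow \bI_{>n}^2$, or more conveniently, compare directly. Consider the constant functor at $X(d)$ on $\bI_{>n}$ together with the natural transformation $c_{X(d)} \to X(d\oplus -)$ induced by $d \to d\oplus c$. This is objectwise a weak equivalence. Next, the transformation $X \to X(d\oplus -)$ induced by $c \to d\oplus c$ is also objectwise a weak equivalence. Using \cref{bar construction const functor} together with contractibility of $\cB\bI_{>n}$, we obtain a zig-zag
\[
  X(d) \simeq \hocolim_{\bI_{>n}} c_{X(d)} \xrightarrow{\simeq} \hocolim_{\bI_{>n}} X(d\oplus -) \xleftarrow{\simeq} \hocolim_{\bI_{>n}} X .
\]
This route in fact makes the $\bI\times\bI$ detour unnecessary, although the detour is what one needs if one insists on using the projection lemma.

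The main obstacle is upgrading this zig-zag to the specific map $X(d)\to\hocolim_{\bI}X$ (inclusion of the $0$-simplices at $d$), rather than just an abstract weak equivalence. To handle this, we would check that the composite $X(d) \to \hocolim X \to \hocolim X(d\oplus -)$ is homotopic to the map through $\hocolim c_{X(d)}$. This follows because the $1$-simplex given by $d \to d\oplus d$ connects the two inclusions. Then two-out-of-three applies. If this proves delicate, we will settle for the statement as written, which only asserts the existence of a weak equivalence, and that existence is already delivered by the zig-zag.
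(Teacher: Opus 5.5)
\textbf{There is a genuine gap: your argument does not produce the map $X(d)\to\hocolim_{\bI}X$, and your proposed upgrade step is false.}

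\emph{The fallback is not enough.} Your direct zig-zag $X(d)\simeq\hocolim_{\bI_{>n}}c_{X(d)}\to\hocolim_{\bI_{>n}}X(d\oplus-)\leftarrow\hocolim_{\bI_{>n}}X$ is correct as far as it goes. Both transformations are natural and objectwise weak equivalences, and your contractibility argument for $\cB\bI_{>n}$ via $[n+1]\oplus-$ works. But because of the backwards arrow, it only shows that $X(d)$ and $\hocolim_{\bI}X$ are isomorphic in the homotopy category. It does not give a map $X(d)\to\hocolim_{\bI}X$, which is what the statement asserts. It is also what is used later: the canonical map $Y([1])\to hY$ in the proof of \cref{thm:ssa_weak_htpy_type}. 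A weak equivalence pointing the wrong way cannot be inverted without CW hypotheses that are not available for spaces like $\cB^2\cG_A$.

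\emph{The upgrade step fails.} Write $\iota_1,\iota_2\colon d\to d\oplus d$ for the first and second block inclusions. In $\hocolim_{c}X(d\oplus c)$, the canonical route sends $x\in X(d)$ to $[d;X(\iota_2)x]$, and the route through $c_{X(d)}$ sends it to $[d;X(\iota_1)x]$. The simplices of this homotopy colimit only move points by the maps $X(1_d\oplus g)$, and $1_d\oplus g$ is the identity on the first block. So no 1-simplex joins these two points. A morphism $d\to d\oplus d$ in particular gives a 1-simplex from object $d$ to object $d\oplus d$, not between the two points. For general $X$ the simplicial structure provides no path between them at all. They differ by $X(\tau)$ for the block swap $\tau$ of $d\oplus d$, which is not of the form $1_d\oplus g$. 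Handling $\tau$ needs an extra argument that your proposal does not contain.

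\emph{How the paper avoids this.} The missing device is exactly the $\bI\times\bI$ detour you called unnecessary. In the paper, your zig-zag is the left column
\[
\hocolim_{\{d\}\times\bI_{>n}}X\pi_1\to\hocolim_{\{d\}\times\bI_{>n}}X\mu\leftarrow\hocolim_{\{d\}\times\bI_{>n}}X\pi_2\cong\hocolim_{\bI_{>n}}X .
\]
Restriction maps this into the same zig-zag over $\bI_{>n}\times\bI_{>n}$. There, $(\pi_1)_*$ and $(\pi_2)_*$ are \emph{separately} homotopy equivalences onto $\hocolim_{\bI_{>n}}X$, by \cref{behavior of bar const with projections} and contractibility of $\cB\bI_{>n}$. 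The ladder commutes strictly in $\Mod$, and $(\pi_2)_*$ is a homeomorphism on $\{d\}\times\bI_{>n}$. So two-out-of-three propagates from the bottom row to the top row. It shows that $(\pi_1)_*\circ\iota_*\colon X(d)\times\cB\bI_{>n}\to\hocolim_{\bI_{>n}}X$ is a weak equivalence. Since $\pi_1$ is constant at $d$ on $\{d\}\times\bI_{>n}$, this composite is literally the canonical map precomposed with the projection. The two block inclusions never have to be compared.

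\emph{Repairing your own route.} You can keep your one-variable approach by pushing forward along the functor $d\oplus-\colon\bI_{>n}\to\bI_{>n}$. This uses the standard fact that natural transformations between indexing functors induce homotopies of homotopy colimits.
\begin{enumerate}[(i)]
\item The transformation $\mathrm{id}\Rightarrow d\oplus-$ shows that $(d\oplus-)_*$ is a weak equivalence.
\item The transformation $\mathrm{const}_d\Rightarrow d\oplus-$, with components $\iota_1$, shows that $(d\oplus-)_*$ composed with the map induced by $c_{X(d)}\to X(d\oplus-)$ is homotopic to the canonical map composed with the projection $X(d)\times\cB\bI_{>n}\to X(d)$.
\end{enumerate}
This works because in $\hocolim_{\bI_{>n}}X$ the inclusions $\iota_1$ are available as structure maps, unlike in $\hocolim_{c}X(d\oplus c)$.
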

    \begin{proof}
    By \cref{hocolim inclusion equiv}, it suffices to exhibit a weak equivalence 
    \[
        X(d) \xrightarrow{\simeq } \hocolim_{\bI_{>n}} X\ .
    \]
    Let us write $\mu \colon \bI \times \bI \to \bI$ for the monoidal structure in $\bI$ and $\pi_i \colon \bI \times \bI \to \bI$ $(i = 1,2)$ for the two projections. As $[0]$ is initial in $\bI$ and a unit for $\mu$, there are natural transformations $\alpha \colon \pi_1 \to \mu$ and $\beta \colon \pi_2 \to \mu$. For instance, $\pi_1([\lambda], [k]) \to \mu([\lambda], [k])$ is obtained by the composition $\pi_1([\lambda], [k]) = [\lambda] \xrightarrow{\cong}\mu([\lambda], [0]) \to \mu([\lambda], [k])$ where we have used the identity on $[\lambda]$ and the unique morphism $[0] \to [k]$ to get the rightmost arrow.
    
    Let us use the following notation for objects in $\Mod$ (see \cref{double bar construction section}) to save space: $(\ast_\cC, \cC, F)$ is denoted by $(\cC, F)$. Let us also denote by $\{d\} \times \bI_{>n}$ the subcategory of $\bI_{>n} \times \bI_{>n}$ which contains morphisms only of the form $1_d \times g$ for $g \in \mor{\bI_{>n}}$. 

Using the maps $\alpha, \beta$ and the subcategory inclusion $\{d\} \times \bI_{>n} \to \bI_{>n} \times \bI_{>n}$, we get a commutative diagram in $\Mod$:
\[
\begin{tikzcd}[cramped]
	& {(\bI_{>n}, X)} \\
	{(\{d\} \times \bI_{>n}, X\pi_1)} & {(\bI_{>n} \times \bI_{>n}, X\pi_1)} \\
	{(\{d\} \times \bI_{>n}, X\mu)} & {(\bI_{>n} \times \bI_{>n}, X\mu)} \\
	{(\{d\} \times \bI_{>n}, X\pi_2)} & {(\bI_{>n} \times \bI_{>n}, X\pi_2)} \\
	& {(\bI_{>n}, X)}
	\arrow["{\iota_\ast}", from=2-1, to=2-2]
	\arrow["{(X\alpha)_\ast}"', from=2-1, to=3-1]
	\arrow["{(\pi_1)_\ast}"', from=2-2, to=1-2]
	\arrow["{(X\alpha)_\ast}", from=2-2, to=3-2]
	\arrow["{\iota_\ast}", from=3-1, to=3-2]
	\arrow["{(X\beta)_\ast}", from=4-1, to=3-1]
	\arrow["{\iota_\ast}", from=4-1, to=4-2]
	\arrow["{{(\pi_2)}_\ast}"{description}, from=4-1, to=5-2]
	\arrow["\cong"', shift right=2, draw=none, from=4-1, to=5-2]
	\arrow["{(X\beta)_\ast}"', from=4-2, to=3-2]
	\arrow["{{(\pi_2)}_\ast}", from=4-2, to=5-2]
\end{tikzcd}
\] 
where by $x_\ast$ we mean the arrow in $\Mod$ induced by $x$. Notice that the $(\pi_2)_\ast$ arrow decorated by $\cong$ is an isomorphism in $\Mod$.

Taking homotopy colimits, we get a commutative diagram in $\Top$:
\[
\begin{tikzcd}[column sep=1.5cm,row sep=.5cm]
	& \displaystyle{\hocolim_{\bI_{>n}} X} \\
	\displaystyle{\hocolim_{\{d\} \times \bI_{>n}} X\pi_1} & 
    \displaystyle{\hocolim_{\bI_{>n} \times \bI_{>n}} X\pi_1} \\
	\displaystyle{\hocolim_{{\{d\} \times\bI_{>n}}} X\mu} & 
    \displaystyle{\hocolim_{\bI_{>n} \times \bI_{>n}} X\mu} \\
	\displaystyle{\hocolim_{\{d\} \times\bI_{>n}} X\pi_2} & 
    \displaystyle{\hocolim_{\bI_{>n} \times \bI_{>n}} X\pi_2} \\
	& \displaystyle{\hocolim_{\bI_{>n}} X}
	\arrow[from=2-1, to=2-2]
	\arrow["\simeq"', from=2-1, to=3-1]
	\arrow[from=2-2, to=1-2]
	\arrow["\simeq", from=2-2, to=3-2]
	\arrow[from=3-1, to=3-2]
	\arrow["\simeq", from=4-1, to=3-1]
	\arrow[from=4-1, to=4-2]
	\arrow["\cong"', from=4-1, to=5-2]
	\arrow["\simeq"', from=4-2, to=3-2]
	\arrow[from=4-2, to=5-2]
\end{tikzcd}
\]

The arrow labeled by $\cong$ is a homeomorphism by the functoriality of the construction of the homotopy colimit.
The vertical maps labeled by $\simeq$ are induced by the natural transformations $X\alpha, X\beta$ which are point-wise weak equivalences by our assumptions on $X$. Thus, their induced maps on homotopy colimits are weak equivalences by \cref{natural weak equiv gives weak equiv}. 

Taking the homotopy colimit of the constant functor $\bI \to \Top$ at the one-point space, it follows from \cref{hocolim inclusion equiv} and \cref{bar construction const functor} that there is a homotopy equivalence $$\cB\bI_{>n} \xrightarrow{\simeq} \cB\bI$$ Moreover, as $\bI$ has an initial element, it follows that $\cB\bI$ is contractible. Using the above and \cref{behavior of bar const with projections}, it follows that the arrows $$\hocolim_{\bI_{>n} \times \bI_{>n}}X\pi_1 \to \hocolim_{\bI_{>n}}X$$ and $$\hocolim_{\bI_{>n} \times \bI_{>n}}X\pi_2 \to \hocolim_{\bI_{>n}}X$$ are homotopy equivalences.

From the 2-out-of-3 rule for weak equivalences and the commutativity of the above diagram, it follows that the arrow $$\hocolim_{\{d\} \times \bI_{>n}} X\pi_1 \to \hocolim_{\bI_{>n} \times \bI_{>n}}X\pi_1$$ is a weak equivalence. By composition, we thus have a weak equivalence $$\hocolim_{\{d\} \times \bI_{>n}} X\pi_1 \to \hocolim_{\bI_{>n}}X$$ From \cref{bar construction const functor}, we have a homeomorphism $$\hocolim_{\{d\} \times \bI_{>n}} X\pi_1 \cong X(d) \times \cB\bI_{>n}$$ As $\cB\bI_{>n}$ is contractible, we have a homotopy equivalence 
$$X(d) \to \hocolim_{\{d\} \times \bI_{>n}} X\pi_1$$ These give a weak equivalence $X(d) \to \hocolim_{\bI_{>n}}X$.
\end{proof}


\section{Infinite loop space structures on \texorpdfstring{$\cB^2\cG_A, \cB^2P\cG_A$}{B2GA,B2PGA} and \texorpdfstring{$\cB^2\wcG_A$}{B2tildeGA}}\label{section 4}
In this section we will prove our main result by constructing $\bI$-FCPs associated to the monoidal categories $\cG_A$, $P\cG_A$ and $\wcG_A$. We will use the notation that we introduced in \cref{strict monoidal cats from crossed modules}.

Let $A$ be a separable SSA $C^*$-algebra with $U(A)$ connected. As $A$ is SSA, it follows that $U(A^{\otimes m})$ is isomorphic to $U(A)$. As $U(A)$ is connected, $U(A^{\otimes m})$ has a universal cover for all $m \geq 1$. Our preferred model for this was described in detail in \cref{universal-cover of group}. Let $p_m \colon \wcU(A^{\otimes m}) \to U(A^{\otimes m})$ be the covering map.

Since $A$ is simple, we have $ZU(A) \cong U(1)$. Let $q_m \colon U(A^{\otimes m}) \to PU(A^{\otimes m})$ be the quotient map which identifies $PU(A^{\otimes m})$ with $U(A^{\otimes m})/U(\C^{\otimes m})$. We denote $q_m(u)$ for $u \in U(A^{\otimes m})$, by $[u]$.

Depending on the context we will use the notation $1_A$ for the multiplicative unit as well as the identity morphism $A \to A$, which we might also denote by $\id_A$. We begin by constructing functors $\bI \to \TCat$. But first, we need a preliminary remark.

\begin{rem}\label{I-morphism factorization}
    Let $f \colon [n] \to [m]$ be an injection in $\mor{\bI}$. In particular, we have $n \leq m$. Let $\iota_{n,m} \colon [n] \to [m]$ be the inclusion map. There exists a bijection $\sigma_f \colon [m]\to [m]$ in $\mor{\bI}$ such that ${f = \sigma_f \circ \iota_{n,m}}$.
\end{rem}

The idea now is to produce functors out of $\sigma_f$ and $\iota_{n, m}$.

\begin{const}\label{helpers for F_A}
Any bijection $\sigma \colon [m] \to [m]$ in $\bI$ induces an isomorphism $\overline{\sigma} \colon A^{\otimes m} \to A^{\otimes m}$ via ${a_1 \otimes a_2 \otimes ... \otimes a_m \mapsto a_{\sigma(1)} \otimes a_{\sigma(2)} \otimes ... \otimes a_{\sigma(m)}}$ and then using the density of the linear span of the simple tensors.
        
This isomorphism in turn induces a functor $\Sigma_{\sigma} \colon \cG_{A^{\otimes m}} \to \cG_{A^{\otimes m}}$ given by 
\begin{equation} \label{eqn:GA_Sigma}
    \begin{tikzcd}[cramped]
	\alpha & {\overline{\sigma} \circ \alpha \circ \overline{\sigma}^{-1}} \\
	{\Ad_u \alpha} & {\Ad_{\overline{\sigma}(u)} \overline{\sigma} \circ \alpha \circ \overline{\sigma}^{-1}} 
	\arrow[maps to, from=1-1, to=1-2]
	\arrow["{(u, \alpha)}"', from=1-1, to=2-1]
	\arrow["{(\overline{\sigma}(u), \overline{\sigma} \circ \alpha \circ \overline{\sigma}^{-1})}", from=1-2, to=2-2]
	\arrow[maps to, from=2-1, to=2-2]
    \end{tikzcd}
\end{equation}
for $\alpha \in \Aut(A) = \ob{\cG_A}$ and $(u, \alpha) \in \mor{\cG_A}$. 

If $n \leq m$, we have a functor $\mathcal{I}_{n, m} \colon \cG_{A^{\otimes n}} \to \cG_{A^{\otimes m}}$ given by 
\begin{equation} \label{eqn:GA_Inm}
    \begin{tikzcd}[cramped]
	\alpha & {\alpha \otimes \id_{A^{ \otimes (m - n)}}} \\
	{\Ad_{u}\alpha} & {\Ad_{u \otimes 1_{A^{ \otimes (m - n)}}}\alpha \otimes \id_{A^{ \otimes (m - n)}}}
	\arrow[maps to, from=1-1, to=1-2]
	\arrow["{(u, \alpha)}"', from=1-1, to=2-1]
	\arrow["{(u \otimes 1_{A^{ \otimes (m - n)}}, \alpha \otimes id_{A^{ \otimes (m - n)}}) }", from=1-2, to=2-2]
	\arrow[maps to, from=2-1, to=2-2]
    \end{tikzcd}
\end{equation}
for $\alpha \in \Aut(A) = \ob{\cG_A}$ and $(u, \alpha) \in \mor{\cG_A}$. In both cases it is straightforward to check that the constructions are compatible with composition.

There are analogous constructions of $\mathcal{I}_{n, m}$ and $\Sigma_{\sigma}$ for $P\cG_A$ and $\widetilde{G}_{A}$ as well. In fact, since $A$ is SSA we have $PU(A) = U(A)/U(\C)$. Hence, to get the diagram for $P\cG_A$ one just needs to replace the unitaries in both diagram \eqref{eqn:GA_Inm} and \eqref{eqn:GA_Sigma} by their equivalence classes in $PU(A)$. Note that $\Ad_{\overline{\sigma}(u)} = \Ad_{\overline{\sigma}([u])}$, $\Ad_{u \otimes 1} = \Ad_{[u \otimes 1]}$, $[\sigma(u)] = \sigma([u])$ and $[u \otimes 1]$ only depends on $[u]$. We will denote the resulting functors by $P\Sigma_{\sigma}$ and $P\mathcal{I}_{n,m}$.

To obtain the analogous construction for $\wcG_A$ we lift the functors for $\cG_A$. A bijection $\sigma \colon [m] \to [m]$ defines a $*$-isomorphism $\overline{\sigma} \colon A^{\otimes m } \to A^{\otimes m}$. Restricting to $U(A^{\otimes m})$ it induces a topological group isomorphism $\overline{\sigma} \colon U(A^{\otimes m}) \to U(A^{\otimes m})$. It is easy to see (using \cref{functorial construction of universal cover}) that $\overline{\sigma} \colon U(A^{\otimes m}) \to U(A^{\otimes m})$ then defines a topological group isomorphism $\widetilde{\sigma} \colon \wcU(A^{\otimes m}) \to \wcU(A^{\otimes m})$ with $p_m \circ \widetilde{\sigma} = \overline{\sigma} \circ p_m$.
    
Therefore define a functor $\widetilde{\Sigma}_{\sigma} \colon \wcG_{A^{\otimes m}} \to\wcG_{A^{\otimes m}}$ via:
\[
\begin{tikzcd}[cramped]
	\alpha & {\overline{\sigma} \circ \alpha \circ \overline{\sigma}^{-1}} \\
	{\Ad_{p_m(\widetilde{u})}\alpha} & {\Ad_{p_m(\widetilde{\sigma}(u))}\overline{\sigma} \circ \alpha \circ \overline{\sigma}^{-1}}
	\arrow[maps to, from=1-1, to=1-2]
	\arrow["{{(\widetilde{u}, \alpha)}}"', from=1-1, to=2-1]
	\arrow["{{(\widetilde{\sigma}(u), \overline{\sigma} \circ \alpha \circ \overline{\sigma}^{-1})}}", from=1-2, to=2-2]
	\arrow[maps to, from=2-1, to=2-2]
\end{tikzcd}
\] 
The equation $p_m \circ \widetilde{\sigma} = \overline{\sigma} \circ p_m$ implies that this is indeed well-defined and a functor.

Similarly, if $n \leq m$, one has the $*$-homomorphism $A^{\otimes n} \to A^{\otimes m}$ which is defined on the simple tensors via $a_1 \otimes ... \otimes a_n \mapsto a_1 \otimes ... \otimes a_n \otimes 1^{\otimes (m-n)}$ and extended using density. It induces a $*$-homomorphism $i_{n, m} \colon U(A^{\otimes n}) \to U(A^{\otimes m})$ which, by \cref{functorial construction of universal cover}, induces a group homomorphism $\widetilde{i}_{n, m}\colon \wcU(A^{\otimes n}) \to \wcU(A^{\otimes m})$ with $p_m \circ \widetilde{i}_{n, m} = i_{n, m} \circ p_n$.

Hence, we can define a functor $\widetilde{\mathcal{I}}_{n, m} \colon \wcG_{A^{\otimes n}} \to  \wcG_{A^{\otimes m}}$ given by:
\[\begin{tikzcd}[cramped]
	\alpha & {\alpha \otimes \id_{A^{ \otimes (m - n)}}} \\
	{\Ad_{p_n(\widetilde{u})}\alpha} & {\Ad_{p_m(\widetilde{i}_{n,m}(\widetilde{u}))}\alpha \otimes \id_{A^{ \otimes (m - n)}}}
	\arrow[maps to, from=1-1, to=1-2]
	\arrow["{{(\widetilde{u}\, \alpha)}}"', from=1-1, to=2-1]
	\arrow["{{(\widetilde{i}_{n, m}(\widetilde{u}), \alpha \otimes id_{A^{ \otimes (m - n)}}) }}", from=1-2, to=2-2]
	\arrow[maps to, from=2-1, to=2-2]
\end{tikzcd}\]
\end{const}

We are now in the position to define our three $\bI$-FCPs.

\begin{defn}
    We define functors $F_A, PF_A, \wcF_{A} \colon \bI \to \TCatmon$ from $\bI$ to the category of topological strict monoidal categories as follows:
    \begin{itemize}
        \item For $[n] \in \ob{\bI}$ with $n > 0$, we set 
        \begin{align*}
            F_A([n]) &:= \cG_{A^{\otimes n}}\ ,\\
            PF_A([n]) &:= P\cG_{A^{\otimes n}}\ ,\\
            \wcF_{A}([n]) &:= \wcG_{A^{\otimes n}}
        \end{align*} and $[0]$ always maps to the category with one object and one morphism.
        \item For $f \in \bI([n], [m])$, we write $f = \sigma_f \circ \iota_{n, m}$ as in \cref{I-morphism factorization} and set:
                \begin{align*}
            F_A(f) &:= \Sigma_{\sigma_f} \circ \mathcal{I}_{n, m}\ ,\\
            PF_A(f) &:= P\Sigma_{\sigma_f} \circ P\mathcal{I}_{n, m}\ ,\\
            \wcF_{A}(f) &:= \widetilde{\Sigma}_{\sigma_f} \circ \widetilde{\mathcal{I}}_{n, m}\ .
        \end{align*}
    \end{itemize}
\end{defn}

Since the decomposition $f = \sigma_f \circ \iota_{n,m}$ is not unique, we need to check that these definitions make sense.  

\begin{prop}
    $F_A, PF_A, \wcF_A$ are well-defined and indeed functors.
\end{prop}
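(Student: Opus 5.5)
The strategy is to remove the dependence on the choice of $\sigma_f$ by giving a closed formula for $F_A(f)$ that involves only $f$. An injection $f \colon [n] \to [m]$ determines a unital $*$-homomorphism $\overline{f} \colon A^{\otimes n} \to A^{\otimes m}$. On simple tensors it places $a_i$ in tensor slot $f(i)$ and puts $1_A$ in every slot outside the image of $f$; it extends to all of $A^{\otimes n}$ by density, exactly as in \cref{helpers for F_A}. The maps $\overline{\sigma}$ and $i_{n,m}$ of \cref{helpers for F_A} are special cases of this construction (up to the convention of $\sigma$ versus $\sigma^{-1}$ in the definition of $\overline{\sigma}$, which I would fix consistently so that $\overline{g\circ f}=\overline{g}\circ\overline{f}$). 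With that convention, functoriality $\overline{g \circ f} = \overline{g}\circ\overline{f}$ is immediate on simple tensors, and $\overline{1_{[n]}} = \id$.

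Next I would write $\Sigma_{\sigma_f}\circ\mathcal{I}_{n,m}$ explicitly and observe that it equals the functor $\Phi_f \colon \cG_{A^{\otimes n}} \to \cG_{A^{\otimes m}}$ given as follows.
\begin{itemize}
\item On objects, $\Phi_f(\alpha) = \overline{\sigma_f}\circ(\alpha\otimes\id)\circ\overline{\sigma_f}^{-1}$, which is the unique automorphism acting as $\alpha$ on the tensor factors in the image of $f$ (arranged via $\overline{f}$) and as the identity on the remaining factors.
\item On morphisms, $\Phi_f(u,\alpha) = (\overline{f}(u), \Phi_f(\alpha))$.
\end{itemize}
The key point is that $\Phi_f(\alpha)$ is determined by $\Phi_f(\alpha)\circ\overline{f} = \overline{f}\circ\alpha$ together with $\Phi_f(\alpha)$ fixing $1^{\otimes}$ on the complementary factors. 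Both conditions can be checked on simple tensors, and they do not mention $\sigma_f$. Hence two factorisations $f=\sigma\circ\iota_{n,m}=\sigma'\circ\iota_{n,m}$ give the same functor: $\sigma$ and $\sigma'$ agree on $[n]$, and the two automorphisms $\overline{\sigma}(\alpha\otimes\id)\overline{\sigma}^{-1}$ and $\overline{\sigma'}(\alpha\otimes\id)\overline{\sigma'}^{-1}$ differ only by conjugation by a permutation of the last $m-n$ factors, on which $\alpha\otimes\id$ acts trivially. Functoriality then follows from the closed formula: $\Phi_g\circ\Phi_f$ and $\Phi_{g\circ f}$ both satisfy the characterising property, and on unitaries we have $\overline{g}\,\overline{f} = \overline{gf}$. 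The case $n=0$ is handled separately: the unique functor out of the trivial category sends its object to $\id$ and its morphism to $(1,\id)$, which is again the formula for $\Phi_f$. Each $\Phi_f$ is continuous, because conjugation by a fixed automorphism and $\alpha\mapsto\alpha\otimes\id$ are continuous in the point-norm topology (the latter by checking on simple tensors and using density with equicontinuity), and it is strict monoidal, because $\alpha\mapsto\Phi_f(\alpha)$ and $u\mapsto\overline{f}(u)$ are group homomorphisms compatible with the actions, i.e.\ $\overline{f}(\alpha(u)) = \Phi_f(\alpha)(\overline{f}(u))$. So these are morphisms in $\TCatmon$.

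For $PF_A$ the statement passes to the quotient, since $\overline{f}$ maps $U(1)$ into $U(1)$. For $\wcF_A$ I would use \cref{functorial construction of universal cover}: the homomorphism $\overline{f}|_{U}$ lifts uniquely to $\widetilde{f}$ on universal covers, and uniqueness of lifts of continuous homomorphisms between connected covers gives $\widetilde{g}\,\widetilde{f}=\widetilde{gf}$ and independence of the factorisation. I expect the main obstacle to be the independence check for objects: showing cleanly that permuting the complementary factors fixes $\alpha\otimes\id_{A^{\otimes(m-n)}}$. I would do this by evaluating both sides on simple tensors and using density.
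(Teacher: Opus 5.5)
Your proposal is correct, but it is organised differently from the paper's proof.

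\textbf{The paper's route.} The paper works with the factorisation $f=\sigma_f\circ\iota_{n,m}$ throughout:
\begin{itemize}
\item For well-definedness it notes that two choices $\sigma_f,\sigma_f'$ agree on $[n]$ and only sort the appended identities into the gaps in different orders.
\item For strict monoidality it checks $\Sigma_\sigma$ and $\mathcal{I}_{n,m}$ separately by direct computation.
\item For functoriality it extends $\sigma_f$ by the identity to a permutation $\sigma$ of $[k]$. It then uses $\iota_{m,k}\circ\sigma_f=\sigma\circ\iota_{m,k}$ to get $\mathcal{I}_{m,k}\circ\Sigma_{\sigma_f}=\Sigma_\sigma\circ\mathcal{I}_{m,k}$, and composes.
\end{itemize}

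\textbf{Your route.} You remove $\sigma_f$ from the picture by identifying $F_A(f)$ with the crossed-module morphism $(\overline{f},\Phi_f)$. Here $\Phi_f(\alpha)$ is characterised by two conditions: $\Phi_f(\alpha)\circ\overline{f}=\overline{f}\circ\alpha$, and $\Phi_f(\alpha)$ acts as the identity on the complementary tensor factors. It is this second condition, not merely fixing the unit, that pins $\Phi_f(\alpha)$ down, because the two commuting subalgebras generate $A^{\otimes m}$. Your phrase ``fixing $1^{\otimes}$'' should be read that way.

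With this characterisation, independence of the factorisation and $F_A(g\circ f)=F_A(g)\circ F_A(f)$ follow formally from $\overline{g\circ f}=\overline{g}\circ\overline{f}$. Monoidality reduces to the compatibility $\overline{f}(\alpha(u))=\Phi_f(\alpha)(\overline{f}(u))$, and this transports cleanly to $PU$ and to universal covers.

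\textbf{Comparison.} The paper's route is more hands-on and reuses the already-defined functors $\Sigma_\sigma$ and $\mathcal{I}_{n,m}$. Yours is more intrinsic, treats $n=0$ and continuity explicitly (both left implicit in the paper), and exposes the convention issue you flagged.

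\textbf{The convention issue is real.} With $\overline{\sigma}(a_1\otimes\dots\otimes a_m)=a_{\sigma(1)}\otimes\dots\otimes a_{\sigma(m)}$ as literally written in the paper, $\sigma\mapsto\overline{\sigma}$ is contravariant. As a result, $\Sigma_{\sigma_f}\circ\mathcal{I}_{n,m}$ genuinely depends on the choice of $\sigma_f$. For example, take $f\colon[1]\to[3]$ with $f(1)=2$, and the two choices $\sigma_f=(1\mapsto 2,\,2\mapsto 1,\,3\mapsto 3)$ and $\sigma_f'=(1\mapsto 2,\,2\mapsto 3,\,3\mapsto 1)$. The literal formula sends $a\otimes 1\otimes 1$ to $1\otimes a\otimes 1$ and to $1\otimes 1\otimes a$ respectively. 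So the covariant convention you fix, placing $a_i$ in slot $\sigma(i)$, is the one the paper's argument (``permute these identities into the gaps'', $\Sigma_{\sigma_g}\circ\Sigma_\sigma=\Sigma_{\sigma_g\circ\sigma}$) tacitly needs.
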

\begin{proof}
    We will provide the full proof for $F_A$. The other cases will follow using minor modifications of this argument by either projecting down to $PU(A^{\otimes m})$ or lifting to $\wcU(A^{\otimes m})$ using the functor $G\mapsto \widetilde{G}$ (see \cref{universal-cover of group}).
    
    \begin{claim*}
    $F_A$ is well-defined and takes values in $\TCatmon$.
    \end{claim*}
    Let $f = \sigma_f' \circ \iota_{n, m} = \sigma_f \circ \iota_{n,m}$ with $\sigma_f$ and $\sigma_f'$ bijections $[m] \to [m]$. Notice that this implies that $\sigma_f$ and $\sigma_f'$ have to agree on the first $n$ elements of $[m]$, while the elements $n+1, \dots, m$ are mapped bijectively to the gaps between $\{\sigma_f(1), \dots, \sigma_f(n)\} \subset [m]$ in an order that might differ for $\sigma_f$ and $\sigma_f'$. 
        
    We need to show the equality of functors $\Sigma_{\sigma_f} \circ \mathcal{I}_{n,m} = \Sigma_{\sigma'_f} \circ \mathcal{I}_{n,m}$. Both, on objects and on morphisms, the functor $\mathcal{I}_{n,m}$ appends identities (either tensor copies of $1_A$ or of $\id_A$) at the end, i.e.\ in the positions $n+1, \dots, m$. By our considerations above, $\Sigma_{\sigma_f}$ and $\Sigma_{\sigma_f'}$ will permute these identities into the gaps between $\sigma_f(1), \dots, \sigma_f(n)$. But since the elements that are permuted are all the same, the order in which they get sorted into the gaps does not matter. Hence, we have $\Sigma_{\sigma_f} \circ \mathcal{I}_{n,m} = \Sigma_{\sigma'_f} \circ \mathcal{I}_{n,m}$.

    It remains to be seen that $\Sigma_{\sigma}$ and $\mathcal{I}_{n,m}$ are compatible with the monoidal structures $\ostar$ on $\cG_{A^{\otimes n}}$ (see \cref{def: topological strict monoidal cat associated to topological crossed module}). Since conjugation by $\overline{\sigma}$ and tensoring with $\id$ are group homomorphisms, this is clear on objects. On morphisms we have
    \begin{align*}
        \Sigma_{\sigma}(u,\alpha) \ostar \Sigma_{\sigma}(v,\beta) & = (\overline{\sigma}(u), \overline{\sigma}\alpha\overline{\sigma}^{-1}) \otimes (\overline{\sigma}(v), \overline{\sigma}\beta\overline{\sigma}^{-1}) = (\overline{\sigma}(u\alpha(v)), \overline{\sigma}\alpha\beta\overline{\sigma}^{-1}) \\
        & = \Sigma_{\sigma}(u\alpha(v), \alpha\beta) = \Sigma_{\sigma}( (u, \alpha) \ostar (v,\beta))\ ,\\
        \mathcal{I}_{n,m}(u,\alpha) \ostar \mathcal{I}_{n,m}(v,\beta)  & =  (u \otimes 1, \alpha \otimes \id) \otimes (v \otimes 1, \beta \otimes \id) = (u\alpha(v) \otimes 1, \alpha\beta \otimes \id) \\
        & = \mathcal{I}_{n,m}(u\alpha(v),\alpha\beta) = \mathcal{I}_{n,m}((u, \alpha) \ostar§ (v,\beta))\ .
    \end{align*}

    \begin{claim*}
        $F_A$ is a functor.
    \end{claim*}
        It is clear that $F(\id_{[m]}) = \id_{\cG_{A^{\otimes m}}}$.
    Let $[n] \xrightarrow{f} [m] \xrightarrow{g} [k]$ be morphisms in $\bI$, in particular $n \leq m \leq k$. We have decompositions $f = \sigma_f \circ \iota_{n,m}$ and $g = \sigma_g \circ \iota_{m, k}$ where $\sigma_{f} \colon [m] \to [m]$ and $\sigma_{g} \colon [k] \to [k]$ are bijections. Then, $g \circ f = \sigma_g \circ \iota_{m,k} \circ \sigma_{f} \circ \iota_{n, m}$.

    Let $\sigma \colon [k] \to [k]$ be the permutation that acts like $\sigma_f$ on the first $m$ elements and is the identity on $m+1, \dots, k$. Then we have 
    \[
        \iota_{m, k} \circ \sigma_f = \sigma \circ \iota_{m, k} \qquad \text{and} \qquad \overline{\sigma_f} \otimes \id_{A^{\otimes (k-m)}} = \overline{\sigma}\ ,
    \]
    which gives us $g \circ f = \sigma_g \circ \sigma \circ \iota_{n, k}$ and $\mathcal{I}_{m,k} \circ \Sigma_{\sigma_f} = \Sigma_{\sigma} \circ \mathcal{I}_{m,k}$. Therefore
    \[
        F_A(g) \circ F_A(f) = \Sigma_{\sigma_g} \circ \mathcal{I}_{m,k} \circ \Sigma_{\sigma_f} \circ \mathcal{I}_{n,m} = \Sigma_{\sigma_{g} \circ \sigma} \circ \mathcal{I}_{n, k} = F_A(g \circ f)\ ,
    \]
    which finishes the proof.
\end{proof}

Our three functors take values in monoidal categories. Therefore we can apply $\cB^2$ pointwise to get three functors $\cB^2F_A, \cB^2PF_A$ and $\cB^2\wcF_A$ from $\bI$ to $\Top$. We now check that these satify the conditions in \cref{equiv characterization of commutative monoid}.

\begin{prop}\label{monoids under day conv for our functors}
    $\cB^2F_A, \cB^2PF_A, \cB^2\wcF_A$ are $\bI$-FCPs.
\end{prop}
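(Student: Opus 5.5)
The plan is to build the structure one level down, on the functors $F_A, PF_A, \wcF_A \colon \bI \to \TCatmon$ themselves, and then push it through $\cB^2$. I treat $F_A$ in detail. The other two cases follow by passing to equivalence classes in $PU$, or by lifting along the universal cover functor of \cref{functorial construction of universal cover}, exactly as in the proof of the previous proposition.

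\textbf{Step 1: the product on categories.} For $m,n>0$, define a continuous functor
\[
\mu_{[m],[n]} \colon \cG_{A^{\otimes m}} \times \cG_{A^{\otimes n}} \to \cG_{A^{\otimes (m+n)}} .
\]
On objects it sends $(\alpha,\beta) \mapsto \alpha\otimes\beta$, and on morphisms it sends $((u,\alpha),(v,\beta)) \mapsto (u\otimes v,\alpha\otimes\beta)$. When one argument lives over $[0]$, the functor is the obvious unit isomorphism, and $\eta$ picks out the unique object of the trivial category. Several things must be checked:
\begin{itemize}
\item[(a)] Continuity. The maps $(u,v)\mapsto u\otimes v$ and $(\alpha,\beta)\mapsto\alpha\otimes\beta$ are continuous in norm and point-norm respectively; the latter uses an $\varepsilon/3$ argument on finite sums of elementary tensors.
\item[(b)] Compatibility with the target. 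We have $\Ad_{u\otimes v}(\alpha\otimes\beta)=\Ad_u\alpha\otimes\Ad_v\beta$.
\item[(c)] Compatibility with composition. This is immediate.
\item[(d)] Strict monoidality. We need $\mu$ to be a map in $\TCatmon$, where the product of monoidal categories carries the componentwise structure. On morphisms this amounts to $(u\otimes v)(\alpha\otimes\beta)(u'\otimes v') = u\alpha(u')\otimes v\beta(v')$, which holds.
\end{itemize}

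\textbf{Step 2: naturality and the coherence diagrams.} Naturality in $\bI\times\bI$ means that for $f=\sigma_f\iota$ and $g=\sigma_g\iota$, the composite $\mu\circ(F_A(f)\times F_A(g))$ equals $F_A(f\oplus g)\circ\mu$. I would write $f\oplus g$ as a permutation composed with an inclusion, namely $(\sigma_f\oplus\sigma_g')\circ\iota$ after shuffling the appended identity factors, where $\sigma_g'$ is a shifted copy of $\sigma_g$. Then I would use the same observation as in the well-definedness proof: padding tensor factors are identities, so their positions among the gaps are irrelevant.

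The remaining diagrams follow from canonical identifications of tensor products. Associativity holds on the nose, since $(A^{\otimes m}\otimes A^{\otimes n})\otimes A^{\otimes k}=A^{\otimes(m+n+k)}$. The unit diagrams are trivial. The symmetry diagram requires
\[
\Sigma_{\chi_{m,n}}(\alpha\otimes\beta)=\beta\otimes\alpha \quad\text{and}\quad \overline{\chi_{m,n}}(u\otimes v)=v\otimes u,
\]
where $\chi_{m,n}$ is the block transposition $[m]\oplus[n]\to[n]\oplus[m]$. This holds by the definition of $\overline{\sigma}$.

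I expect the main obstacle to be bookkeeping: conventions for $\oplus$ and for $\overline\sigma$ must match so that the block swap is $\overline{\chi}$ and not its inverse. This bookkeeping matters especially for $\wcF_A$, where one must check that $\widetilde{u}\otimes\widetilde{v}$ is well defined. I would define it as the pointwise tensor of paths modulo $\Omega_0$, and check that it is compatible with $\widetilde{\sigma}$ and $\widetilde{i}$ via functoriality of $G\mapsto\widetilde G$ applied to the homomorphism $U\times U\to U$.

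\textbf{Step 3: transport through $\cB^2$.} The functor $N$ preserves products strictly. By \cref{B commutes with products}, thin realisation preserves products up to natural homeomorphism. The realisation $|N\cG|$ is a topological group by \cref{crossed module cat gives top group after realization}, and the strict monoidal functor $\mu$ becomes a continuous homomorphism $|N\mu|$. Applying $|N(-)|$ again and using \cref{B commutes with products} once more, we obtain
\[
\cB^2\mu \colon \cB^2F_A([m])\times\cB^2F_A([n]) \to \cB^2F_A([m+n]) .
\]
This uses that the spaces involved are Hausdorff $k$-spaces, which holds by \cref{weak equiv between pennig constr and B^2} and \cref{main groups are all hausdorff k-groups}. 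Because the product-comparison homeomorphisms are natural and coherent (they are induced by projections), every diagram in \cref{equiv characterization of commutative monoid} that commutes at the level of $\TCatmon$ still commutes after applying $\cB^2$. This shows that $\cB^2F_A$ is an $\bI$-FCP, and the same argument applies to $\cB^2PF_A$ and $\cB^2\wcF_A$.
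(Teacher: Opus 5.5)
Your proposal is correct and follows the paper's proof. You define $\mu$ on the crossed-module categories by $(\alpha,\beta)\mapsto\alpha\otimes\beta$ and $((u,\alpha),(v,\beta))\mapsto(u\otimes v,\alpha\otimes\beta)$, check strict monoidality via $u\alpha(u')\otimes v\beta(v')$, and transport through $\cB^2$ using \cref{crossed module cat gives top group after realization} and \cref{B commutes with products}, with the product on $\wcU$ obtained by functoriality of $G\mapsto\widetilde G$. You are more explicit than the paper, which leaves continuity, naturality in $\bI\times\bI$ and the symmetry square implicit. This includes the convention for $\overline{\sigma}$, which must be chosen so that the block swap acts as $u\otimes v\mapsto v\otimes u$.
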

\begin{proof}
    For $m, n > 0$, one has the functors $\mu_{n,m}:\cG_{A^{\otimes m}} \times \cG_{A^{\otimes n}} \to \cG_{A^{\otimes (m+n)}}$ given by $(\alpha, \beta) \mapsto \alpha \otimes \beta$ on objects and $((u,\alpha), (v, \beta)) \mapsto (u \otimes v, \alpha \otimes \beta)$ on morphisms. For any $k \geq 0$, we define $\mu_{k, 0} = \pi_{1}$ and $\mu_{0, k} = \pi_2$ ($\pi_i$ is the projection to the $i$th factor). These make diagrams similar to the ones in \cref{equiv characterization of commutative monoid} commute and turn $F_A$ into an FCP valued in $\TCat$. 
    
We verify that each $\mu_{m, n}$ ($m, n \geq 0$) is a morphism of monoidal categories. Let $\ostar_n \colon \cG_{A^{\otimes n}} \times \cG_{A^{\otimes n}} \to \cG_{A^{\otimes n}}$ be the monoidal structure on $\cG_{A^{\otimes n}}$ as described in \cref{def: topological strict monoidal cat associated to topological crossed module}. We need to check the commutativity of the following diagram
\[
\begin{tikzcd}[cramped]
	{(\cG_{A^{\otimes n}} \times \cG_{A^{\otimes m}}) \times (\cG_{A^{\otimes n}} \times \cG_{A^{\otimes m}})} && {\cG_{A^{\otimes (n+m)}} \times \cG_{A^{\otimes (n+m)}}} \\
	{\cG_{A^{\otimes n}} \times \cG_{A^{\otimes m}}} && {\cG_{A^{\otimes (n+m)}}}
	\arrow["{(\mu_{n,m}, \mu_{n,m})}", from=1-1, to=1-3]
	\arrow["{(\ostar_n, \ostar_m) \circ \tau}"', from=1-1, to=2-1]
	\arrow["{\ostar_{n+m}}", from=1-3, to=2-3]
	\arrow["{\mu_{n,m}}"', from=2-1, to=2-3]
\end{tikzcd}
\] 
where $\tau$ is the functor permuting the two middle factors.

As $\cG_{A^{\otimes 0}}$ is, by definition, the trivial category, the commutativity of the above diagram is apparent for either $m = 0$ or $n = 0$. For $m, m > 0$, let $(u, \alpha), (w, \gamma)$ be arrows in $\cG_{A^{\otimes n}}$ and let $(v, \beta)$ and $(x, \delta)$ be arrows in $\cG_{A^{\otimes m}}$. This follows from the Eckmann-Hilton-like equation
\begin{align*}
   & ((u,\alpha) \otimes (v,\beta)) \ostar ((w,\gamma) \otimes (x,\delta)) = (u \otimes v, \alpha \otimes \beta) \ostar (w \otimes x, \gamma \otimes \delta) \\
   =& (u\alpha(w) \otimes v\beta(x), \alpha\gamma \otimes \beta\delta) = ((u,\alpha) \ostar (w,\gamma)) \otimes ((v, \beta) \ostar (x,\delta))\ .
\end{align*}

Note that $U(A^{\otimes k}), \Aut(A^{\otimes k})$ are Hausdorff $k$-groups by \cref{main groups are all hausdorff k-groups} and hence, so is $\cB\cG_{A^{\otimes k}}$ by \cref{weak equiv between pennig constr and B^2} and \cref{crossed module cat gives top group after realization}. Since $\mu_{n,m}$ are monoidal functors for all $m, n \geq 0$, they give group homomorphisms $\cB\cG_{A^{\otimes m}} \times \cB\cG_{A^{\otimes n}} \to \cB\cG_{A^{\otimes (m+n)}}$. Applying $\cB$ again and using \cref{B commutes with products} we get maps 
\[
    \cB^2F_A([m]) \times \cB^2F_A([n]) \to \cB^2F_A([m + n]) = \cB^2F_A([m] \oplus [n])
\] 
which constitute the natural transformations required in \cref{equiv characterization of commutative monoid}.

Similarly, one applies the same idea (noting that $PU(A^{\otimes k})$ and $\wcU(A^{\otimes k})$ are also Hausdorff $k$-groups by \cref{main groups are all hausdorff k-groups}) to the functors $P\cG_{A^{\otimes m}} \times P\cG_{A^{\otimes n}} \to P\cG_{A^{\otimes (m+n)}}$ given by $\alpha, \beta \mapsto \alpha \otimes \beta$ on objects and $(([u], \alpha), ([v], \beta)) \mapsto ([u \otimes v], \alpha \otimes \beta)$ on morphisms (for $PF_A$) and to the functors $\widetilde{G}_{A^{\otimes m}} \times \widetilde{G}_{A^{\otimes n}} \to \widetilde{G}_{A^{\otimes (m+n)}}$ given by $\alpha, \beta \mapsto \alpha \otimes \beta$ on objects and $((\widetilde{u}, \alpha), (\widetilde{v}, \beta)) \mapsto (\widetilde{u} \hat{\otimes} \widetilde{v}, \alpha \otimes \beta)$ on morphisms (for $\wcF_A$) where $\hat{\otimes} \colon \wcU(A^{\otimes m}) \times \wcU(A^{\otimes n}) \to \wcU(A^{\otimes (m + n)})$ is the group homomorphism induced by the group homomorphism $U(A^{\otimes m}) \times U(A^{\otimes n}) \to U(A^{\otimes (m + n)}) : x, y \mapsto x \otimes y$ after application of the functor $G \mapsto \widetilde{G}$ (see \cref{universal-cover of group}).
\end{proof}

We show that $\cB^2F_A, \cB^2P F_A, \cB^2\wcF_A$ satisfy the hypotheses of B\"okstedt's lemma.

\begin{prop}\label{weak equiv for our functors}
    $\cB^2F_A(f), \cB^2PF_A(f)$ and $\cB^2\wcF_A(f)$ are weak equivalences for all $f \in \bI([m], [n])$ with $m, n > 0$.
\end{prop}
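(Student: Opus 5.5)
We prove that for every morphism $f=\sigma_f\circ\iota_{n,m}$ with $n>0$ the map $\cB^2F_A(f)$ is a weak equivalence; the same argument applies to $PF_A$ and $\wcF_A$. Since $\cB^2$ is a functor on $\TCatmon$, we have $\cB^2F_A(f)=\cB^2\Sigma_{\sigma_f}\circ\cB^2\mathcal{I}_{n,m}$. The functor $\Sigma_{\sigma_f}$ is an isomorphism of topological strict monoidal categories, with inverse $\Sigma_{\sigma_f^{-1}}$, so $\cB^2\Sigma_{\sigma_f}$ is a homeomorphism. It therefore suffices to show that $\cB^2\mathcal{I}_{n,m}$ is a weak equivalence. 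Since $\mathcal{I}_{n,m}=\mathcal{I}_{m-1,m}\circ\dots\circ\mathcal{I}_{n,n+1}$, we may replace $A^{\otimes n}$ by the SSA algebra $B=A^{\otimes n}$ and treat only the map $\cB^2\mathcal{G}_B\to\cB^2\mathcal{G}_{B\otimes A}$ induced by $\alpha\mapsto\alpha\otimes\id_A$ and $u\mapsto u\otimes 1_A$.

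We reduce this to levelwise statements. By \cref{lem:nerve_of_crossed_mod_and_bar_construction}, $N_k\cG_B\cong \Aut(B)\times U(B)^k$, and $\mathcal{I}$ acts factorwise by $\alpha\mapsto \alpha\otimes\id$ and $u\mapsto u\otimes 1$ (this is compatible with $\mu_k$ because $\partial$ commutes with $\mathcal{I}$). The two spaces $\cB^2\cG = |N|N\cG||$ come from the bisimplicial space $(k,l)\mapsto N_l(N_k\cG)$, which at level $(k,l)$ is a product $(\Aut(B)\times U(B)^k)^l$. By \cref{big topological lemma for ua and pua}(4), $U(B)\to U(B\otimes A)$ is a weak equivalence; for $PU$ we use part (5). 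For $\Aut$ we need that $\Aut(B)\to\Aut(B\otimes A)$, $\alpha\mapsto\alpha\otimes\id_A$, is a weak equivalence, which is a known fact for SSA algebras. If this is not available in the paper, we instead show that $\Aut(B)\to \Aut(B\otimes A)$ is a homotopy equivalence via the standard argument: $\id_B\otimes 1_A$ is approximately unitarily equivalent to an isomorphism $B\cong B\otimes A$ through a continuous path of unitaries (\cref{defn of ssa algebra}), and this gives the required homotopy. This step is the main obstacle, and it is also needed for the $\wcU$ case through the covering: the map $\wcU(B)\to\wcU(B\otimes A)$ lifts a weak equivalence between connected groups and is therefore a weak equivalence on all $\pi_k$ with $k\ge 1$, while both spaces are simply connected.

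Finally, we pass to realisations in two stages. In the first stage, $\mathcal{I}$ induces a levelwise weak equivalence $N\cG_B\to N\cG_{B\otimes A}$ of simplicial spaces, because finite products of weak equivalences are weak equivalences. These simplicial spaces are Reedy cofibrant because $U$, $PU$, $\wcU$ and $\Aut$ are well-pointed (\cref{big topological lemma for ua and pua} and the appendix arguments behind \cref{weak equiv between pennig constr and B^2}). \cref{pointwise weak equiv realize to weak equiv} then shows that $\cB\cG_B\to\cB\cG_{B\otimes A}$ is a weak equivalence of well-pointed Hausdorff $k$-groups. In the second stage, the map $N(\cB\cG_B)\to N(\cB\cG_{B\otimes A})$ is levelwise a product of copies of this map, so it is again a levelwise weak equivalence between Reedy cofibrant simplicial spaces (by well-pointedness from \cref{weak equiv between pennig constr and B^2}(1)). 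A second application of \cref{pointwise weak equiv realize to weak equiv} shows that $\cB^2\mathcal{I}_{n,m}$ is a weak equivalence, which proves \cref{weak equiv for our functors}.
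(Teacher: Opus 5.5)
Your proposal is correct and follows essentially the same route as the paper. You split off the permutation (an isomorphism) and reduce to $\iota_{n,n+1}$. You then apply \cref{pointwise weak equiv realize to weak equiv} twice: once levelwise to $N\cG_{A^{\otimes n}}\cong \Aut(A^{\otimes n})\times U(A^{\otimes n})^k$, and once to the nerve of the group $\cB\cG_{A^{\otimes n}}$. The levelwise input comes from \cref{big topological lemma for ua and pua}(4),(5) and the covering argument for $\wcU$, as in the paper.

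The only difference is the $\Aut$-factor. The paper quotes contractibility of $\Aut(A^{\otimes n})$, which also gives the well-pointedness of $\Aut$ that both you and the paper tacitly use in the second stage. Your fallback works as well: the path $u_t$ yields a homotopy from $\Ad_{u_0}\circ(\alpha\otimes\id_A)\circ\Ad_{u_0}^{-1}$ to $\psi\circ\alpha\circ\psi^{-1}$, so $\alpha\mapsto\alpha\otimes\id_A$ is homotopic to a homeomorphism.
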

\begin{proof}
    This is where the SSA hypothesis on $A$ becomes relevant. For brevity, let $X = F_A$, $PF_A$ or $\wcF_A$. Writing $f = \sigma_{f} \circ \iota_{n, m}$ as in \cref{I-morphism factorization}, it is enough to check that $\cB^2X(\iota_{n, m})$ is a weak equivalence for all $m \geq n > 0$. Because $\iota_{n,m}$ factors into $\iota_{k, k+1}$ for $k = n,\dots, m-1$, this reduces further to showing that $\iota_{n, n+1}$ induces weak equivalences $\cB^2X(\iota_{n, n+1})$ for all $n > 0$. By \cref{pointwise weak equiv realize to weak equiv}, it suffices to see that $\iota_{n, n+1}$ induces a weak equivalence $\cB X(\iota_{n, n+1}) \colon \cB X([n]) \to \cB X([n+1])$. Using \cref{lem:nerve_of_crossed_mod_and_bar_construction} and \cref{big topological lemma for ua and pua}, it is easy to see that $NX([n])$ is a Reedy cofibrant simplicial space for all $n \geq 0$. Since $\cB X(\iota_{n, n+1})$ is induced by a simplicial map $NX([n]) \to NX([n+1])$, it suffices by \cref{pointwise weak equiv realize to weak equiv} to check that this simplicial map is a point-wise weak equivalence. For $m > 0$ and using \cref{lem:nerve_of_crossed_mod_and_bar_construction}, we identify $N_mX([n]) \to N_mX[n+1]$ with:
    \begin{align*}
        \Aut(A^{\otimes n}) \times U(A^{\otimes n})^{m} \times \ast &\to \Aut(A^{\otimes (n+1)}) \times U(A^{\otimes (n+1)})^{m} \times \ast\\
        (x, (h_1, h_2, ..., h_m), \ast) &\mapsto (x \otimes \id_A, (h_1 \otimes 1_A, h_2 \otimes 1_A, ..., h_m \otimes 1_A), \ast)
    \end{align*} if $X = F_A$. For $X = PF_A$ we replace $h_j$ by $[h_j]$ and $h_j \otimes 1$ by $[h_j \otimes 1]$. In case of $X = \wcF_A$ we use $\widetilde{i}_{n,n+1}(h_j)$ (see \cref{helpers for F_A} for the definition of $\widetilde{i}_{n,n+1}$) in place of $h_j \otimes 1$.

    For $m = 0$, the contractibility of $\Aut(A^{\otimes n})$ for an SSA $C^*$-algebra $A$ proven in \cite[Theorem~2.3]{Dadarlat_2015} implies that $N_0X[n] \to N_0X[n+1]$ is a weak equivalence. For $m > 0$ and $X = F_A$ or $PF_A$, it follows from \cref{big topological lemma for ua and pua} (along with the *-isomorphism $A^{\otimes n} \cong A$) that $N_mX[n] \to N_mX[n+1]$ is a weak equivalence. For $m > 0$ and $X = \wcF_A$, it follows by the long exact sequence of homotopy groups for the universal coverings $\wcU(A^{\otimes n}) \to U(A^{\otimes n})$ and the five-lemma that $N_mX[n] \to N_mX[n+1]$ is a weak equivalence.
\end{proof}

Thus, we have the following

\begin{thm} \label{thm:ssa_weak_htpy_type}
    Let $A$ be an SSA $C^*$-algebra with $U(A)$ connected (note that this condition is automatic if $A$ satisfies the UCT). Then
    \begin{itemize}
        \item there are weak equivalences \begin{align*}
    \cB^2\cG_A &= \cB^2F_A([1])\xrightarrow{\simeq} \hocolim_{\bI}\cB^2F_A\ ,\\
        \cB^2P\cG_A &= \cB^2PF_A([1])\xrightarrow{\simeq}\hocolim_{\bI}\cB^2PF_A\ ,\\
        \cB^2\wcG_A &= \cB^2\wcF_A([1])\xrightarrow{\simeq} \hocolim_{\bI}\cB^2\wcF_A\ .
    \end{align*}
    \item $\hocolim_{\bI}\cB^2F_A, \hocolim_{\bI}\cB^2PF_A$ and $\hocolim_{\bI}\cB^2\wcF_A$ are infinite loop spaces.
    \end{itemize}
        
           In particular, $\cB^2\cG_A, \cB^2P\cG_A$ and $\cB^2\wcG_A$ are infinite loop spaces.
\end{thm}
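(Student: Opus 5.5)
The plan is to combine the three ingredients assembled above: B\"okstedt's lemma (\cref{boeksted lemma ref}), the FCP structure (\cref{monoids under day conv for our functors}) and the infinite loop space machine (\cref{inf loop space machine used}).

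\textbf{Step 1: the weak equivalences.} Let $X$ be one of $\cB^2F_A$, $\cB^2PF_A$, $\cB^2\wcF_A$. By \cref{weak equiv for our functors}, $X(f)$ is a weak equivalence for every morphism $f$ between objects $[m],[n]$ with $m,n>0$, that is, between objects of $\bI_{>0}$. So \cref{boeksted lemma ref} applies with $n=0$ and $d=[1]$. It gives a weak equivalence $X([1])\to\hocolim_\bI X$. By definition $X([1])$ is $\cB^2\cG_A$, $\cB^2P\cG_A$ or $\cB^2\wcG_A$ respectively.

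\textbf{Step 2: infinite loop space structure.} Each $X$ is an $\bI$-FCP by \cref{monoids under day conv for our functors}. \cref{Linds gamma space machine} then yields a $\Gamma$-space $X_F$ with $X_F(1^+)=\hocolim_\bI X$. By \cref{inf loop space machine used}, it suffices to show that the abelian monoid $\pi_0(\hocolim_\bI X)$ is a group.

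\textbf{Step 3: $\pi_0$ is trivial (main obstacle).} Step 1 gives $\pi_0(\hocolim_\bI X)\cong\pi_0(\cB^2\cG)$ for the relevant crossed module $\cG$. I will show $\cB^2\cG=|N|N\cG||$ is path-connected.
\begin{itemize}
\item The nerve $N|N\cG|$ of the topological group $|N\cG|$ (\cref{crossed module cat gives top group after realization}) has a single point in simplicial degree $0$.
\item Every point of the thin realization lies in the image of some $S_k\times\Delta^k$. The simplex $\Delta^k$ is connected and all its vertices map to that single basepoint.
\end{itemize}
Hence $\cB^2\cG$ is connected, so $\pi_0$ is the trivial group. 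In particular it is a group, and the hypothesis of \cref{inf loop space machine used} holds automatically. This gives a connective weak $\Omega$-spectrum $E$ with $\hocolim_\bI X\simeq E_0$.

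\textbf{Step 4: conclusion.} Composing with Step 1, each of $\cB^2\cG_A$, $\cB^2P\cG_A$, $\cB^2\wcG_A$ is weakly equivalent to $E_0$, hence is an infinite loop space. The same argument works for $\cG_A$ and $P\cG_A$ without assuming $U(A)$ connected; that hypothesis is only needed for $\wcU(A)$ to exist.

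Two points need care.
\begin{itemize}
\item One must verify that \cref{weak equiv for our functors} covers all morphisms in $\bI_{>0}$, including the permutations $\sigma_f$. These act by homeomorphisms, so $X(\sigma_f)$ is a weak equivalence.
\item The weak equivalence $\hocolim_\bI X\simeq E_0$ should be based. I would take basepoints induced from the unit $\eta$.
\end{itemize}
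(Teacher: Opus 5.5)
Your proposal is correct and follows the paper's proof essentially step for step. You apply Bökstedt's lemma with $n=0$ and $d=[1]$, then combine the $\bI$-FCP structure with Lind's $\Gamma$-space construction and the May--Thomason machine, and the $\pi_0$ hypothesis holds because $\pi_0$ is trivial. Your one addition is the explicit argument that $\cB^2\cG=|N|N\cG||$ is path-connected, since its simplicial space has a single $0$-simplex; the paper simply asserts this.
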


\begin{proof}
    For brevity, let $Y := \cB^2F_A$ or $\cB^2PF_A$ or $\cB^2\wcF_A$ and let $X := \cB^2\cG_A$ or $\cB^2P\cG_A$ or $\cB^2\wcG_A$ accordingly. Let us also write $hY := \hocolim_{\bI}Y$.

    From \cref{weak equiv for our functors} and \cref{boeksted lemma ref}, it follows that the canonical map $X = Y([1]) \to hY$ is a weak equivalence. As $X$ is path-connected, the weak equivalence implies $\pi_0(hY) = \ast$, the trivial group.
    We also showed that $Y$ is a $\bI$-FCP. By \cref{Linds gamma space machine} we get a $\Gamma$-space $R_Y \colon \Gamma^{\op} \to \Top$ such that $R_Y(1^+) = hY$ and $R_Y(0^+) = \ast$. By \cref{inf loop space machine used}, $hY$ is weakly equivalent to an infinite loop space. Hence, $X$ is an infinite loop space.
\end{proof}

We briefly recall one of the main consequences of the above result. Let $\Gamma$ be a discrete group. It was shown in \cite{pacheco2025gkernelscrossedmodules} that $\Gamma$-kernels and cocycle actions up to their respective notions of conjugacy are in bijection with cohomology sets $H^1(\Gamma,P\cG_A)$ and $H^1(\Gamma,\cG_A)$, respectively. Moreover, there is a natural tranformation 
\[
    H^1(\Gamma, \cG) \to [\cB\Gamma, \cB^D\cG]
\]
from the cohomology set to a homotopy set. By \cref{thm:ssa_weak_htpy_type} the right hand side of this transformation actually has much more structure in the case of SSA algebras.
\begin{cor}
    Let $A$ be an SSA $C^*$-algebra. Let $\cG$ be one of the associated topological crossed modules $\cG_A$, $P\cG_A$ or $\wcG_A$ (where $U(A)$ is assumed to be connected in the final case). Let $\Gamma$ be a discrete group. Then
    \[
        H^1(\Gamma, \cG) \to [\cB\Gamma, \cB^D\cG] \cong [\cB\Gamma,\cB^2\cG]
    \]
    takes values in an abelian group that is part of a cohomology theory.
\end{cor}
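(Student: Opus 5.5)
The plan is to combine \cref{thm:ssa_weak_htpy_type} with the comparison results for the various models of the classifying space, and then read off the group structure from the $\Omega$-spectrum. First I would identify $[\cB\Gamma,\cB^D\cG]$ with $[\cB\Gamma,\cB^2\cG]$. By \cite[Theorem~4.9]{pacheco2025gkernelscrossedmodules}, $\cB^D\cG\simeq\cB^\otimes\cG$ as soon as $H$ is well-pointed. By \cref{weak equiv between pennig constr and B^2}, the canonical map $\cB^\otimes\cG\to\cB^2\cG$ is a weak equivalence once $G,H\in\ob{\Top}$ and both are well-pointed. These hypotheses come from \cref{main groups are all hausdorff k-groups} and \cref{big topological lemma for ua and pua}, together with contractibility of $\Aut(A)$ \cite[Theorem~2.3]{Dadarlat_2015}, which gives well-pointedness of $\Aut(A)$ after a suitable check. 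For $\wcU(A)$, well-pointedness follows because a covering map is a local homeomorphism onto the well-pointed group $U(A)$. Since $\cB\Gamma$ has the homotopy type of a CW-complex, weak equivalences induce bijections on $[\cB\Gamma,-]$. These bijections should be taken compatibly with the zig-zag, so they are natural in $\Gamma$.

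Next I would invoke \cref{thm:ssa_weak_htpy_type}, which gives weak equivalences $\cB^2\cG\simeq\hocolim_\bI Y\simeq E_0$, where $E=E(WX_Y)$ is the connective weak $\Omega$-spectrum of \cref{inf loop space machine used}. Hence $[\cB\Gamma,\cB^2\cG]\cong[\cB\Gamma,E_0]\cong[\cB\Gamma,\Omega E_1]$. The right-hand side is an abelian group, since $\Omega^2E_2$ is a homotopy commutative $H$-group. It is the degree-zero group $E^0(\cB\Gamma)$ of the cohomology theory represented by $E$; after shifting, it is the degree-one group of the delooped spectrum. The identification $X\mapsto[X,E_n]$ as a cohomology theory is the classical result cited in the introduction \cite[Theorem~4.58]{hatcher2001algebraic}. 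For non-finite $\cB\Gamma$ one uses the represented theory on CW-complexes, with Milnor's additivity coming for free from representability.

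The remaining issue is the hypothesis that $U(A)$ is connected. The corollary only assumes it for $\wcG_A$, whereas \cref{thm:ssa_weak_htpy_type} is stated under that assumption throughout. Looking at its proof, connectivity of $U(A)$ is used only to form $\wcU(A^{\otimes m})$. For $\cG_A$ and $P\cG_A$, the inputs are \cref{weak equiv for our functors}, which relies on \cref{big topological lemma for ua and pua} and Dadarlat--Pennig, together with the fact that $\cB^2\cG$ is path-connected: $N_0$ of the outer nerve is a point because $|N\cG|$ is a group. So the argument applies verbatim to these two cases without assuming connectivity, and I would state this explicitly.

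The main obstacle I expect is the first step: verifying the well-pointedness hypotheses of \cref{weak equiv between pennig constr and B^2} for $G=\Aut(A)$, and making the chain of weak equivalences natural. If the well-pointedness of $\Aut(A)$ in the point-norm topology cannot be checked directly, I would instead rely on the fact that \cite[Theorem~4.15]{pacheco2025gkernelscrossedmodules} already compares these models in the SSA case.
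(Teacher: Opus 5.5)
Your proposal is correct and follows the paper's route: the paper likewise combines \cite[Theorem~4.9]{pacheco2025gkernelscrossedmodules} and \cref{weak equiv between pennig constr and B^2} with \cref{thm:ssa_weak_htpy_type}, using connectedness of $U(A)$ only to form $\wcU(A^{\otimes m})$, as you observe, and it leaves implicit the well-pointedness and CW-source bookkeeping that you spell out. Your \emph{suitable check} for $\Aut(A)$ does go through: if $h_t$ contracts $\Aut(A)$, then $g \mapsto h_t(\id_A)^{-1}h_t(g)$ is a contraction fixing $\id_A$, and taking this contraction together with $g \mapsto \min\{d(g,\id_A),1\}$ for a compatible metric $d$, \cite[Theorem~2]{stromcofib1} gives well-pointedness.
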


\begin{rem}
    In follow-up work we will study natural transformations between the resulting cohomology theories, in particular the ones induced by the infinite loop space maps 
    \[
        \cB^D\cG_A \to \cB^D P\cG_A \qquad , \qquad \cB^D\wcG_A \to \cB^D\cG_A \qquad \text{and} \qquad \cB^D\wcG_A \to \cB^D P\cG_A
    \]
    and how they relate to lifting obstructions.
\end{rem}


\section{Appendix (Proofs of lemmas)}\label{section 5 - appendix}

\begin{proof}[\textbf{Proof of \cref{big topological lemma for ua and pua}}]

In preparation of the proof, let us fix the following conventions:
\begin{itemize}
    \item Write $f \colon \mathbb{R} \to S^1$ for $x \mapsto \exp(ix)$.
    \item Fix $\pi > \epsilon > 0$ and let $\rho := \norm{e^{i(\pi-\epsilon)} - 1}$, $\delta := \norm{e^{i \frac{\pi - \epsilon}{2}}- 1}$.
    \item Let $\theta \colon f((-\pi+\epsilon, \pi-\epsilon)) \to \mathbb{R}$ be the unique function such that $\theta(e^{it}) = t$ for all $t \in (-\pi+\epsilon, \pi-\epsilon)$. 
    \item By functional calculus, $f$ and $\theta$ induce continuous maps from a subset of $A_{sa}$ to $A$ which are inverses.
    \item  Let $V := \{x \in U(A) : \norm{x - 1} < \rho\}$.
    \item  Let $R := \{h \in A_{sa} : \norm{h} < \pi-\epsilon\}$.
    \item Let $R' := \{h \in A_{sa} : \norm{h} < \frac{\pi - \epsilon}{2}\} \subset R$.
    \item Let $V' := \{x \in U(A) : \norm{x - 1} < \delta\} \subset V$
\end{itemize}

It is clear that $f$ maps $V$ to $R$ and, so, its inverse $\theta$ maps $R$ to $V$.

\begin{proof}[\textbf{Proof of \cref{big topological lemma for ua and pua}(1)}]

Note that $\theta(1) = 0$. It follows that $V$ strongly deformation retracts onto $1 \in U(A)$ via the homotopy 
\[
H \colon V \times I \to U(A) : (x, t) \mapsto e^{i(1-t)\theta(x)}\ .
\]
We have the continuous function $\psi \colon U(A) \to I : x \mapsto \min\{\frac{\norm{x-1}}{\rho}, 1\}$ with $\psi^{-1}(0) = 1$ and $\psi \equiv 1$ on $X - V$. By \cite[Theorem~2]{stromcofib1}, $(U(A), 1)$ is a cofibered pair.
\end{proof}

\begin{proof}[\textbf{Proof of \cref{big topological lemma for ua and pua}(2)}]
 Let $q \colon U(A) \to PU(A) = U(A)/U(1)$ be the quotient map.  We apply \cite[Theorem~2]{stromcofib1} to two maps $H' \colon q(V') \times I \to PU(A)$ and $\hat{\sigma} \colon PU(A) \to I$ to be defined.  Firstly, note that $q$ is an open map because $U(1)$ acts continuously on $U(A)$ by left multiplication. Thus, $q(V')$ is open in $PU(A)$.

The map $H'$ is obtained in the following way. For $([x], t) \in q(V') \times I$, let $x \in V'$ be a representative of $[x]$ in $V'$ (note that $q^{-1}(q(V')) = U(1)V'$, so we really are making a choice). We define ${H'([x], t) = [e^{i(1-t)\theta(x)}]}$.

 \begin{claim*}
     This is well-defined and continuous.
 \end{claim*}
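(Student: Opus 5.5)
The plan is to handle well-definedness by a spectral computation and continuity by a quotient-map argument. Both reduce to the fact that on $V'$ the logarithm $\theta$ behaves additively under multiplication by scalars of small argument.

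\emph{Well-definedness.} Suppose $x, y \in V'$ with $q(x) = q(y)$, so $y = \lambda x$ for some $\lambda \in U(1)$. Since $\norm{x-1} < \delta$ and $x$ is unitary (hence normal), the spectrum of $x$ lies in the arc $\{e^{ia} : \lvert a\rvert < \frac{\pi-\epsilon}{2}\}$, and likewise for the spectrum of $y = \lambda x$, which is $\lambda\cdot\spec(x)$.

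Next I pick a single $z \in \spec(x)$ and set $s := \arg(\lambda z) - \arg(z)$, with both arguments taken in $(-\frac{\pi-\epsilon}{2}, \frac{\pi-\epsilon}{2})$. Then $\lvert s\rvert < \pi-\epsilon$ and $e^{is} = \lambda$. For every $w \in \spec(x)$ the number $\arg(\lambda w) - \arg(w)$ is congruent to $s$ mod $2\pi$ and also lies in $(-(\pi-\epsilon), \pi-\epsilon)$. It therefore equals $s$, and so $\theta(\lambda w) = \theta(w) + s$ on $\spec(x)$, with no wrap-around.

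By functional calculus (the function $w \mapsto \theta(\lambda w)$ is continuous on $\spec(x)$), this gives $\theta(\lambda x) = \theta(x) + s\cdot 1$. Since $s\cdot 1$ is central, it follows that
\[
e^{i(1-t)\theta(\lambda x)} = e^{i(1-t)s}\, e^{i(1-t)\theta(x)} ,
\]
and the scalar $e^{i(1-t)s}$ lies in $U(1)$. Hence both representatives give the same class in $PU(A)$, and $H'$ does not depend on the choice of representative in $V'$.

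\emph{Continuity.} The map $\tilde H \colon V' \times I \to U(A)$, $(x,t) \mapsto e^{i(1-t)\theta(x)}$, is continuous by continuity of the functional calculus for $\theta$ and $f$ on the relevant sets. By construction $H' \circ (q|_{V'} \times \id_I) = q \circ \tilde H$.

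The restriction $q|_{V'} \colon V' \to q(V')$ is a continuous open surjection, because $q$ is open and $V'$ is open. A product of an open surjection with $\id_I$ is again an open surjection, hence a quotient map. All spaces involved are metrizable or compact, so the $k$-product agrees with the ordinary product here. Therefore $H'$ is continuous.

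The main point requiring care is the uniform choice of $s$ across the whole spectrum. This is exactly why $V'$ is defined with the halved radius $\delta$ rather than $\rho$: it forces all argument differences into $(-(\pi-\epsilon), \pi-\epsilon)$, where $\theta$ is additive.
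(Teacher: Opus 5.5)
Your argument is correct and follows the same strategy as the paper. You show that two representatives $x,\lambda x\in V'$ have logarithms differing by a real scalar, so the homotopies agree in $PU(A)$, and you then descend a continuous map along an open quotient map. You streamline both steps: you get $\theta(\lambda x)=\theta(x)+s\cdot 1$ directly by functional calculus in $C^*(x)$, where the paper uses Stone--Weierstrass for commutativity and the spectral mapping theorem to show $\theta(x)-\theta(x')$ has one-point spectrum; and you descend in one step along the open surjection $q|_{V'}\times\id_I$, rather than first through $\mu\colon U(1)\times V'\to U(1)V'$ and then through $q$.
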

 \begin{proof}

     We have the homotopy $G \colon U(1) \times V' \times I \to PU(A) : (\lambda, x, t) \mapsto [e^{it\theta(x)}]$. The multiplication $\mu \colon U(1) \times U(A) \to U(A)$ is clearly an open map. Because $V'$ is open in $U(A)$, we have that the restriction $\mu \colon U(1) \times V' \to U(1)V'$ is a quotient map. Now, let $x, x' \in V'$ and $\lambda, \lambda' \in U(1)$ such that $\lambda x= \lambda'x'$. Then,  $x$ and $x'$ commute and $x(x')^\ast = z := \lambda^{\ast}\lambda' \in U(1)$. As $\theta(x), \theta(x') \in R'$, we have that $\norm{\theta(x) - \theta(x')} < \pi - \epsilon < \pi$. From the facts that $xx' = x'x$ and that $x, x' \in V'$ we get that $\theta(x)$ and $\theta(x')$ commute (this follows by a quick application of the Stone-Weierstrass theorem). By the commutativity of $\theta(x)$ and $\theta(x')$ we have that $e^{i(\theta(x)-\theta(x'))} = x(x')^{\ast} = z$. Let $\zeta \in [0, 2\pi)$ be such that $z = e^{i\zeta}$. It follows by the spectral mapping theorem that $\spec(\theta(x) - \theta(x')) \subseteq 2\pi\mathbb{Z} + \zeta$. As $\theta(x) - \theta(x')$ is self-adjoint and $\norm{\theta(x) - \theta(x')} < \pi$ and $\spec(\theta(x) - \theta(x')) \subseteq 2\pi \mathbb{Z} + \zeta$, it follows by the fact that $\zeta \in [0, 2\pi)$ that the spectrum of $\theta(x) - \theta(x')$ is just a point. Hence, $\theta(x) - \theta(x') = 2\pi k + \zeta \in (-\pi, \pi)$ for some $k \in \{-1, 0\}$. Hence, for all $t \in I$ we have that $e^{it\theta(x)} = e^{it(2\pi k + \zeta) + it\theta(x')} = z_te^{it\theta(x')}$ for some $z_t \in U(1)$ which implies that $[e^{it\theta(x)}] = [e^{it\theta(x')}]$ in $PU(A)$. Thus, $G(\lambda', x', t) = G(\lambda, x, t)$ for all $t \in I$ and all $\lambda, \lambda' \in U(1)$ and $x, x' \in V'$ such that $\lambda x = \lambda' x'$.  By properties of quotients, it follows that we have a continuous map $\hat{G} \colon U(1)V' \times I \to PU(A)$ that extends $G$ along $\mu$. Moreover, for any $x \in V'$ and any $\lambda, \lambda' \in U(1)$ we have that $\hat{G}(\lambda x, t) = \hat{G}(\lambda'x, t)$. It follows again by properties of quotients, that there is a (unique) continuous map $q(V') \times I \to PU(A)$ which extends $\hat{G}$ along $q$. This map coincides with $H'$.
 \end{proof} 
 The map $\hat{\sigma}$ is obtained in the following way. For $[x] \in PU(A)$, let $x \in U(A)$ be a representative of $[x]$. We define $\hat{\sigma}([x]) = \min\{\frac{\text{dist}(x, U(1))}{\delta}, 1\}$.

 \begin{claim*}
     This is well-defined and continuous.
 \end{claim*}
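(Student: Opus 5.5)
The claim to prove is that $\hat{\sigma}([x]) = \min\{\operatorname{dist}(x,U(1))/\delta, 1\}$ gives a well-defined continuous map $PU(A) \to I$.

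\emph{Well-definedness.} I will use that $\operatorname{dist}(x,U(1))$ is constant on $U(1)$-cosets. Take $\lambda \in U(1)$. Since $\lambda$ is a central scalar of modulus one, left multiplication by $\lambda$ is an isometry of $A$. It maps $U(1)$ onto itself. Hence
\[
\operatorname{dist}(\lambda x, U(1)) = \inf_{\mu \in U(1)} \norm{\lambda x - \mu} = \inf_{\mu \in U(1)} \norm{x - \lambda^{*}\mu} = \operatorname{dist}(x,U(1)).
\]
So $\hat{\sigma}([x])$ does not depend on the chosen representative.

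\emph{Continuity.} The function $x \mapsto \operatorname{dist}(x,U(1))$ is $1$-Lipschitz on $U(A)$, and so is its composite with $t \mapsto \min\{t/\delta,1\}$ up to the factor $1/\delta$. Thus $\sigma := \hat{\sigma} \circ q$ is continuous on $U(A)$ and constant on fibres of $q$. Since $q$ is a quotient map (it is even open), the universal property of quotients gives continuity of $\hat{\sigma}$.

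To finish the application of \cite[Theorem~2]{stromcofib1}, two further properties of $\hat{\sigma}$ are needed.
\begin{enumerate}
\item \emph{Zero set.} I will show $\hat{\sigma}^{-1}(0) = \{[1]\}$. Because $U(1)$ is compact, the infimum defining the distance is attained. So $\operatorname{dist}(x,U(1)) = 0$ forces $x \in U(1)$, that is $[x] = [1]$.
\item \emph{Value $1$ off $q(V')$.} If $\hat{\sigma}([x]) < 1$, then $\norm{x - \mu} < \delta$ for some $\mu \in U(1)$. Then $\mu^{*}x \in V'$, so $[x] = [\mu^{*}x] \in q(V')$. Contrapositively, $\hat{\sigma} \equiv 1$ on $PU(A) \setminus q(V')$.
\end{enumerate}

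Together with the homotopy $H'$, which satisfies $H'([x],0) = [x]$ and $H'([x],1) = [1]$ and fixes $[1]$ since $\theta(1)=0$, Str\o m's criterion applies. This yields that $(PU(A),[1])$ is a cofibered pair, i.e.\ that $PU(A)$ is well-pointed.

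The only mildly delicate point I expect is the attainment of the infimum, which I handle by the compactness of $U(1)$. The rest is routine.
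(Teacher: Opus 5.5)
Your proof is correct and follows the paper's argument. You show that $\sigma$ is invariant under multiplication by $U(1)$, supplying the isometry justification the paper only asserts, then descend it along the quotient map $q$, and verify $\hat{\sigma}^{-1}(0)=\{[1]\}$ and $\hat{\sigma}\equiv 1$ off $q(V')$. The one cosmetic difference is that compactness of $U(1)$ is not needed for the zero set; its closedness suffices, which is what the paper uses.
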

 \begin{proof}
     Let us consider the function $\sigma \colon U(A) \to [0, 1] : x \mapsto \min\{\frac{\text{dist}(x, U(1))}{\delta}, 1\}$. By the closedness of $U(1)$, we have $\sigma^{-1}(0) = U(1)$. Now, let $x \in U(A) - U(1)V'$.     It follows that for all $\lambda \in U(1)$, we have that $\lambda x \notin V'$. Hence, $\norm{\lambda x - 1} \geq \delta$ which implies that $\norm{x - \lambda^\ast} \geq \delta$ for all $\lambda \in U(1)$. This implies that $\text{dist}(x, U(1)) \geq \delta$ and hence $\sigma(x) = 1$. That is, $\sigma \equiv 1$ in $U(A) - U(1)V'$. Moreover, $\sigma(\lambda x) = \sigma(\lambda' x)$ for all $\lambda \in U(1)$ and all $x \in U(A)$. By properties of quotients, it follows that there is a (unique) continuous function $\hat{\sigma} \colon PU(A) \to [0, 1]$ which extends $\sigma$ along $q$. This function is precisely $\hat{\sigma}$.
 \end{proof}

It is evident that $\hat{\sigma}^{-1}(0) = \{[1]\} \in PU(A)$ and $\hat{\sigma} \equiv 1$ in $PU(A) - q(V')$, which concludes the proof of \cref{big topological lemma for ua and pua} (2).
\end{proof}

\begin{proof}[\textbf{Proof of \cref{big topological lemma for ua and pua}(3)}]
   Because $U(1)$ is normal in $U(A)$, we just need to show that $q \colon U(A) \to PU(A)$ has a section in a neighborhood of $[1_A]$. Consider the function $\tau \colon V' \to U(1) : x \mapsto e^{i \cdot \max(\spec(\theta(x)))}$.  We define the section $f \colon q(V') \to U(A)$ by $f([x]) = \tau(x)^{-1}x$ where $x \in V'$ is a representative of $[x]$.
    \begin{claim*}
        This is well-defined and continuous
    \end{claim*}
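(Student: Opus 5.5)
We need to show that the section $f([x]) = \tau(x)^{-1}x$ is well-defined on $q(V')$ and continuous, where $\tau(x) = e^{i\max(\spec(\theta(x)))}$. The plan mirrors the proof of the first claim in part (2): verify invariance on representatives, then pass continuity through quotient maps.

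\emph{Step 1: well-definedness.} Let $x, x' \in V'$ with $q(x) = q(x')$, so $x = z x'$ for some $z \in U(1)$. The argument in the proof of part (2) already gives the key facts. The elements $\theta(x)$ and $\theta(x')$ commute, and the spectrum of $\theta(x) - \theta(x')$ is a single point. Hence $\theta(x) = \theta(x') + c\cdot 1$ for a real scalar $c$ with $e^{ic} = z$. Since the spectrum is shifted by $c$, we get
\[
    \max\spec(\theta(x)) = \max\spec(\theta(x')) + c\ ,
\]
so $\tau(x) = e^{ic}\tau(x') = z\,\tau(x')$. Therefore
\[
    \tau(x)^{-1}x = \tau(x')^{-1}z^{-1}z\,x' = \tau(x')^{-1}x'\ ,
\]
and $f$ is well-defined. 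It is also a section: $q(f([x])) = [x]$, because $\tau(x)$ is a scalar.

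\emph{Step 2: continuity of $\tau$ on $V'$.} The map $x \mapsto \theta(x)$ is continuous into $A_{sa}$ by continuous functional calculus. For self-adjoint elements, $h \mapsto \max\spec(h)$ is $1$-Lipschitz in norm, since $\max\spec(h) = \sup_{\varphi}\varphi(h)$ over states $\varphi$. Hence $\tau$ is continuous, and so is $\tilde f \colon x \mapsto \tau(x)^{-1}x$ on $V'$.

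\emph{Step 3: descent to $q(V')$.} We extend $\tilde f$ to $U(1)V'$ by $\lambda x \mapsto \tilde f(x)$. This is well-defined by Step 1, which covers the case $\lambda x = \lambda' x'$ after rewriting it as $x = \lambda^{-1}\lambda' x'$. The extension is continuous because $\mu \colon U(1)\times V' \to U(1)V'$ is a quotient map, as shown in the proof of part (2). Since $q$ is open, $q|_{U(1)V'} \colon U(1)V' \to q(V')$ is a quotient map, and the extended map is constant on its fibres. Hence it descends to a continuous $f \colon q(V') \to U(A)$.

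\emph{Step 4: principal bundle structure.} Translating by group elements, $[g]\,q(V') \ni [y] \mapsto g\,f([g^{-1}y])$ gives local sections around every point. Together with the free, continuous $U(1)$-action on $U(A)$, these local sections give local trivialisations, which yields part (3).

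The main obstacle is Step 1: we must make sure that the scalar shift $c$ matches $z$ exactly, not just up to a multiple of $2\pi$. This is settled because $\tau$ only uses $e^{ic}$, so any $2\pi$ ambiguity in $c$ is harmless.
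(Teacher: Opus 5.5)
Your proposal is correct and follows essentially the same route as the paper. Continuity of $\tau$ comes from the $1$-Lipschitz property of $h \mapsto \max\spec(h)$, invariance comes from the scalar shift $\theta(x) = \theta(x') + c$ with $e^{ic} = z$ established in part (2), and the map then descends through the quotient maps $\mu$ and $q|_{U(1)V'}$. (A small simplification, available to both you and the paper: $q|_{V'} \colon V' \to q(V')$ is already an open continuous surjection, hence a quotient map, so the intermediate passage through $\mu$ could be skipped.)
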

    \begin{proof}
           The inequality $|\max(\spec(x)) - \max(\spec(y))| \leq \norm{x-y}$, for all $x=x^*, y=y^*$ in $A$, implies that $\tau$ is continuous. The maps $\mu \colon U(1) \times V' \to U(1)V' : (\lambda, x) \mapsto \lambda x$ and $q:U(1)V' \to q(V')$ (used in the previous proofs) are quotient maps. We define $t \colon U(1) \times V' \to U(1)V'$ via $(\lambda, x) \mapsto \tau(x)^{-1}x$. If $\lambda x = \lambda' x'$, then in the above proof we showed that $\theta(x) = \theta(x') + c$ with $e^{ic} = \lambda^*\lambda'$. It follows that $\tau(x) = \lambda^*\lambda'\tau(x')$. Therefore, $\tau(x)^{-1}x = \tau(x')^{-1}x'$. Hence, there is a (unique) continuous map $R \colon U(1)V' \to U(A)$ which extends $t$ along $\mu$. By construction, $R(ax) = R(bx)$ for all $a, b \in U(1)$ and all $x \in V'$.
    By quotient properties, we get a (unique) continuous map $q(V') \to U(A)$ that extends $R$ along $q$. This map coincides with $f$. \end{proof} The result follows because one can then define sections over neighborhoods of any element of $PU(A)$ by translating the section $f$.
\end{proof}

\begin{proof}[\textbf{Proof of \cref{big topological lemma for ua and pua}(4)}]
 Let $\psi, u$ be as in \cref{defn of ssa algebra}. Because $U(A) \to U(A \otimes A)$ is a group homomorphism, it suffices to show that this map induces isomorphisms $\pi_n(U(A), 1) \to \pi_n(U(A \otimes A), 1 \otimes 1)$ for all $n \geq 0$. Let $[\beta] \in \pi_n(U(A), 1)$ be represented by $\beta \colon (S^n, z_0) \to (U(A), 1)$. By compactness of $S^n$, there is  a $t \in [0, 1)$ such that
\[
    {\norm{\psi(\beta(z)) - u_t(\beta(z) \otimes 1)u_t^{\ast}} < \rho}
\]
for all $z \in S^n$. This implies that the spectrum of $\psi(\beta(z)) \cdot (u_t(\beta(z) \otimes 1)u_t^{\ast})^{*}$ is contained in $f([-\pi + \epsilon, \pi - \epsilon])$ for all $z \in S^n$. By functional calculus, we have a continuous map $r \colon S^n \to (A \otimes A)_{sa}$ with $r(z) = \theta(\psi(\beta(z)) \cdot (u_t(\beta(z) \otimes 1)u_t^{\ast})^{*})$. Note that $r(z_0) = 0$. Then we have
$\psi(\beta(z)) \cdot (u_t(\beta(z) \otimes 1)u_t^{\ast})^{*} = e^{ir(z)}$. So, $\psi(\beta(z)) = e^{ir(z)}u_t(\beta(z) \otimes 1)u_t^{\ast}$ for all $z \in S^n$. This gives a pointed homotopy: $(z, s) \mapsto e^{isr(z)}u_{st}(\beta(z) \otimes 1)u_{st}^{\ast}$
    from $\psi \circ \beta$ to $u_0 (\beta \otimes 1)u_0^{\ast}$. It follows that 
    $[\psi \circ \beta] = [u_0 (\beta \otimes 1) u_0^{\ast}]$ in $\pi_n(U(A \otimes A), 1 \otimes 1)$. Hence, $[\beta \otimes 1] = [u_0^{\ast}\psi(\beta)u_0]$ in $\pi_n(U(A \otimes A), 1 \otimes 1)$ for all $[\beta]$ in $\pi_n(U(A), 1)$. Thus, $u \mapsto u \otimes 1$ is a weak equivalence.
\end{proof}

\begin{proof}[\textbf{Proof of \cref{big topological lemma for ua and pua}(5)}]
By (3) the maps $U(A) \to PU(A)$ and $U(A \otimes A) \to PU(A \otimes A)$ are both principal $U(1)$-bundles. Noting that principal bundles are fibrations, the statement follows from (4) and the long exact sequence of homotopy groups applied to the commutative diagram
\[
    \begin{tikzcd}[/tikz/baseline=(tikz@f@2-2-1.base)]
        U(1) \ar[r] \arrow[equal]{d} & U(A) \ar[r] \ar[d,"u \mapsto u \otimes 1"] & PU(A) \ar[d,"{[u]} \mapsto {[u \otimes 1]}"] \\
        U(1) \ar[r] & U(A \otimes A) \ar[r] & PU(A \otimes A)
    \end{tikzcd} \qedhere
 \]
\end{proof} \noindent This completes the proof of \cref{big topological lemma for ua and pua} (5) and therefore the proof of the whole lemma.
\end{proof}

\begin{proof}[\textbf{Proof of \cref{weak equiv between pennig constr and B^2}}]
For a simplicial space $S$ let 
\[
    q \colon \coprod_{n \geq 0} S_n \times \Delta^{n} \to |S|
\] 
be the quotient map. Write $|S|^{n} := q(\coprod_{k \leq n} S_k\times \Delta^{k})$ and give this the quotient topology. First, let us note that the canonical map $S_0 \to |S|^{0}$ is a homeomorphism by \cite[Proposition~2.2(i)]{Schwede2026SpacesVsSimplicialSets}.
The main technical result is the following.

\begin{lemma*}
    Let $S$ be a Reedy cofibrant simplicial space. Then, $|S|$ is Hausdorff and $|S|^{0} \to |S|$ is a closed cofibration.
\end{lemma*}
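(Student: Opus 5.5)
Since $S$ is Reedy cofibrant and each $S_n$ is a Hausdorff $k$-space, I would show that $|S|$ is a sequential colimit of closed cofibrations starting at $|S|^0\cong S_0$.

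\textbf{Step 1: the skeleta as pushouts.} First I would establish, for each $n\geq 1$, the standard pushout square
\[
\begin{tikzcd}[cramped]
	{L_nS\times \Delta^n \cup S_n\times \partial\Delta^n} & {|S|^{n-1}} \\
	{S_n\times \Delta^n} & {|S|^n}
	\arrow[from=1-1, to=1-2]
	\arrow[from=1-1, to=2-1]
	\arrow[from=1-2, to=2-2]
	\arrow[from=2-1, to=2-2]
\end{tikzcd}
\]
in $\Top$, with the top arrow induced by the face and degeneracy relations. The derivation is the classical one: every point of $S_n\times\Delta^n$ either is nondegenerate and interior, or is identified with a point of lower filtration. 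I would use the Eilenberg--Zilber lemma to write each simplex uniquely as a degeneracy of a nondegenerate one, which shows that the only identifications at level $n$ occur on this subspace. I would then check that the quotient topology on $|S|^n$ agrees with the pushout topology, using that quotients commute with $k$-products by compact Hausdorff spaces.

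\textbf{Step 2: each stage is a closed cofibration.} By Reedy cofibrancy, $L_nS\hookrightarrow S_n$ is a closed cofibration, and $\partial\Delta^n\hookrightarrow\Delta^n$ is a closed cofibration between compact Hausdorff spaces. The pushout-product lemma for closed cofibrations (Str\o m's product theorem) then shows that the left vertical map is a closed cofibration. Closed cofibrations are stable under pushout, so $|S|^{n-1}\to|S|^n$ is a closed cofibration.

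\textbf{Step 3: passing to the colimit.} Next I would identify $|S|$ with $\operatorname{colim}_n|S|^n$; this holds because a set is closed in $|S|$ iff its preimage in each $S_k\times\Delta^k$ is closed. A sequential composite of closed cofibrations is a closed cofibration, so $|S|^0\to|S|$ is one.

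\textbf{Step 4: Hausdorffness.} I would argue inductively that each $|S|^n$ is Hausdorff. A pushout of Hausdorff $k$-spaces along a closed cofibration is again Hausdorff: a closed cofibration $A\to X$ admits a neighbourhood deformation retraction pair $(u,h)$ with $A=u^{-1}(0)$, and this can be used to separate points as in \cite{Steenrod1967}. The spaces involved are Hausdorff $k$-spaces by hypothesis. Finally, the colimit of a sequence of closed inclusions of Hausdorff spaces along closed cofibrations is Hausdorff: since each inclusion is a closed cofibration, separating open sets can be extended stage by stage.

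I expect the main obstacle to be the point-set part of Step 1, namely verifying that the quotient topology agrees with the pushout topology in the $k$-category. I would handle it by working with the fat filtration first and then collapsing degeneracies, relying on the closedness of the latching inclusions.
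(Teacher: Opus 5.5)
Your proposal is correct and follows essentially the same route as the paper. Both arguments express the skeleta $|S|^{n-1}\to|S|^n$ as pushouts along the latching/boundary inclusion, which is a closed cofibration by Reedy cofibrancy. Both then get Hausdorffness of each $|S|^n$ by induction from $|S|^0\cong S_0$, and pass to the sequential colimit. The only difference is that you sketch the point-set facts (Str\o m's product theorem, Hausdorffness of adjunction spaces along NDR pairs, colimits of NDR sequences) that the paper simply cites from Steenrod.
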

\begin{proof}
We know that the inclusion of the latching object $L_{\ast}S \to S$ is a pointwise closed cofibration (see \cref{reedy cofib}). Recall that $|S|$ is the colimit of the $\{|S|^{n}\}_{n \geq 0}$ along the natural inclusions $|S|^{n} \to |S|^{n+1}$ and that there are pushouts 
\[
\begin{tikzcd}
	{L_{n+1}S \times \Delta^{n+1} \cup S_{n+1} \times \partial\Delta^{n+1}} & {|S|^{n}} \\
	{S_{n+1} \times \Delta^{n+1}} & {|S|^{n+1}}
	\arrow[from=1-1, to=1-2]
	\arrow[from=1-1, to=2-1]
	\arrow[from=1-2, to=2-2]
	\arrow[from=2-1, to=2-2]
\end{tikzcd} 
\]  
As $S$ is Reedy cofibrant, left vertical map is a closed cofibration. It follows that $|S|^{n} \to |S|^{n+1}$ is a closed cofibration because pushouts preserve closed cofibrations. 

By induction on $n$ (with $|S|^{0} \cong S_0$) and using the above pushout squares, it follows that each $|S|^{n}$ is Hausdorff. By \cite[Proposition~2.6]{Steenrod1967}, $|S|^n$ is a $k$-space. Combining \cite[Theorems~7.1,9.4 and Lemma 9.2]{Steenrod1967} shows that $|S|^{n} \to |S|$ is a closed cofibration for all $n \geq 0$ and that $|S|$ is a Hausdorff $k$-space.
\end{proof}

We know that $\cB\cG$ is a topological group from  \cref{crossed module cat gives top group after realization}. 

Recall from \cref{lem:nerve_of_crossed_mod_and_bar_construction} that $N\cG$ is naturally isomorphic to $B_*(F, C_H, *_H)$. It is easily checked using \cite[Theorem~5.4.6]{tomDieck2008AlgebraicTopology} that $B_*(F, C_H, *_H)$ is a Reedy cofibrant simplicial space. The above lemma shows that $|B_\ast(F, C_H, *_H)|^{0} \to |B_*(F, C_H, \ast_H)|$ is a closed cofibration and that $|B_\ast(F, C_H, *_H)|$ is Hausdorff. From the homeomorphisms $G \cong B_0(F, C_H, \ast_H) \cong  |B_\ast(F, C_H, \ast_H)|^{0}$ and the well-pointedness of $G$, it follows that  $\cB\cG \cong |B_\ast(F, C_H, \ast_H)|$ is well-pointed. This proves the first part.

The second part follows by applying the above lemma to the nerve of the well-pointed Hausdorff $k$-group $\cB\cG$ (to see that this is Reedy cofibrant, use \cite[Theorem~5.4.6]{tomDieck2008AlgebraicTopology} again).

The third part follows by the Reedy cofibrancy of $N\cB\cG$ and by \cref{fat thin weak equiv for reedy cofib}.
\end{proof}

\AtNextBibliography{\small}
\printbibliography

\end{document}